\documentclass[10pt]{article}

\usepackage{amsmath,amssymb,amsthm,graphicx,mathrsfs,url}
\usepackage[usenames,dvipsnames]{color}

\definecolor{darkred}{rgb}{0.4,0.1,0.1}
\usepackage[colorlinks=true,linkcolor=darkred,citecolor=Blue]{hyperref}

\newtheorem{theorem}{Theorem}[section]         
\newtheorem{lemma}[theorem]{Lemma}
\newtheorem{corollary}[theorem]{Corollary}

\theoremstyle{definition}
\newtheorem{definition}[theorem]{Definition}

\newtheorem{remark}[theorem]{Remark}

\numberwithin{equation}{section}



\newcommand{\paragraf}{\textsection}
\renewcommand{\emptyset}{\varnothing}

\newcommand{\R}{\ensuremath{\mathbb R}}    
\newcommand{\C}{\ensuremath{\mathbb C}}    
\newcommand{\N}{\ensuremath{\mathbb N}}    

\newcommand{\gperp}{{[\perp]}}
\newcommand{\iso}{\circ}
\newcommand{\product}{[\cdot\,,\cdot]}
\newcommand{\hproduct}{(\cdot\,,\cdot)}


\newcommand{\calD}{\mathcal D}

\newcommand{\calG}{\mathcal G}
\newcommand{\calH}{\mathcal H}

\newcommand{\calK}{\mathcal K}
\newcommand{\calL}{\mathcal L}
\newcommand{\calM}{\mathcal M}
\newcommand{\calN}{\mathcal N}

\newcommand{\calP}{\mathcal P}

\newcommand{\calU}{\mathcal U}
\newcommand{\calV}{\mathcal V}
\newcommand{\calW}{\mathcal W}

\newcommand{\calZ}{\mathcal Z}

\newcommand{\veps}{\varepsilon}

\newcommand{\la}{\lambda}


\renewcommand{\Im}{\operatorname{Im}}

\newcommand{\linspan}{\operatorname{span}}
\renewcommand{\ker}{\operatorname{ker}}
\newcommand{\ran}{\operatorname{ran}}
\newcommand{\dom}{\operatorname{dom}}
\newcommand{\codim}{\operatorname{codim}}

\newcommand{\sap}{\sigma_{{ap}}}
\newcommand{\esap}{\wt\sigma_{{ap}}}

\newcommand{\spp}{\sigma_+}
\newcommand{\smm}{\sigma_-}


\newcommand{\Lra}{\Longrightarrow}
\newcommand{\Sra}{\Rightarrow}

\newcommand{\Llra}{\Longleftrightarrow}

\newcommand{\wto}{\rightharpoonup}
\newcommand{\Ato}{\stackrel{A}{\to}}
\newcommand{\Awto}{\stackrel{A}{\rightharpoonup}}

\newcommand{\ol}{\overline}
\newcommand{\ds}{\dotplus}
\newcommand{\wt}{\widetilde}

\renewcommand{\r}{\operatorname{r}}
\newcommand{\es}{\wt\sigma}
\newcommand{\er}{\wt\r}
\newcommand{\spip}{\sigma_{\pi_+}}
\newcommand{\spim}{\sigma_{\pi_-}}


\begin{document}
\title{Spectral Points of Type $\pi_+$ and Type $\pi_-$ of Closed Operators in Indefinite Inner Product Spaces}

\author{Friedrich Philipp and Carsten Trunk}

\maketitle

\begin{abstract}
We introduce the notion of spec\-tral poi\-nts of type $\pi_+$ and type $\pi_-$ of closed operators $A$ in a Hilbert space which is equipped with an indefinite inner product. It is shown that these points are stable under compact perturbations. In the second part of the paper we assume that $A$ is symmetric with respect to the indefinite inner product and prove that the growth of the resolvent of $A$ is of finite order in a neighborhood of a real spectral point of type $\pi_+$ or $\pi_-$ which is not in the interior of the spectrum of $A$. Finally, we prove that there exists a local spectral function on intervals of type $\pi_+$ or $\pi_-$.
\end{abstract}

\bigskip
\noindent\textit{Keywords}: Indefinite inner product; selfadjoint operator; spectrum of positive and negative type; spectrum of type $\pi_+$ and $\pi_-$; local spectral function; perturbation theory

\noindent\textit{MSC 2010}: 47A10, 47B50, 46C20, 47A55

\section{Introduction}
Let $(\calH,\product)$ be a Krein space and let $A$ be a bounded
or unbounded linear operator in $\calH$ which is selfadjoint with
respect to the Krein space inner product $[\cdot,\cdot]$. The
spectral properties of selfadjoint operators in Krein spaces differ
essentially from the spectral properties of selfadjoint operators in
Hilbert spaces, e.g., the spectrum $\sigma(A)$ of $A$ is in general
not real and even $\sigma(A)=\mathbb C$ may happen.

The indefiniteness of the scalar product $\product$ on $\calH$ induces a natural
classification of isolated real eigenvalues: A real isolated eigenvalue $\lambda_0$
 is said to be of {\em positive} ({\em negative})
 {\em type} if all corresponding eigenvectors are positive (negative, respectively)
 with respect to  $\product$. In this case,
there is no Jordan chain of length greater than one.
This classification of real isolated eigenvalues is  used frequently in some
papers from theoretical physics,
see, e.g., \cite{BBJ1,BBJ2,CGS05,GSZ,LT04}.

There is a corresponding notion for points from the approximate point spectrum $\sigma_{ap}(A)$. Namely,
a point $\lambda \in \sigma_{ap}(A)$ is called a
spectral point of {\em positive\/} {\rm (}{\em negative\/}{\rm )}
 {\em type of $A$\/} if for every approximate
eigensequence
$(x_{n})$  of $A$ at $\lambda$ we have
\begin{equation}\label{e:wieimmer}
\liminf_{n \to \infty}\, [x_{n},x_{n}] >0 \;\;\;\;
 \bigl(\,resp.\,\, \limsup_{n \to \infty}\, [x_{n},x_{n}] <0 \bigr).
\end{equation}
The above definitions also make sense when the underlying inner product
is no longer a Krein space inner product.
In \cite{lmm}
bounded operators in a Hilbert space $(\calH,\hproduct)$
are considered which are selfadjoint with respect to an  inner product $\product = (G\cdot,\cdot)$ with a selfadjoint bounded  operator $G$.
Note that in this case the point zero is allowed to be a point of the spectrum of $G$,
which corresponds to the situation where  $(\calH,\hproduct)$ is not a Krein space.
 In \cite{lmm} it is shown that the sets of the spectral points
 of positive and negative type are contained in $\R$.
 If, in addition, the non-real points of a neighbourhood of  spectral points of
 positive/negative type are contained in $\rho(A)$, then
  there exists a local spectral function $E$, see  \cite{lmm}.
 The second main result in \cite{lmm} is for a compact
 and $\product$-selfadjoint perturbation $K$: A spectral point of positive type
 which is not in the interior of $\sigma(A)$ and of $\sigma(A + K)$
  is either a spectral point of positive type or a regular point of $A + K$ or it is contained in $\sigma_{-,f}(A + K)$, see \cite{lmm}.

In \cite{ajt} the notions of spectral points of positive/negative type are generalized
 to spectral points of type $\pi_+$ and type $\pi_-$. These points are also introduced via approximate eigensequences, and the relation \eqref{e:wieimmer} is only required
for sequences $(x_n)$ in a subspace of finite codimension.
In \cite{ajt} the operator $A$ is allowed to be unbounded,
 but  $\product$ is still a Krein space inner product.
  One of the main results in \cite{ajt} is that the above-mentioned set $\sigma_{-,f}(A)$ essentially coincides with the set of the spectral points of type $\pi_+$ which are not of positive type.
   Moreover, a local spectral function similar as above is constructed.
   However, the proof relies on the  Krein space structure.
This paper is in a sense  continued by \cite{bpt}.
Moreover, in \cite{abjt}, the stability results from \cite{ajt} and \cite{lmm} were generalized to closed linear relations in Krein spaces and were used in, e.g., \cite{adp,BP,JT,JTW,PST}

In the present paper we drop the condition that $(\calH,\product)$ is a Krein space
 and (contrary to \cite{lmm}) allow the operator $A$ to be unbounded.
Some of the known results from \cite{ajt} and \cite{bpt}
 still hold in this much more general situation. They are
 collected in Section \ref{ss:first}. In addition, it is shown in
 Section \ref{ss:first} that $\ker (A-\lambda)$ is an Almost
 Pontryagin space for all complex numbers $\lambda$ from the spectrum
 of type $\pi_+$ or $\pi_-$. Moreover it is  shown that the spectral points
  of type $\pi$ are stable under compact perturbations.
   In Section \ref{binkrank1}
 it is proved in Theorem \ref{t:big_one}
  that a compact interval of type $\pi_+$ or $\pi_-$ is always contained in an open set $\calU$ such that $\calU$ either consists only of eigenvalues of $A$ or $\calU\setminus\R$ consists only of points outside $\sap(A)$.
  Here we also show that in this situation either each point
  of $\calU$ possesses a Jordan chain of infinite length
  or that there exists at most
  finitely many points in $\calU$ with a Jordan chain of length greater
  than one which has, in addition, a finite length.
  In Subsection \ref{ss:frp}, a finite rank perturbation is constructed which turns a spectral point of type $\pi$ into a spectral point of definite type.
  If  $\calU\setminus\R\subset \rho(A)$ then  the growth of the resolvent towards the interval is of finite order (see Theorem \ref{t:resolvente}).
Finally, we prove in Section \ref{binkrank2} that the operator $A$ possesses a local spectral function on intervals of  type $\pi_+$ or type $\pi_-$.

\section{Preliminaries}
In this paper let $(\calH,\hproduct)$ be a Hilbert space and let $G$ be a bounded selfadjoint operator in $\calH$. By $\product$ we denote the inner product which is induced by $G$, i.e.
\begin{equation}\label{Geraberg}
[x,y] := (Gx,y) \;\text{ for }\; x,y\in\calH.
\end{equation}
The operator $G$ is called the {\it Gram operator} of the inner product $\product$ in the Hilbert space $(\calH,\hproduct)$.

A vector $x\in\calH$ is called {\it positive} ({\it negative}, {\it neutral}\,) if $[x,x] > 0$ ($[x,x] < 0$, $[x,x]=0$, respectively). A subset is called {\it positive} ({\it negative},
{\it neutral}\,) if all its non-zero vectors are positive (negative, neutral, respectively). As usual (see e.g.\ \cite{ai,b}), the {\it orthogonal companion} $\calM^\gperp$ and
the {\it isotropic part} $\calM^\iso$ of a subset $\calM$ are defined by
$$
\calM^\gperp := \{x\in\calH : [x,y] = 0
\;\text{ for all }\; y\in\calM\}\quad\text{and}\quad\calM^\iso := \calM\cap\calM^\gperp.
$$
In this paper a subspace is always a closed linear manifold.
Let $\calL\subset\calH$ be a subspace. A {\it fundamental decomposition of $\calL$} is a decomposition of the type
\begin{equation}\label{e:fd}
\calL = \calL_+ [\ds] \calL_- [\ds] \calL^\iso,
\end{equation}
where $\calL_+$ is a positive subspace, $\calL_-$ is a negative subspace,
 and the projections in $\calL$ onto $\calL_+$, $\calL_-$ and $\calL^\iso$,
 which are defined by this decomposition, are bounded operators. The symbol
  $[\ds]$ indicates that the sum is direct and orthogonal with respect to the
   inner product $\product$. Recall, that a subspace $\calL\subset\calH$
    always admits a fundamental decomposition \eqref{e:fd}
    (see \cite[Theorem IV.5.2]{b}). The numbers
$$
\kappa_+(\calL) := \dim\calL_+,\quad\kappa_-(\calL) := \dim\calL_-,
\quad\text{and }\kappa_0(\calL) := \dim\calL^\iso
$$
will be called the {\it rank of positivity}, {\it rank of negativity} and the {\it rank of degeneracy} of $\calL$, respectively. They do not depend on the particularly chosen fundamental decomposition. Furthermore, we define
$$
\kappa_{+,0}(\calL) := \kappa_+(\calL) + \kappa_0(\calL)
\quad\text{and}\quad
\kappa_{-,0}(\calL) := \kappa_-(\calL) + \kappa_0(\calL),
$$
and call these values the {\it rank of non-negativity} and the {\it rank of non-positivity} of $\calL$, respectively.
A subset $\calL$ is called {\it uniformly positive}
({\it uniformly negative}) if there exists a number $\delta > 0$ such that
$$
[x,x] \ge \delta\|x\|^2 \;\text{ for all }\; x\in\calL
\qquad
(-[x,x]\ge\delta\|x\|^2 \;\text{ for all }\; x\in\calL,\text{ respectively}).
$$
A subset is called {\it uniformly definite} if it is either uniformly positive or uniformly negative. Recall, that for a uniformly definite subspace $\calL\subset\calH$ we have
(see, e.g., \cite{lmm})
\begin{equation}\label{Elgersburg}
\calH = \calL[\ds]\calL^\gperp.
\end{equation}

Let $A$ be a closed and densely defined operator in $\calH$.
We denote the spectrum and the resolvent set by $\sigma(A)$ and $\rho(A)$, respectively.
By $\ker(A)$ we denote the kernel and by $\ran(A)$ the range of $A$.
 We call $A$ a $\Phi_+$-operator if $\ker(A)$ is finite-dimensional and $\ran(A)$ is closed.
 Recall (see, e.g., \cite[Theorem 8 in Section 16]{m}) that  $A$ is a
 $\Phi_+$-operator if and only if there exist a subspace $\calM\subset\calH$ with
 $\codim\,\calM < \infty$ and a number $c > 0$ such that
\begin{equation}\label{delta.delta}
\|Ax\| \ge c\|x\| \;\text{ for all }\; x\in\calM\cap\dom A.
\end{equation}

 The {\it approximate point spectrum $\sap(A)$ of $A$} is the set
  of all points $\lambda\in\C$ for which there exists a sequence $(x_n)$ in $\dom A$ with the property
$$
\|x_n\| = 1, \;n\in\N, \quad\text{and}\quad (A - \lambda)x_n\to 0 \;\text{ as }n\to\infty.
$$
A point $\lambda\in\sap(A)$ is called an {\it approximate eigenvalue of $A$}. If $\lambda\in\C$ is not an approximate eigenvalue of $A$, it is called a {\it point of regular type of $A$}. We denote the set of those points by $\r(A)$. It is not difficult to see that $\lambda\in\C$ is a point of regular type of $A$ if and only if $\ker(A - \lambda) = \{0\}$ and $\ran(A - \lambda)$ is closed. In particular, if $\lambda\in\r(A)$, then $A - \lambda$ is a $\Phi_+$-operator.

As usual, the compactification
$\mathbb C \cup \{\infty\}$
of $\mathbb C$ is denoted  by $\overline{\mathbb C}$.
We define $\esap(A):=\sap(A)$ if $A$ is bounded, and $\esap(A):= \sap(A)\cup\{\infty\}$ if $A$ is unbounded and call the set $\esap(A)$ the {\it extended approximate point spectrum of $A$}. The {\it extended spectrum $\es(A)$ of $A$} is defined analogously. The complementary sets
$$
\wt\rho(A) := \ol\C\setminus\es(A)
\quad\text{and}\quad
\er(A) := \ol\C\setminus\esap(A)
$$
are called the {\it extended resolvent set} and the {\it extended set of points of regular type of $A$}, respectively. Obviously, $\sap(A)\subset\sigma(A)$ and $\esap(A)\subset\es(A)$. Moreover, we have
\begin{equation}\label{e:partial}
\partial\sigma(A)\subset\sap(A)
\quad\text{and}\quad
\partial\es(A)\subset\esap(A).
\end{equation}
A point $\lambda\in\ol\C$ is contained in $\er(A)$ if and only if there exist an open neighborhood $\calU$ of $\lambda$ in $\C$ and a number $c>0$ such that
\begin{equation}\label{e:rA}
\|(A - \mu)x\| \ge c\|x\| \;\text{ for all }\;
\mu\in\calU\setminus\{\infty\} \;\text{ and all }\;
x\in\dom A.
\end{equation}
Thus, $\er(A)$ and $\r(A)$ are open in $\ol\C$ and $\C$, respectively.

For a linear manifold $\calL\subset\calH$ the {\it codimension of $\calL$} is defined by $\codim\calL := \dim(\calH/\calL)$. If $\calM\subset\calH$ is another linear manifold
 such that $\calL\subset\calM$ we define $\codim_\calM\calL := \dim(\calM/\calL)$.

\section{Spectral Points of Type \texorpdfstring{$\pi_+$}{pi+} and Type \texorpdfstring{$\pi_-$}{pi-}}\label{ss:first}
Throughout this section, let $A$ be a closed, densely defined operator in $\calH$.
We define the spectral points of type $\pi_+$ and type $\pi_-$ of $A$ in analogy to
\cite{abjt,ajt,bpt}.
 However, we emphasize that here neither $(\calH,\product)$ is
 assumed to be a Krein space
 (as in \cite{abjt,ajt,bpt})
 nor is the operator $A$ assumed to be selfadjoint (as in \cite{ajt,bpt}).
The following definition  is a generalization of the spectral
points of definite type (see, e.g., \cite{lmm,p}).

\begin{definition}\label{d:spip}
Let $A$ be a closed, densely defined operator in $\calH$.
A point $\lambda\in\sap(A)$ is called a {\it spectral point of type $\pi_+$} ({\it type $\pi_-$}) {\it of $A$} if there exists a linear manifold $\calH_\lambda\subset\calH$ with $\codim\,\calH_\lambda < \infty$, such that for every sequence $(x_n)$ in $\calH_\lambda\cap\dom A$ with $\|x_n\|=1$ and $(A - \lambda)x_n\to 0$ as $n\to\infty$ we have
$$
\liminf_{n\to\infty}\,[x_n,x_n] > 0 \quad
\left(\limsup_{n\to\infty}\,[x_n,x_n] < 0,\text{ respectively}\right).
$$
The point $\infty$ is called a {\it spectral point of type $\pi_+$} ({\it type $\pi_-$}) {\it of $A$} if $A$ is unbounded and if there exists a linear manifold $\calH_\lambda\subset\calH$ with $\codim\calH_\lambda < \infty$, such that for every sequence $(x_n)$ in $\calH_\lambda\cap\dom A$ with $\|Ax_n\| = 1$ and $x_n\to 0$ as $n\to\infty$ we have
$$
\liminf_{n\to\infty}\,[Ax_n,Ax_n] > 0 \quad
\left(\limsup_{n\to\infty}\,[Ax_n,Ax_n] < 0,\text{ respectively}\right).
$$
We denote the set of all spectral points of type $\pi_+$ (type $\pi_-$) of $A$ by $\spip(A)$ ($\spim(A)$, respectively).

A point $\lambda\in\spip(A)$ ($\lambda\in\spim(A)$) is called a {\it spectral point of positive type} ({\it negative type}, respectively) {\it of $A$} if $\calH_\lambda$ in the above definition can be chosen as $\calH$. The set consisting of all spectral points of positive (negative) type of $A$ is denoted by $\spp(A)$ ($\smm(A)$, respectively).
\end{definition}

\begin{remark} Contrary to the notion above, in \cite{abjt,ajt,bpt}
the notion $\sigma_{++}(A)$ and $\sigma_{--}(A)$
is used for spectral points of positive (negative) type of $A$. However, here we
will use the notion $\spp(A)$ ($\smm(A)$, respectively) as in \cite{lmm}.
\end{remark}

\begin{remark}\label{r:PhiPlus}
If $\lambda\in\C$ then $A - \lambda$ is a $\Phi_+$-operator if and only if $\lambda\in(\spip(A)\cap\spim(A))\cup\r(A)$. Indeed, if $A - \lambda$ is a $\Phi_+$-operator, then there is a subspace $\calH_\lambda$ with finite codimension such that there does not
exist any sequence $(x_n)$ in $\calH_\lambda\cap\dom A$ with $\|x_n\|=1$ and $(A - \lambda)x_n\to 0$ as $n\to\infty$. The opposite direction follows directly from Definition \ref{d:spip}.
\end{remark}

In the sequel, by $\calH_A$ we denote the Hilbert space $(\dom A,\hproduct_A)$, where
$$
(x,y)_A := (x,y) + (Ax,Ay), \quad x,y\in\dom A.
$$
The graph norm on $\calH_A$ induced by $\hproduct_A$ is denoted by $\|\cdot\|_A$, i.e.\
\begin{equation}\label{PoeHoe}
\|x\|_A := \sqrt{\|x\|^2 + \|Ax\|^2}, \quad x\in\dom A.
\end{equation}
For $M\subset\calH_A$ we denote the closure of $M$ in $\calH_A$ by $\ol{M}^A$. If $(x_n)$ is a sequence in $\calH_A$ converging (weakly) to some $x\in\calH_A$, we write $x_n\Ato x$ ($x_n\Awto x$, respectively), $n\to\infty$.
In the following theorem we collect different characterizations for
a point to belong to $\spip(A)$ {\rm (}or to $\lambda\in\spim(A)${\rm )},
see also \cite{abjt,ajt}.

\begin{theorem}\label{t:closed subspace}
Let $A$ be a closed, densely defined operator in $\calH$ and let
 $\lambda\in\esap(A)$. Then the following statements are equivalent.
\begin{enumerate}
\item[{\rm (i)}] $\lambda\in\spip(A)$ {\rm (}$\lambda\in\spim(A)${\rm )}.
\item[{\rm (ii)}] There exists a linear manifold $\calD_\lambda\subset\dom A$ with
$\codim_{\dom A}\calD_\lambda < \infty$, such that for every sequence
$(x_n)$ in $\calD_\lambda$ we have: If $\lambda\neq\infty$, then
\begin{equation}\label{e:ae}
\|x_n\|=1 \quad\text{and}\quad (A - \lambda)x_n\to 0 \;\text{ as }n\to\infty
\end{equation}
implies
\begin{equation}\label{e:liminf}
\liminf_{n\to\infty}\,[x_n,x_n] > 0 \quad
\left(\limsup_{n\to\infty}\,[x_n,x_n] < 0,\text{ respectively}\right).
\end{equation}
If $\lambda=\infty$, then
\begin{equation}\label{e:aei}
\|Ax_n\| = 1 \quad\text{and}\quad x_n\to 0 \;\text{ as }n\to\infty
\end{equation}
implies
\begin{equation}\label{e:liminfi}
\liminf_{n\to\infty}\,[Ax_n,Ax_n] > 0 \quad
\left(\limsup_{n\to\infty}\,[Ax_n,Ax_n] < 0\text{ respectively}\right).
\end{equation}
\item[{\rm (iii)}] There exists a linear manifold
 $\wt\calD_\lambda\subset\dom A$ with $\codim_{\dom A}\wt\calD_\lambda < \infty$ which is closed in $\calH_A$ such that for every sequence $(x_n)$ in $\wt\calD_\lambda$ we have: If $\lambda\neq\infty$, then {\rm \eqref{e:ae}} implies {\rm\eqref{e:liminf}}. If $\lambda=\infty$, {\rm\eqref{e:aei}} implies {\rm\eqref{e:liminfi}}.
\item[{\rm (iv)}] There exists a subspace\footnote{Recall
 that here a subspace is always a closed linear (sub)manifold}
 $\calH_\lambda\subset\calH$ with $\codim\calH_\lambda < \infty$ such that for every sequence $(x_n)$ in $\calH_\lambda\cap\dom A$ we have: If $\lambda\neq\infty$, then {\rm\eqref{e:ae}} implies {\rm\eqref{e:liminf}}. If $\lambda=\infty$, {\rm\eqref{e:aei}} implies {\rm\eqref{e:liminfi}}.
\item[{\rm (v)}] If $\lambda\neq\infty$, then for every sequence $(x_n)$ in $\dom A$ with $x_n\wto 0$ as $n\to\infty$ {\rm\eqref{e:ae}} implies {\rm\eqref{e:liminf}}. If $\lambda=\infty$, then for every sequence $(x_n)$ in $\dom A$ with $Ax_n\wto 0$ as $n\to\infty$ {\rm\eqref{e:aei}} implies {\rm\eqref{e:liminfi}}.
\end{enumerate}
\end{theorem}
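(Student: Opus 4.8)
Recall that $\lambda\in\esap(A)$ is part of the hypothesis, so that statement~(i) is equivalent (via Definition~\ref{d:spip}) to the mere existence of the linear manifold $\calH_\lambda$ described there; in particular $\text{(iv)}\Rightarrow\text{(i)}$ is trivial, since a subspace is a linear manifold. The plan is thus to prove the cycle
\[
\text{(i)}\ \Longrightarrow\ \text{(ii)}\ \Longrightarrow\ \text{(iii)}\ \Longrightarrow\ \text{(v)}\ \Longrightarrow\ \text{(iv)}\ \Longrightarrow\ \text{(i)} .
\]
I would treat the case $\lambda\neq\infty$ in detail; the case $\lambda=\infty$ follows from the same arguments upon replacing $x_n$, \eqref{e:ae} and $[x_n,x_n]$ by $Ax_n$, \eqref{e:aei} and $[Ax_n,Ax_n]$. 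Note that an admissible sequence is in either case bounded in $\calH_A$: one has $\|Ax_n\|\le|\lambda|\,\|x_n\|+\|(A-\lambda)x_n\|$ when $\lambda\neq\infty$, and $\|x_n\|_A^2=\|x_n\|^2+1$ when $\lambda=\infty$.

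The implications $\text{(i)}\Rightarrow\text{(ii)}$ and $\text{(ii)}\Rightarrow\text{(iii)}$ are reductions. For the first, set $\calD_\lambda:=\calH_\lambda\cap\dom A$; the inclusion $\dom A\hookrightarrow\calH$ induces an injection $\dom A/\calD_\lambda\hookrightarrow\calH/\calH_\lambda$, so $\codim_{\dom A}\calD_\lambda\le\codim\calH_\lambda<\infty$, while the implication \eqref{e:ae}$\Rightarrow$\eqref{e:liminf} for sequences in $\calD_\lambda\subset\calH_\lambda\cap\dom A$ is immediate from Definition~\ref{d:spip}. For the second, take $\wt\calD_\lambda:=\ol{\calD_\lambda}^A$, which is closed in $\calH_A$ by construction and has $\codim_{\dom A}\wt\calD_\lambda\le\codim_{\dom A}\calD_\lambda<\infty$. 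If $(x_n)\subset\wt\calD_\lambda$ satisfies \eqref{e:ae}, pick $y_n\in\calD_\lambda$ with $\|x_n-y_n\|_A<1/n$; then $\|y_n\|\to1$ and $(A-\lambda)y_n\to0$, so (ii) applied to $y_n/\|y_n\|$ gives $\liminf[y_n,y_n]>0$, and since $\bigl|[x_n,x_n]-[y_n,y_n]\bigr|\le\|G\|\,\|x_n-y_n\|\,(\|x_n\|+\|y_n\|)\to0$, \eqref{e:liminf} follows.

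For $\text{(iii)}\Rightarrow\text{(v)}$ the key point is that weak convergence passes from $\calH$ to $\calH_A$ along admissible sequences: if $x_n\wto0$ in $\calH$ and $(A-\lambda)x_n\to0$, then for every $y\in\dom A$, $(x_n,y)_A=(x_n,y)+\bigl((A-\lambda)x_n,Ay\bigr)+\lambda(x_n,Ay)\to0$, that is, $x_n\Awto0$. Write $\calH_A=\wt\calD_\lambda\oplus F$ with $F$ finite-dimensional, and let $Q$ be the $\calH_A$-orthogonal projection onto $F$; then $Qx_n\Awto0$, hence $\|Qx_n\|_A\to0$ because $F$ is finite-dimensional. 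Consequently $d_n:=x_n-Qx_n\in\wt\calD_\lambda$ satisfies $\|d_n\|\to1$ and $(A-\lambda)d_n\to0$, so (iii) applied to $d_n/\|d_n\|$ together with $[x_n,x_n]-[d_n,d_n]\to0$ gives $\liminf[x_n,x_n]>0$.

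The main obstacle — and the step that genuinely accounts for the equivalence of the weak-convergence formulation~(v) — is $\text{(v)}\Rightarrow\text{(iv)}$, which I would prove by contradiction through a diagonal argument. Fix an orthonormal basis $(e_j)_{j\in\N}$ of $\calH$ and set $\calM_k:=\ol{\linspan\{e_j:j>k\}}$, a closed subspace of codimension $k$. If (iv) failed, then for each $k$ there would exist a sequence $(x_n^{(k)})_n$ in $\calM_k\cap\dom A$ with $\|x_n^{(k)}\|=1$, $(A-\lambda)x_n^{(k)}\to0$ as $n\to\infty$, and $\liminf_n[x_n^{(k)},x_n^{(k)}]\le0$; the latter forces infinitely many indices $n$ with $[x_n^{(k)},x_n^{(k)}]<1/k$, so one can choose $n_k$ for which $z_k:=x_{n_k}^{(k)}$ satisfies $\|z_k\|=1$, $\|(A-\lambda)z_k\|<1/k$ and $[z_k,z_k]<1/k$. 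Then $(z_k)\subset\dom A$ has $\|z_k\|=1$, $(A-\lambda)z_k\to0$ and $\limsup_k[z_k,z_k]\le0$, while $z_k\in\calM_k$ forces $(z_k,e_j)=0$ for all $k\ge j$ and therefore $z_k\wto0$; thus $(z_k)$ contradicts~(v). (For $\lambda=\infty$ one obtains $z_k\to0$, $\|Az_k\|=1$, $[Az_k,Az_k]<1/k$, and after passing to a subsequence along which $Az_k$ converges weakly — necessarily to $0$, by weak closedness of the graph of $A$ — one again contradicts~(v).) Beyond this diagonal step, the only care needed is routine: checking at each reduction that the modified sequences remain admissible and stay in the prescribed manifold, and the $\liminf$-bookkeeping, which everywhere reduces to the elementary facts that, for bounded real sequences, $b_n-c_n\to0$ implies $\liminf b_n=\liminf c_n$ and $a_n\to1$ implies $\liminf(a_nb_n)=\liminf b_n$.
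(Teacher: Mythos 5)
Your proof is essentially correct and reorganizes the implications into a single cycle
$\text{(i)}\Rightarrow\text{(ii)}\Rightarrow\text{(iii)}\Rightarrow\text{(v)}\Rightarrow\text{(iv)}\Rightarrow\text{(i)}$,
whereas the paper establishes $\text{(i)}\Rightarrow\text{(ii)}\Rightarrow\text{(iii)}\Rightarrow\text{(iv)}\Rightarrow\text{(i)}$ and, separately, $\text{(iv)}\Leftrightarrow\text{(v)}$ (it deduces both $\neg\text{(iii)}$ and $\neg\text{(v)}$ from $\neg\text{(iv)}$, and then proves $\text{(iv)}\Rightarrow\text{(v)}$). Your direct argument for $\text{(iii)}\Rightarrow\text{(v)}$ is the genuinely new step: you observe that a sequence satisfying \eqref{e:ae} which is weakly null in $\calH$ is automatically weakly null in $\calH_A$, then project onto a finite-dimensional complement of $\wt\calD_\lambda$ in $\calH_A$ and use that weak and strong convergence coincide there. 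This is sound, but to make $I-Q$ land in $\wt\calD_\lambda$ you must take $F$ to be the $\calH_A$-orthogonal complement of $\wt\calD_\lambda$ (finite-dimensional because $\wt\calD_\lambda$ is closed in $\calH_A$ with finite codimension), rather than an arbitrary algebraic complement; the phrasing ``$\calH_A=\wt\calD_\lambda\oplus F$ $\ldots$ the $\calH_A$-orthogonal projection onto $F$'' is only consistent with that specific choice of $F$.

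There is one gap in $\text{(v)}\Rightarrow\text{(iv)}$: fixing a countable orthonormal basis $(e_j)_{j\in\N}$ of $\calH$ presupposes that $\calH$ is separable, an assumption the paper does not make. If $(e_j)$ is merely an orthonormal sequence, the tails $\calM_k:=\{e_1,\dots,e_k\}^\perp$ still have codimension $k$, but $z_k\in\calM_k$ no longer forces $z_k\wto 0$, since vectors orthogonal to all $e_j$ need not be annihilated in the limit. The remedy is to run the diagonal construction adaptively, as the paper does: choose $x_1$ from a bad sequence for $\calM=\calH$, then at step $n$ pick $x_n$ from a bad sequence for $\calM=\{x_1,\dots,x_{n-1}\}^\perp$ with $\|(A-\lambda)x_n\|\le 1/n$ and $[x_n,x_n]\le 1/n$; the resulting sequence is orthonormal, hence weakly null in any Hilbert space. (For $\lambda=\infty$ the issue does not arise, since the needed weak limit $Az_k\wto 0$ comes from the weak closedness of the graph of $A$ after passing to a subsequence, as you correctly note.) With that adjustment, your proof is complete.
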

\begin{proof}
Let $\lambda\in\spip(A)$. A similar reasoning applies to $\lambda\in\spim(A)$.

(i)$\Sra$(ii). Let $\calH_\lambda$ be a linear manifold
 with finite codimension as in Definition \ref{d:spip}.
 Then there exists a finite-dimensional subspace $\calZ\subset\calH$, such that
$$
\calH = \calH_\lambda \ds \calZ \quad\text{and}\quad
\dom A = (\dom A\cap\calH_\lambda) \ds (\dom A\cap\calZ),
$$
see, e.g., \cite[\S 7.6]{ko}.
Thus, $\calD_\lambda := \dom A\cap\calH_\lambda$ is a linear manifold as in statement (ii).

(ii)$\Sra$(iii). Let $\calD_\lambda$ be a linear manifold as in (ii). In order to show (iii), we set $\wt\calD_\lambda := \ol{\calD_\lambda}^A$,
 where $\ol{\calD_\lambda}^A$ denotes the closure of $\calD_\lambda$ with respect
 to the graph norm in \eqref{PoeHoe}.
 Let $(x_n)$ in $\wt\calD_\lambda$ be a sequence satisfying \eqref{e:ae} if $\lambda\neq\infty$ or \eqref{e:aei} if $\lambda = \infty$. Then there is
  a sequence $(u_n)$ in $\calD_\lambda$ with $\|x_n - u_n\|\to 0$ and $\|Ax_n - Au_n\|\to 0$ as $n\to\infty$. If $\lambda\neq\infty$, we have $\|u_n\|\to 1$ and $(A - \lambda)u_n\to 0$ as $n\to\infty$, which implies
$$
\liminf_{n\to\infty}\,[x_n,x_n] = \liminf_{n\to\infty}\,[u_n,u_n] > 0.
$$
If $\lambda = \infty$, $u_n\to 0$ and $\|Au_n\| \to 1$ as $n\to\infty$ follows, which yields
$$
\liminf_{n\to\infty}\,[Ax_n,Ax_n] = \liminf_{n\to\infty}\,[Au_n,Au_n] > 0.
$$
This shows (iii).

(iii)$\Sra$(iv) \& (v)$\Sra$(iv). Suppose that (iv) is not true.
 If $\lambda\neq\infty$, then for any subspace $\calM$ of $\calH$
 with finite codimension there is a sequence $(x_{n,\calM})\subset\calM
 \cap\dom A$ with $\|x_{n,\calM}\|=1$, $n\in\N$, $(A-\lambda)x_{n,\calM} \to 0$,
 $n\to\infty$, and $\liminf_{n\to\infty}[x_{n,\calM},x_{n,\calM}] \leq 0$.
 Hence, by induction, we find a sequence $(x_n)$ in $\dom A$
 with $\|x_n\|=1$, $x_n\in \{x_1, \ldots, x_{n-1}\}^\perp$,
 $\|(A-\lambda)x_n\|\leq \frac{1}{n}$ and $[x_n,x_n]\leq \frac{1}{n}$. 
Therefore the orthonormal sequence $(x_n)$ satisfies
$$
(A - \lambda)x_n\to 0 \;\text{ as }n\to\infty\quad\text{and}\quad\liminf_{n\to\infty}\,[x_n,x_n] \le 0.
$$
 In the case $\lambda = \infty$ there exists a sequence $(x_n)$ in $\dom A$ with
$$
\|Ax_n\| = 1, \; x_n\to 0 \;\text{ as }n\to\infty \quad\text{and}\quad
\liminf_{n\to\infty}\,[Ax_n,Ax_n] \le 0.
$$
We may assume that $(Ax_n)$ converges weakly to some $y$. Since $A$ is a
 closed operator, it has a closed graph which  is also weakly closed.
  Then $\{x_n,Ax_n\}\wto\{0,y\}$, $n\to\infty$, in $\calH\times\calH$ and $y = 0$
 follows. Thus, in both cases we have $x_n\wto 0$ and $Ax_n\wto 0$, which yields
 the weak convergence of $(x_n)$ to zero in
  $(\calH_A,\hproduct_A)$.
  This shows that (v) does not hold. Suppose, that (iii) holds
   Then $\calH_A$ admits a decomposition
$$
\calH_A = \wt\calD_\lambda \ds \wt\calD
$$
with some finite-dimensional subspace $\wt\calD$. The projections onto $\wt\calD_\lambda$ and $\wt\calD$ with respect to this decomposition are continuous in $\calH_A$. Let $(u_n)$ in $\wt\calD_\lambda$ and $(v_n)$ in $\wt\calD$ be sequences such that $x_n = u_n + v_n$ holds. Since $\wt\calD$ is finite-dimensional, $v_n\Awto 0$ implies $v_n\Ato 0$, which means $v_n\to 0$ and $Av_n\to 0$ as $n\to\infty$. If $\lambda\neq\infty$, we have $\|u_n\|\to 1$, $(A - \lambda)u_n\to 0$ as $n\to\infty$ and
$$
\liminf_{n\to\infty}\,[u_n,u_n] = \liminf_{n\to\infty}\,\bigl(
[x_n,x_n] - [x_n,v_n] - [v_n,u_n]\bigr) \le 0,
$$
which is a contradiction to (iii). If $\lambda = \infty$, then $u_n\to 0$, $\|Au_n\|\to 1$ as $n\to\infty$ and
$$
\liminf_{n\to\infty}\,[Au_n,Au_n] = \liminf_{n\to\infty}\,\bigl(
[Ax_n,Ax_n] - [Ax_n,Av_n] - [Av_n,Au_n]\bigr) \le 0
$$
follows, contradicting (iii).

Obviously, (iv) implies (i) and, hence, assertions (i)--(iv) are equivalent.
It remains to show that (iv) implies (v). For this let
$\calH_\lambda$ be a subspace as in (iv). Then there exists a finite-dimensional
subspace $\calG_\lambda\subset\dom A$, such that
$$
\calH = \calH_\lambda \ds \calG_\lambda.
$$
Let $\lambda\neq\infty$ and let $(x_n)$ in $\dom A$ be a sequence with $x_n\wto 0$ as $n\to\infty$ which fulfils \eqref{e:ae}. Further, let $(u_n)$ in $\calH_\lambda$ and $(v_n)$ in $\calG_\lambda$ be sequences, such that $x_n = u_n + v_n$. Then, since $v_n\wto 0$, we have $v_n\to 0$, and thus $\|u_n\|\to 1$, $(A - \lambda)u_n\to 0$,
and
$$
\liminf_{n\to\infty}\,[x_n,x_n] = \liminf_{n\to\infty}\,[u_n,u_n] > 0.
$$
Suppose $\lambda = \infty$. If $(x_n)$ in $\dom A$ is a sequence with $Ax_n\wto 0$ as $n\to\infty$ such that \eqref{e:aei} holds, define the sequences $(u_n)$ in $\calH_\lambda$ and $(v_n)$ in $\calG_\lambda$ as above. Then $u_n\to 0$, $v_n\to 0$ and $Av_n\to 0$ as $n\to\infty$, which implies $\lim_{n\to\infty}\|Au_n\| = 1$ and therefore
$$
\liminf_{n\to\infty}\,[Ax_n,Ax_n] = \liminf_{n\to\infty}\,[Au_n,Au_n] > 0.
$$
Therorem \ref{t:closed subspace} is proved.
\end{proof}

The following lemma shows that the point $\infty$ cannot be a spectral point of type
$\pi_+$ or $\pi_-$ when it is not of positive (resp.\ negative) type.

\begin{lemma}\label{l:infty_piplus}
$\infty\in\spip(A)$ implies $\infty\in\spp(A)$,
$\infty\in\spim(A)$ implies $\infty\in\smm(A)$.
\end{lemma}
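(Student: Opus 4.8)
The plan is to prove the first implication, $\infty\in\spip(A)\Rightarrow\infty\in\spp(A)$; the second one, $\infty\in\spim(A)\Rightarrow\infty\in\smm(A)$, then follows by the symmetric argument (replace $\liminf$ and ``$>0$'' by $\limsup$ and ``$<0$''). Unwinding Definition \ref{d:spip}, proving $\infty\in\spp(A)$ amounts to showing that, for $\lambda=\infty$, the finite-codimension restriction in the definition of a spectral point of type $\pi_+$ is automatically redundant. The observation that makes this work is that any sequence $(x_n)$ in $\dom A$ with $\|Ax_n\|=1$ and $x_n\to 0$ already forces $Ax_n\wto 0$ along a subsequence: since $(Ax_n)$ is norm-bounded it has a weakly convergent subsequence $Ax_n\wto z$, and then $\{x_n,Ax_n\}\wto\{0,z\}$ in $\calH\times\calH$; as $A$ is closed its graph is a norm-closed linear subspace, hence weakly closed, so $\{0,z\}$ lies in the graph and $z=A0=0$. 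This is precisely the device already used in the step $(\mathrm{iii})\Rightarrow(\mathrm{iv})$ of the proof of Theorem \ref{t:closed subspace} for $\lambda=\infty$.

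Given this, I would argue by contradiction. Suppose $\infty\in\spip(A)$ but $\infty\notin\spp(A)$. Then there is a sequence $(x_n)$ in $\dom A$ with $\|Ax_n\|=1$, $x_n\to 0$ and $\liminf_n[Ax_n,Ax_n]\le 0$. Pass to a subsequence (not relabelled) along which $[Ax_n,Ax_n]\to c\le 0$, and then, by the observation above, to a further subsequence with $Ax_n\wto 0$; both passages keep the sequence admissible, i.e.\ $\|Ax_n\|=1$ and $x_n\to 0$ are preserved. Since $\infty\in\spip(A)$, Theorem \ref{t:closed subspace} provides characterization (v) at $\lambda=\infty$, which says exactly that a sequence $(x_n)$ with $Ax_n\wto 0$, $\|Ax_n\|=1$ and $x_n\to 0$ must satisfy $\liminf_n[Ax_n,Ax_n]>0$. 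This contradicts $[Ax_n,Ax_n]\to c\le 0$ along the chosen sub-subsequence, so $\infty\in\spp(A)$.

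I do not expect a genuine obstacle here. The whole content is the observation that characterization (v) of Theorem \ref{t:closed subspace} already carries the hypothesis $Ax_n\wto 0$, and for the point $\infty$ this hypothesis comes for free from the closedness of $A$; the finite-codimension subspace is then not needed at all. The only routine care required is the two-stage subsequence extraction (first to make $[Ax_n,Ax_n]$ converge to the $\liminf$, then to make $(Ax_n)$ weakly convergent), together with the trivial check that both extractions leave $\|Ax_n\|=1$ and $x_n\to 0$ intact. For the $\pi_-$ statement one repeats the argument verbatim, using that $\infty\in\spim(A)$ yields the $\limsup$-version of characterization (v).
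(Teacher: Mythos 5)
Your proposal is correct and follows essentially the same route as the paper: argue by contradiction, extract a weakly convergent subsequence of $(Ax_n)$, use the weak closedness of the graph of $A$ to force the weak limit to be $0$, and then invoke characterization (v) of Theorem \ref{t:closed subspace}. The only difference is that you spell out the two-stage subsequence extraction (first making $[Ax_n,Ax_n]$ converge to a non-positive value, then extracting a weakly convergent subsequence) that the paper compresses into a single ``we may assume''; this is a welcome bit of extra care but not a different argument.
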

\begin{proof}
If $\infty\in\spip(A)\setminus\spp(A)$, then there exists $(x_n)$ in $\dom A$ with
$x_n\to 0$ as $n\to\infty$, $\|Ax_n\| = 1$ and $\liminf_{n\to\infty}\,[Ax_n,Ax_n] \le 0$. We may assume that $Ax_n\wto y$ as $n\to\infty$ for some $y\in\calH$. But then $\{x_n,Ax_n\}\wto\{0,y\}$ in $\calH\times\calH$, which implies $y = 0$, since $A$ is closed. By Theorem \ref{t:closed subspace} we obtain a contradiction to $\infty\in\spip(A)$.
\end{proof}

In the following we study compact sets which consist of points that either belong to $\spip(A)\cup\spim(A)$ or to $\er(A)$. As a byproduct, it will turn out that the sets $\spip(A)$, $\spim(A)$, $\spp(A)$ and $\smm(A)$ are relatively open in $\esap(A)$.
Theorems \ref{t:comp_set_pp} and \ref{t:comp_set} below can be proved
in the same way as in  \cite[Lemma 2 and Lemma 12]{ajt}.
Therefore, we omit their proofs.


\begin{theorem}\label{t:comp_set_pp}
Let $A$ be a closed, densely defined operator in $\calH$, and let $K\subset\ol\C$ be a compact set such that $K\cap\esap(A)\subset\spp(A)$ {\rm (}$K\cap\esap(A)\subset\smm(A)${\rm )}. Then there exist an open neighborhood $\calU$ in $\ol\C$ of $K$ and $\veps > 0$ such that
$$
\lambda\in\calU\setminus\{\infty\},\; x\in\dom A,\;
\|(A - \lambda)x\|\le\veps\|x\|
$$
implies
$$
[x,x]\ge\veps\|x\|^2 \quad (-[x,x]\ge\veps\|x\|^2,\text{ respectively}\,).
$$
In this case, we have
$$
\calU\cap\esap(A)\subset\spp(A)\quad (\calU\cap\esap(A)\subset\smm(A),\text{ respectively}).
$$
\end{theorem}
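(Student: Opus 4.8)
I would prove the first assertion by a compactness-and-contradiction argument (essentially as in \cite[Lemma~2]{ajt}) and then deduce the inclusion $\calU\cap\esap(A)\subset\spp(A)$ from it. Suppose no pair $(\calU,\veps)$ with the stated property exists. Fix open neighbourhoods $\calU_n$ of $K$ in $\ol\C$ with $\ol{\calU_n}\downarrow K$ (e.g.\ the chordal $1/n$-balls around $K$, which are nested and satisfy $\bigcap_n\ol{\calU_n}=K$). For each $n$, negating the property for $\calU_n$ and $\veps=1/n$ yields $\lambda_n\in\calU_n\setminus\{\infty\}$ and $x_n\in\dom A$ with $\|x_n\|=1$, $\|(A-\lambda_n)x_n\|\le 1/n$ and $[x_n,x_n]<1/n$. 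By compactness of $\ol\C$ we may pass to a subsequence with $\lambda_n\to\lambda_0$, and $\ol{\calU_n}\downarrow K$ forces $\lambda_0\in K$.

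\textbf{The two subcases.} If $\lambda_0\in\C$, then $(A-\lambda_0)x_n=(A-\lambda_n)x_n+(\lambda_n-\lambda_0)x_n\to 0$, so $\lambda_0\in\sap(A)\subset\esap(A)$ and hence $\lambda_0\in K\cap\esap(A)\subset\spp(A)$. Taking $\calH_{\lambda_0}=\calH$ in Definition~\ref{d:spip}, the sequence $(x_n)$ is an approximate eigensequence of $A$ at $\lambda_0$ with $\limsup_n[x_n,x_n]\le 0$, contradicting $\lambda_0\in\spp(A)$. If $\lambda_0=\infty$, then $\|Ax_n\|\ge|\lambda_n|\,\|x_n\|-\|(A-\lambda_n)x_n\|\ge|\lambda_n|-1/n\to\infty$, so $A$ is unbounded, $\infty\in\esap(A)$, and therefore $\infty\in K\cap\esap(A)\subset\spp(A)$. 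Setting $r_n:=(A-\lambda_n)x_n$ and $y_n:=x_n/\|Ax_n\|$ we get $y_n\in\dom A$, $\|Ay_n\|=1$ and $\|y_n\|=1/\|Ax_n\|\to 0$. Writing $Ax_n=\lambda_n x_n+r_n$ and expanding,
$$
[Ay_n,Ay_n]=\frac{|\lambda_n|^2}{\|Ax_n\|^2}\,[x_n,x_n]+\frac{\lambda_n[x_n,r_n]+\ol{\lambda_n}[r_n,x_n]+[r_n,r_n]}{\|Ax_n\|^2}.
$$
Since $\|Ax_n\|/|\lambda_n|\to 1$, $|[x_n,r_n]|\le\|G\|\,\|r_n\|\le\|G\|/n$ and $|[x_n,x_n]|\le\|G\|$, the second summand tends to $0$ while the first has $\limsup\le 0$; hence $\limsup_n[Ay_n,Ay_n]\le 0$, contradicting $\infty\in\spp(A)$. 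Thus a pair $(\calU,\veps)$ as claimed exists, and after possibly shrinking $\calU$ we may assume in addition that $\infty\in\calU$ only if $\infty\in K$.

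\textbf{The inclusion.} Let $\mu\in\calU\cap\esap(A)$. If $\mu\ne\infty$, then $\mu\in\sap(A)$, and for any sequence $(x_n)$ in $\dom A$ with $\|x_n\|=1$ and $(A-\mu)x_n\to 0$ one has $\|(A-\mu)x_n\|\le\veps\|x_n\|$ for all large $n$, whence $[x_n,x_n]\ge\veps$ for such $n$ by the first assertion; thus $\liminf_n[x_n,x_n]\ge\veps>0$, and $\calH_\mu=\calH$ works in Definition~\ref{d:spip}, so $\mu\in\spp(A)$. If $\mu=\infty$, then $\infty\in\esap(A)$ forces $A$ to be unbounded and, by the choice of $\calU$, $\infty\in K$, so $\mu=\infty\in K\cap\esap(A)\subset\spp(A)$ at once.

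\textbf{Main obstacle.} The delicate point is the subcase $\lambda_0=\infty$: one has to recognise $y_n=x_n/\|Ax_n\|$ as a legitimate test sequence for the point $\infty$ and to verify that, after dividing $[Ax_n,Ax_n]$ by $\|Ax_n\|^2$, the cross and remainder terms are negligible (because $\|Ax_n\|\sim|\lambda_n|\to\infty$ while $\|r_n\|\to 0$), so that the sign information $\limsup_n[x_n,x_n]\le 0$ is preserved. Everything else is routine normalisation and a limit argument.
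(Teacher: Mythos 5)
Your proposal is correct and follows essentially the approach the paper has in mind: the paper omits this proof and refers to \cite[Lemma~2]{ajt}, which is exactly the compactness-and-contradiction argument you reconstruct, including the rescaling $y_n=x_n/\|Ax_n\|$ to reduce the case $\lambda_0=\infty$ to the definition of $\infty\in\spp(A)$. All the estimates in the $\infty$-subcase (the $|\lambda_n|^2/\|Ax_n\|^2\to1$ normalisation and the negligibility of the cross and remainder terms via $|[u,v]|\le\|G\|\,\|u\|\,\|v\|$) are sound, and the deduction of $\calU\cap\esap(A)\subset\spp(A)$ from the uniform estimate is correct.
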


\begin{theorem}\label{t:comp_set}
Let $A$ be a closed, densely defined operator in $\calH$, and let $K\subset\ol\C$ be a compact set such that $K\cap\esap(A)\subset\spip(A)$ {\rm (}$K\cap\esap(A)\subset\spim(A)${\rm )}. Then there exist an open neighborhood $\calU$ in $\ol\C$ of $K$, a linear manifold $\calH_0\subset\calH$ with $\codim\calH_0 < \infty$, and $\veps > 0$ such that
$$
\lambda\in\calU\setminus\{\infty\},\; x\in\calH_0\cap\dom A,\;
\|(A - \lambda)x\|\le\veps\|x\|
$$
implies
$$
[x,x]\ge\veps\|x\|^2 \quad (-[x,x]\ge\veps\|x\|^2,\text{ respectively}\,).
$$
In this case, we have
$$
\calU\cap\esap(A)\subset\spip(A)\quad (\calU\cap\esap(A)\subset\spim(A),\text{ respectively}).
$$
\end{theorem}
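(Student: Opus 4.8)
The plan is to argue by contradiction, following the strategy of the proof of Theorem \ref{t:closed subspace} and the cited arguments from \cite{ajt}. Suppose first that no such triple $(\calU,\calH_0,\veps)$ exists. We construct a "bad" approximate eigensequence on a subspace of finite codimension and contradict the assumption $K\cap\esap(A)\subset\spip(A)$ together with Theorem \ref{t:comp_set_pp}'s mechanism, adapted to the $\pi_+$ setting. Concretely, by Theorem \ref{t:comp_set} itself applied locally — or rather, by a direct diagonal construction — for each $n\in\N$, each subspace $\calM\subset\calH$ with $\codim\calM<\infty$, and each $\veps=1/n$, there is a point $\lambda_{n,\calM}\in\calU_n\cap(\C\setminus\{\infty\})$ where $\calU_n$ ranges over a shrinking neighborhood basis of $K$, and a vector $x_{n,\calM}\in\calM\cap\dom A$ with $\|x_{n,\calM}\|=1$, $\|(A-\lambda_{n,\calM})x_{n,\calM}\|\le\tfrac1n\|x_{n,\calM}\|$, and $[x_{n,\calM},x_{n,\calM}]\le\tfrac1n$. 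Passing to a subsequence so that $\lambda_{n,\calM}\to\lambda_0\in K$ and choosing, by induction, $x_n\perp\{x_1,\dots,x_{n-1}\}$, we obtain an orthonormal sequence $(x_n)$ with $\lambda_{n}\to\lambda_0$, $(A-\lambda_n)x_n\to0$, hence $(A-\lambda_0)x_n\to0$, and $\limsup_n[x_n,x_n]\le0$. This would show $\lambda_0\notin\spip(A)$ if $\lambda_0\ne\infty$; if $\lambda_0=\infty$ one argues instead with $\|Ax_n\|=1$, $x_n\to0$, and uses that the graph of $A$ is weakly closed (as in Lemma \ref{l:infty_piplus}) to conclude $Ax_n\wto0$ and contradict $\infty\in\spip(A)$ via Theorem \ref{t:closed subspace}(v). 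Either way we contradict $K\cap\esap(A)\subset\spip(A)$, provided we can guarantee $\lambda_0\in\esap(A)$ — which holds because $\lambda_0\in K$ and the constructed sequence witnesses $\lambda_0\in\esap(A)$.

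For the second assertion, once the first part gives us $(\calU,\calH_0,\veps)$, we show $\calU\cap\esap(A)\subset\spip(A)$. Let $\mu\in\calU\cap\esap(A)$. If $\mu\ne\infty$, any sequence $(x_n)$ in $\calH_0\cap\dom A$ with $\|x_n\|=1$ and $(A-\mu)x_n\to0$ eventually satisfies $\|(A-\mu)x_n\|\le\veps\|x_n\|$, so $[x_n,x_n]\ge\veps\|x_n\|^2=\veps$, giving $\liminf_n[x_n,x_n]\ge\veps>0$; hence $\mu\in\spip(A)$ with $\calH_\mu:=\calH_0$. The case $\mu=\infty$ is handled by shrinking $\calU$ so that points of large modulus correspond, via $\mu\mapsto1/\mu$ or directly, to the $\infty$-condition, and then invoking Lemma \ref{l:infty_piplus} to upgrade to $\infty\in\spp(A)$; but since $\codim\calH_0<\infty$ the definiteness condition on $\calH_0$ suffices for $\infty\in\spip(A)$.

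The main obstacle I expect is the bookkeeping in the diagonal/induction step that produces a \emph{single} sequence $(x_n)$ lying in a \emph{fixed} cofinite subspace (or, for (v), converging weakly to $0$) while the points $\lambda_n$ vary: one must simultaneously (a) let the neighborhoods $\calU_n$ of $K$ shrink to $K$, (b) let the exceptional finite-dimensional complement grow, (c) keep $\|(A-\lambda_n)x_n\|$ and $[x_n,x_n]$ small, and (d) extract a limit point $\lambda_0\in K$. The orthogonality trick $x_n\in\{x_1,\dots,x_{n-1}\}^\perp$ forces weak convergence $x_n\wto0$, which is exactly what feeds into Theorem \ref{t:closed subspace}(v); verifying that this is compatible with the varying $\lambda_n$ (so that $(A-\lambda_0)x_n\to0$ still holds, using $\|\lambda_n-\lambda_0\|\to0$ and $\|x_n\|=1$) is the delicate point. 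Since this is carried out in detail in \cite[Lemma 2 and Lemma 12]{ajt} in the Krein space case and the present generalization does not use the Krein space structure in this argument, the proof transfers verbatim, and we omit the routine details.
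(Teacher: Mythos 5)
Your approach --- argue by contradiction via an inductive diagonal construction that produces an orthonormal ``bad'' approximate eigensequence, then invoke the weak-convergence characterization in Theorem~\ref{t:closed subspace}(v) --- matches the route the paper indicates (the proof is ``the same as in \cite[Lemma 2 and Lemma 12]{ajt}'' and is omitted there). The main body of your argument is sound: the induction that keeps $\|x_n\|=1$, $x_n\perp\{x_1,\dots,x_{n-1}\}$, $\|(A-\lambda_n)x_n\|\le\frac1n$, $[x_n,x_n]<\frac1n$ with $\lambda_n\in\calU_n$ shrinking to $K$; the extraction of $\lambda_0\in K$; the observation that $(A-\lambda_0)x_n\to0$ when $\lambda_0\ne\infty$; the renormalization $y_n=x_n/\|Ax_n\|$ and the weak-closedness-of-the-graph argument when $\lambda_0=\infty$; and the observation that the constructed sequence itself witnesses $\lambda_0\in\esap(A)$ so the hypothesis $K\cap\esap(A)\subset\spip(A)$ can be contradicted via Theorem~\ref{t:closed subspace}(v). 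The verification of the second assertion for finite $\mu\in\calU\cap\esap(A)$ with $\calH_\mu:=\calH_0$ is also correct.

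Your treatment of $\mu=\infty$ in the second assertion is, however, wrong. The quantitative condition in the first assertion is formulated only for $\lambda\in\calU\setminus\{\infty\}$ and gives no direct control over sequences $(x_n)$ with $\|Ax_n\|=1$, $x_n\to0$: for such a sequence one cannot in general produce $\lambda_n\in\C$ with $\|(A-\lambda_n)x_n\|\le\veps\|x_n\|$, since $Ax_n$ need not be approximately parallel to $x_n$ (the best choice, the Rayleigh quotient $\lambda_n=(Ax_n,x_n)/\|x_n\|^2$, gives $\|(A-\lambda_n)x_n\|\le\veps\|x_n\|$ only if $\|Ax_n\|$ is close to $|\lambda_n|\,\|x_n\|$, which is not guaranteed). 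Neither the map $\mu\mapsto1/\mu$ nor Lemma~\ref{l:infty_piplus} closes this gap --- the lemma needs $\infty\in\spip(A)$ as an \emph{input}. The correct observation is that this case is trivial and needs no first-part argument at all: if $\infty\notin K$ one simply chooses $\calU\subset\C$ bounded, so $\infty\notin\calU$; if $\infty\in K$, then $\infty\in\calU\cap\esap(A)$ already forces $\infty\in K\cap\esap(A)\subset\spip(A)$ directly from the hypothesis. Replacing your last few sentences by this dichotomy makes the proof complete.
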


Theorems \ref{t:comp_set_pp} and \ref{t:comp_set} in particular imply the following corollary.

\begin{corollary}
The sets $\spip(A)$, $\spim(A)$, $\spp(A)$ and $\smm(A)$ are {\rm (}relatively{\rm )} open in $\esap(A)$.
\end{corollary}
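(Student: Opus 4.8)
The plan is to obtain the corollary as an immediate specialization of Theorems \ref{t:comp_set_pp} and \ref{t:comp_set} to one-point sets. First I would fix a point $\lambda_0\in\spp(A)$ and observe that the singleton $K:=\{\lambda_0\}$ is a compact subset of $\ol\C$ with $K\cap\esap(A)=\{\lambda_0\}\subset\spp(A)$. Theorem \ref{t:comp_set_pp} then produces an open neighborhood $\calU$ of $\lambda_0$ in $\ol\C$ with $\calU\cap\esap(A)\subset\spp(A)$. Since $\lambda_0\in\spp(A)\subset\esap(A)$ was arbitrary and $\calU$ is open in $\ol\C$, this is precisely the statement that $\spp(A)$ is relatively open in $\esap(A)$. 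The same argument with the parenthetical (negative-type) version of Theorem \ref{t:comp_set_pp} handles $\smm(A)$.

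For $\spip(A)$ and $\spim(A)$ the reasoning is identical, with Theorem \ref{t:comp_set} replacing Theorem \ref{t:comp_set_pp}: given $\lambda_0\in\spip(A)$, put $K:=\{\lambda_0\}$; the hypothesis $K\cap\esap(A)\subset\spip(A)$ holds trivially, and the theorem supplies an open neighborhood $\calU$ of $\lambda_0$ in $\ol\C$ (together with a finite-codimensional $\calH_0$ and an $\veps>0$, which are not needed here) such that $\calU\cap\esap(A)\subset\spip(A)$. Hence $\spip(A)$ is relatively open in $\esap(A)$, and the analogous conclusion for $\spim(A)$ follows from the parenthetical version of Theorem \ref{t:comp_set}.

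I do not anticipate any genuine obstacle: both theorems are stated for an arbitrary compact $K\subset\ol\C$, and a single point is compact, so the corollary is a direct reading-off of their conclusions. The only point to keep in mind is the meaning of "relatively open in $\esap(A)$", namely open in the subspace topology inherited from $\ol\C$; this is exactly what is furnished by a set of the form $\calU\cap\esap(A)$ with $\calU$ open in $\ol\C$.
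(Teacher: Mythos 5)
Your proof is correct and is exactly the argument the paper has in mind: the corollary is stated as an immediate consequence of Theorems \ref{t:comp_set_pp} and \ref{t:comp_set}, obtained by taking $K$ to be a singleton. Nothing further to add.
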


There is a certain connection between the linear manifold $\calH_\lambda$ from Definition \ref{d:spip} and the "nonpositive part" of the eigenspace $\ker(A - \lambda)$. For this, we first recall the notion of an Almost Pontryagin space,
see e.g.\ \cite{kww} and \cite{pt}.

\begin{definition}
A subspace $\calL\subset\calH$ is called an
{\it Almost Pontryagin space with finite rank of non-positivity
{\rm(}non-negativity{\rm)}} if there exists a uniformly positive
 (uniformly negative, respectively) subspace
 $\wt\calL\subset\calL$ with $\codim_{\calL}\wt\calL < \infty$.
\end{definition}

\begin{lemma}\label{l:ap}
Let $A$ be a closed, densely defined operator in $\calH$ and
let $\lambda\in\spip(A)\setminus\{\infty\}$ {\rm (}$\lambda\in\spim(A)\setminus\{\infty\}${\rm )}. Then $\ker(A - \lambda)$ is an Almost Pontryagin space with finite rank of non-positivity {\rm (}non-negativity, respectively{\rm )}.
\end{lemma}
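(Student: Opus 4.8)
The plan is to exhibit inside $\ker(A-\lambda)$ a uniformly positive subspace of finite codimension; for $\lambda\in\spim(A)\setminus\{\infty\}$ the same argument produces a uniformly negative one.

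First I would replace the defining linear manifold by a closed one. By the equivalence (i)$\Leftrightarrow$(iv) of Theorem \ref{t:closed subspace}, since $\lambda\in\spip(A)$ and $\lambda\neq\infty$, there is a subspace $\calH_\lambda\subset\calH$ with $\codim\calH_\lambda<\infty$ such that every sequence $(x_n)$ in $\calH_\lambda\cap\dom A$ with $\|x_n\|=1$ and $(A-\lambda)x_n\to 0$ satisfies $\liminf_n[x_n,x_n]>0$. Since $A$ is closed, $\calN:=\ker(A-\lambda)$ is a subspace and $\calN\subset\dom A$. Put $\calN_0:=\calN\cap\calH_\lambda$. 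As an intersection of two subspaces it is a subspace, and the natural injection $\calN/\calN_0\hookrightarrow\calH/\calH_\lambda$ shows $\codim_{\calN}\calN_0\le\codim\calH_\lambda<\infty$. Thus it suffices to prove that $\calN_0$ is uniformly positive.

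I would establish this by contradiction. For $x\in\calN_0$ one has $(A-\lambda)x=0$, so applying the property of $\calH_\lambda$ to the constant sequence $x_n\equiv x$ (with $\|x\|=1$) gives $[x,x]>0$; hence $\delta:=\inf\{[x,x]:x\in\calN_0,\ \|x\|=1\}\ge 0$. If $\delta=0$, pick $x_n\in\calN_0$ with $\|x_n\|=1$ and $[x_n,x_n]\to 0$. This is a sequence in $\calH_\lambda\cap\dom A$ with $\|x_n\|=1$ and $(A-\lambda)x_n=0\to 0$, yet $\liminf_n[x_n,x_n]=0$, contradicting the choice of $\calH_\lambda$. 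So $\delta>0$, and by homogeneity $[x,x]\ge\delta\|x\|^2$ for all $x\in\calN_0$, i.e.\ $\calN_0$ is uniformly positive. Therefore $\calN=\ker(A-\lambda)$ is an Almost Pontryagin space with finite rank of non-positivity. The negative-type case is obtained verbatim, using $\limsup$ in place of $\liminf$ and $\sup$ in place of $\inf$.

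There is no substantial obstacle here; the one point to be careful about is that the subspace $\calH_\lambda$ furnished directly by Definition \ref{d:spip} is only a linear manifold, so one should invoke part (iv) of Theorem \ref{t:closed subspace} (or else take closures, using continuity of $\product$ to preserve uniform positivity) to guarantee that $\calN_0$ is closed and hence a genuine subspace, as required by the definition of an Almost Pontryagin space.
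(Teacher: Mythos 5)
Your proof is correct, and it takes a genuinely different route from the paper. The paper starts from an arbitrary fundamental decomposition $\calL_+\,[\dotplus]\,\calL_-\,[\dotplus]\,\calL^\iso$ of $\ker(A-\lambda)$; it then invokes Theorem~\ref{t:closed subspace}\,(v) twice, once with an orthonormal sequence to show $\dim(\calL_-\,[\dotplus]\,\calL^\iso)<\infty$, and once (together with a weak-compactness argument in the unit ball and a Cauchy--Schwarz estimate for $\product$ on $\calL_+$) to show that $\calL_+$ is uniformly positive. You instead use Theorem~\ref{t:closed subspace}\,(iv) to obtain a \emph{closed} $\calH_\lambda$ of finite codimension, take $\calN_0=\ker(A-\lambda)\cap\calH_\lambda$, get finite codimension of $\calN_0$ in $\ker(A-\lambda)$ from the injection $\calN/\calN_0\hookrightarrow\calH/\calH_\lambda$, and prove uniform positivity of $\calN_0$ directly: a norm-one sequence in $\calN_0$ with $[x_n,x_n]\to 0$ already violates the defining property of $\calH_\lambda$, since $(A-\lambda)x_n\equiv 0$. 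This avoids both the fundamental decomposition and the weak-compactness step, so your argument is shorter and more elementary. What the paper's route buys is a bit of extra structural information en route (the finite-dimensionality of the full non-positive part of a fundamental decomposition), but for the statement of the lemma as given your approach is cleaner. Your closing remark about needing $\calH_\lambda$ closed (hence appealing to part (iv) rather than to the raw Definition~\ref{d:spip}) is exactly the right point of care.
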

\begin{proof}
We show Lemma \ref{l:ap} only for $\lambda\in\spip(A)$. Let $\calL_+[\ds]\calL_-[\ds]\calL^\iso$ be a fundamental decomposition of
 $\ker(A - \lambda)$, cf.\ \eqref{e:fd}.
If $\calL_-[\ds]\calL^\iso$ is infinite-dimensional then there is an
orthonormal  sequence in $\calL_-[\ds]\calL^\iso \subset \ker(A - \lambda)$,
which is
by Theorem \ref{t:closed subspace} (v) impossible.
Hence, $\calL_+$ has finite codimension in $\calL$, and
it remains to show that $\calL_+$ is uniformly positive.
Suppose, that this is not the case. Then there exists a sequence $(x_n)$
in $\calL_+$ with $\|x_n\| = 1$ and $\lim_{n\to\infty}[x_n,x_n] = 0$.
We may assume that $(x_n)$ converges weakly to some $x_0\in\calL_+$.
For this $x_0$ we have
$$
|[x_0,x_0]| \le |[x_0,x_0 - x_n]| + [x_0,x_0]^{1/2}[x_n,x_n]^{1/2},
$$
and $x_0 = 0$ follows. But this contradicts $\lambda\in\spip(A)$ by Theorem \ref{t:closed subspace} (v).
\end{proof}

The following theorem characterizes which linear manifolds can be
 chosen for $\calH_\lambda$ in Definition \ref{d:spip} and what
 their smallest possible codimension is. Theorem \ref{t:H0} is
 contained in \cite[Lemma 3.1, Theorem 3.3]{bpt} for the situation
 where $A$ is a selfadjoint operator in a Krein space. However,
 the proof in \cite{bpt} is also valid for the current situation,
 where $A$ is only a closed, densely defined operator in a space
 with inner product given by \eqref{Geraberg}. Therefore, we omit
 the proof of Theorem \ref{t:H0}.

\begin{theorem}\label{t:H0}
Let $A$ be a closed, densely defined operator in $\calH$ and
let $\lambda\in\spip(A)\setminus\{\infty\}$ {\rm (}$\lambda\in\spim(A)\setminus\{\infty\}${\rm )}. A linear manifold $\calH_\lambda$ with $\codim\calH_\lambda < \infty$ is as in Definition {\rm \ref{d:spip}} if and only if the subspace
\begin{equation*}\label{Himmel}
\ol{\calH_\lambda\cap\dom A}^A\cap\ker(A - \lambda)
\end{equation*}
is positive {\rm (}negative, respectively{\rm )}. In the case where $\calH_\lambda$ is closed, this is equivalent to the positivity {\rm (}negativity, respectively{\rm )} of
\begin{equation*}\label{Erde}
\calH_\lambda\cap\ker(A - \lambda).
\end{equation*}
Moreover, there exists a subspace $\calH_\lambda'$ with this property and
$$
\codim\calH_\lambda' = \kappa_{-,0}(\ker(A - \lambda))
\quad\left(
\codim\calH_\lambda' = \kappa_{+,0}(\ker(A - \lambda)),\text{ respectively}\right)
$$
which is the smallest possible codimension of all the linear manifolds
 satisfying the conditions of Definition {\rm \ref{d:spip}}.
\end{theorem}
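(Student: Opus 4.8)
The plan is to prove the three assertions of the theorem---the characterization of the \emph{admissible} manifolds $\calH_\lambda$ (those as in Definition~\ref{d:spip}), its simplification when $\calH_\lambda$ is closed, and the determination of the least admissible codimension---in that order, using Theorem~\ref{t:closed subspace}(v) and Lemma~\ref{l:ap} throughout. Necessity in the characterization is quick: if $\calH_\lambda$ is admissible and $x_0\in\ol{\calH_\lambda\cap\dom A}^A\cap\ker(A-\lambda)$ is nonzero, choose $u_n\in\calH_\lambda\cap\dom A$ with $u_n\Ato x_0$; then $(A-\lambda)u_n\to(A-\lambda)x_0=0$ and $\|u_n\|\to\|x_0\|\ne0$, so $x_n:=u_n/\|u_n\|$ is an approximate eigensequence in $\calH_\lambda\cap\dom A$, and hence $0<\liminf_n[x_n,x_n]=[x_0,x_0]/\|x_0\|^2$, i.e.\ $x_0$ is positive.

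Sufficiency is the heart of the matter. Suppose $\calP:=\ol{\calH_\lambda\cap\dom A}^A\cap\ker(A-\lambda)$ is positive, and assume for contradiction that there is a sequence $(x_n)$ in $\calH_\lambda\cap\dom A$ with $\|x_n\|=1$, $(A-\lambda)x_n\to0$, and, after passing to a subsequence, $[x_n,x_n]\to c\le0$. Since $\|Ax_n\|\to|\lambda|$, the sequence $(x_n)$ is bounded in the Hilbert space $\calH_A$, so along a further subsequence $x_n\Awto x_0$ for some $x_0\in\calH_A$; then $Ax_n\wto Ax_0$ in $\calH$, and combined with $(A-\lambda)x_n\to0$ this forces $x_0\in\ker(A-\lambda)$. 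As $\ol{\calH_\lambda\cap\dom A}^A$ is a closed, hence weakly closed, subspace of $\calH_A$ containing all $x_n$, we get $x_0\in\calP$, so $[x_0,x_0]\ge0$. Writing $x_n=y_n+x_0$, we have $y_n\wto0$ in $\calH$ and $(A-\lambda)y_n\to0$. If $\|y_n\|\to0$, then $x_n\to x_0$ strongly, $\|x_0\|=1$, and $[x_n,x_n]\to[x_0,x_0]>0$, contradicting $c\le0$. Otherwise, along a subsequence $\|y_n\|\to d>0$, so $y_n/\|y_n\|$ is a unit approximate eigensequence converging weakly to $0$; Theorem~\ref{t:closed subspace}(v) gives $\liminf_n[y_n,y_n]=d^2\liminf_n[y_n/\|y_n\|,y_n/\|y_n\|]>0$, while $[y_n,x_0]=(Gy_n,x_0)\to0$ since $y_n\wto0$ and $G$ is bounded. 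Therefore $\liminf_n[x_n,x_n]=\liminf_n[y_n,y_n]+[x_0,x_0]>0$, again contradicting $c\le0$. Hence $\calH_\lambda$ is admissible.

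If $\calH_\lambda$ is closed in $\calH$, then $\ol{\calH_\lambda\cap\dom A}^A\cap\ker(A-\lambda)=\calH_\lambda\cap\ker(A-\lambda)$: the inclusion "$\supseteq$" is clear, and "$\subseteq$" holds because convergence in $\calH_A$ implies convergence in $\calH$, so any $\|\cdot\|_A$-limit of vectors of $\calH_\lambda\cap\dom A$ lies in the $\calH$-closed set $\calH_\lambda$; this yields the stated equivalence. For the final assertion put $\calL:=\ker(A-\lambda)$; by Lemma~\ref{l:ap} and its proof we may fix a fundamental decomposition $\calL=\calL_+[\ds]\calL_-[\ds]\calL^\iso$ with $\calL_+$ uniformly positive, so $\calF:=\calL_-[\ds]\calL^\iso$ is finite-dimensional with $\dim\calF=\kappa_{-,0}(\calL)$, satisfies $[x,x]\le0$ for all $x\in\calF$, and is contained in $\calL_+^\gperp$. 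By \eqref{Elgersburg}, $\calH=\calL_+[\ds]\calL_+^\gperp$; choosing a closed complement $\calN$ of the finite-dimensional $\calF$ in $\calL_+^\gperp$ and putting $\calH_\lambda':=\calL_+[\ds]\calN$, we get a closed subspace with $\calH=\calH_\lambda'\oplus\calF$, hence $\codim\calH_\lambda'=\kappa_{-,0}(\calL)$, and a short computation using $\calL_+\cap\calL_+^\gperp=\{0\}$ and $\calN\cap\calF=\{0\}$ shows $\calH_\lambda'\cap\calL=\calL_+$, which is positive; by the closed case, $\calH_\lambda'$ is admissible.

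It remains to see that $\kappa_{-,0}(\calL)$ is minimal. If $\calH_\lambda$ is any admissible manifold, then $\calM:=\ol{\calH_\lambda\cap\dom A}^A$ satisfies $\codim_{\calH_A}\calM\le\codim\calH_\lambda$, and, by the characterization already proved, $\calM\cap\calL$ is positive; since $\calF\subset\calL\subset\calH_A$ is non-positive, $\calM\cap\calF=\{0\}$, whence $\codim_{\calH_A}\calM\ge\dim\calF=\kappa_{-,0}(\calL)$ and thus $\codim\calH_\lambda\ge\kappa_{-,0}(\calL)$. The main obstacle is the sufficiency step: one must exclude "rogue" approximate eigensequences inside $\calH_\lambda\cap\dom A$, and the decisive idea is to pass to a weak limit $x_0$ in the graph-norm Hilbert space $\calH_A$, use weak closedness of $\ol{\calH_\lambda\cap\dom A}^A$ there to locate $x_0$ in the positive set $\calP$, and split $x_n=y_n+x_0$ so that the weakly null remainder $y_n$ can be handled by Theorem~\ref{t:closed subspace}(v).
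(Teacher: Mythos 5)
Your proof is correct. Since the paper does not spell out an argument but delegates to [bpt, Lemma 3.1, Theorem 3.3], a direct comparison is not possible here; what can be said is that your proof is self-contained and uses exactly the machinery the paper itself makes available for this purpose, namely Theorem~\ref{t:closed subspace}(v), Lemma~\ref{l:ap}, and the decomposition \eqref{Elgersburg}.

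A few remarks on the steps that deserve checking. The passage from $\liminf_n[x_n,x_n]\le 0$ to a subsequence with a weak limit $x_0\in\calH_A$ is legitimate because $\|x_n\|_A^2=1+\|Ax_n\|^2\to 1+|\lambda|^2$, and the identification of the $\calH_A$-weak limit with the $\calH$-weak limits of $(x_n)$ and $(Ax_n)$ uses that the graph of the closed operator $A$ is weakly closed. Locating $x_0$ in $\calP$ via weak closedness of the norm-closed subspace $\ol{\calH_\lambda\cap\dom A}^A$ is the key device, and the split $x_n=y_n+x_0$ with $y_n\wto 0$ cleanly separates the two mechanisms ($\calP$ positive handles $x_0$; Theorem~\ref{t:closed subspace}(v) handles the weakly null part). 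Note that the dichotomy between $\|y_n\|\to 0$ and $\|y_n\|\to d>0$ must be applied after a further subsequence extraction, which is fine since $[x_n,x_n]\to c$ persists. In the minimality argument, the inequality $\codim_{\calH_A}\calM\le\codim\calH_\lambda$ rests on the fact that one can choose the finite-dimensional complement $\calZ$ inside $\dom A$, as is also used in the proof of Theorem~\ref{t:closed subspace}(i)$\Rightarrow$(ii); you implicitly use this, and it is correct. Finally, for the construction of $\calH_\lambda'$, the claim $\calH_\lambda'\cap\ker(A-\lambda)=\calL_+$ requires $\calL_+\cap\calL_+^\gperp=\{0\}$, which holds because $\calL_+$ is uniformly positive, and $\calN\cap\calF=\{0\}$ by the choice of complement; both are as you indicate. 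The argument goes through verbatim in the $\pi_-$ case with signs reversed.
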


\begin{corollary}\label{c:spp_in_spip}
Let $\lambda\in\spip(A)\setminus\{\infty\}$ {\rm (}$\lambda\in\spim(A)\setminus\{\infty\}${\rm )}. Then $\lambda\in\spp(A)$ {\rm (}$\lambda\in\smm(A)$, respectively{\rm )} if and only if $\ker(A - \lambda)$ is positive {\rm (}negative, respectively{\rm )}.
\end{corollary}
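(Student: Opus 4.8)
The plan is to read the corollary off directly from Theorem~\ref{t:H0}. By Definition~\ref{d:spip}, the statement $\lambda\in\spp(A)$ means precisely that the linear manifold $\calH_\lambda$ occurring there may be chosen to be $\calH$ itself. Since $\codim\calH = 0 < \infty$, the whole space $\calH$ is an admissible candidate for $\calH_\lambda$, and it is closed, so the second characterization in Theorem~\ref{t:H0} applies to it: under the standing hypothesis $\lambda\in\spip(A)\setminus\{\infty\}$, the space $\calH$ is as in Definition~\ref{d:spip} if and only if $\calH\cap\ker(A-\lambda)$ is positive. As $\ker(A-\lambda)\subset\calH$, this last subspace equals $\ker(A-\lambda)$, so $\lambda\in\spp(A)$ if and only if $\ker(A-\lambda)$ is positive. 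The case $\lambda\in\spim(A)\setminus\{\infty\}$ is handled in exactly the same way, replacing ``positive'' by ``negative'' and $\spp$ by $\smm$ throughout.

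There is essentially no obstacle here; the only thing to keep in mind is that Theorem~\ref{t:H0} needs $\lambda\in\spip(A)\setminus\{\infty\}$ as an input, which is exactly what the corollary assumes. If one wishes to avoid invoking Theorem~\ref{t:H0}, the forward implication admits a one-line direct argument: for any nonzero $x\in\ker(A-\lambda)$ the constant sequence $x_n := x/\|x\|$ satisfies $\|x_n\|=1$ and $(A-\lambda)x_n = 0$, hence is an approximate eigensequence at $\lambda$, so $\lambda\in\spp(A)$ forces $[x,x] = \|x\|^2\,\liminf_{n\to\infty}[x_n,x_n] > 0$, i.e. $\ker(A-\lambda)$ is positive. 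For the converse, however, one still needs the structural content of Theorem~\ref{t:H0} (equivalently, the weak-limit argument of Theorem~\ref{t:closed subspace}(v) used as in the proof of Lemma~\ref{l:ap}) to rule out an approximate eigensequence at $\lambda$ with $\liminf[x_n,x_n]\le 0$ lying outside any given finite-codimensional manifold; quoting Theorem~\ref{t:H0} is the cleanest route.
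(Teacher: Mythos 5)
Your proof is correct and is exactly the paper's intended route: the corollary is stated immediately after Theorem~\ref{t:H0} precisely so that one takes $\calH_\lambda=\calH$ (closed, codimension zero) in the second characterization there, giving $\lambda\in\spp(A)$ iff $\calH\cap\ker(A-\lambda)=\ker(A-\lambda)$ is positive. Your added one-line direct argument for the forward implication is a nice sanity check but is not needed.
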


\begin{corollary}\label{Steinach}
Any $\lambda\in\spip(A)\setminus\spp(A)$ {\rm (}$\lambda\in\spim(A)\setminus\smm(A)${\rm )} is an eigenvalue of $A$ with a corresponding non-positive {\rm (}non-negative, respectively{\rm )} eigenvector.
\end{corollary}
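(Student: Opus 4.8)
The plan is to derive the statement entirely from Corollary~\ref{c:spp_in_spip}, together with the observation that a spectral point of type $\pi_+$ which is not of positive type can never be the point $\infty$. So first I would dispose of the point at infinity: by Lemma~\ref{l:infty_piplus}, $\infty\in\spip(A)$ implies $\infty\in\spp(A)$, hence $\spip(A)\setminus\spp(A)\subset\C$; the analogous implication rules out $\infty\in\spim(A)\setminus\smm(A)$. Thus any $\lambda$ as in the statement satisfies $\lambda\in\spip(A)\setminus\{\infty\}$ (resp.\ $\lambda\in\spim(A)\setminus\{\infty\}$), which is exactly the setting in which Corollary~\ref{c:spp_in_spip} is available.

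Next I would apply Corollary~\ref{c:spp_in_spip} in its contrapositive form: since $\lambda\notin\spp(A)$, the subspace $\ker(A-\lambda)$ is \emph{not} positive. Recalling the definition of a positive subspace — all of its non-zero vectors are positive, so in particular $\{0\}$ is (vacuously) positive — it follows that $\ker(A-\lambda)\ne\{0\}$, so $\lambda$ is an eigenvalue of $A$, and that there exists a non-zero $x\in\ker(A-\lambda)$ which fails to be positive, i.e.\ $[x,x]\le 0$. This $x$ is the desired non-positive eigenvector. The case $\lambda\in\spim(A)\setminus\smm(A)$ is treated the same way, with ``positive'' replaced by ``negative'' throughout and $[x,x]\le 0$ replaced by $[x,x]\ge 0$.

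Since the assertion is an immediate consequence of the equivalence already recorded in Corollary~\ref{c:spp_in_spip}, there is no genuine obstacle here; the only points requiring a moment's attention are the reduction $\lambda\ne\infty$ (needed so that Corollary~\ref{c:spp_in_spip} applies) and the remark that the trivial subspace $\{0\}$ counts as positive, which is precisely what forces $\ker(A-\lambda)$ to be non-trivial.
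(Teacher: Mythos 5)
Your proposal is correct and follows exactly the route the paper intends: the corollary is stated without proof immediately after Corollary~\ref{c:spp_in_spip}, and your deduction — excluding $\infty$ via Lemma~\ref{l:infty_piplus}, then reading Corollary~\ref{c:spp_in_spip} contrapositively and noting that the trivial subspace is vacuously positive — is the expected argument. The observation that non-positivity of $\ker(A-\lambda)$ forces it to be nontrivial is the small but essential point, and you handled it correctly.
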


Probably the most important
property of spectral points of type $\pi_+$ and type $\pi_-$ is their
 stability under compact perturbations.
  For this, let  $A$
be a closed, densely defined operator
in $\calH$. Recall that an operator
$K$  is said to be $A$-compact if $\dom A \subset \dom K$
and $K$, as a mapping from $(\calH_A,\hproduct_A)$ (cf.\ \eqref{PoeHoe}),
 into $\cal H$, is compact,
cf.\ \cite[IV \S 1.3]{k}.

\begin{theorem}\label{t:comp_1}
Let $A$ and $B$ be closed and densely defined operators in $\calH$, and assume that either $\dom B = \dom A$ such that $B - A$ is $A$-compact or that $\rho(A)\cap\rho(B)\neq\emptyset$ such that
\begin{equation} \label{Wittenberg2}
(A - \mu)^{-1} - (B- \mu)^{-1}
\end{equation}
is compact for some {\rm (}and hence for all\,{\rm )} $\mu\in\rho(A)\cap\rho(B)$. Then
\begin{align}
\begin{split}\label{Wittenberg3}
\spip(A)\cup\r(A) = \spip(B)\cup\r(B)\quad & \mbox{and}\quad
\spim(A)\cup\r(A) = \spim(B)\cup\r(B),\\
\infty\in\spp(A) &\Llra \infty\in\spp(B),\\
\infty\in\smm(A) &\Llra \infty\in\smm(B),\\
\infty\in\er(A)  &\Llra \infty\in\er(B).
\end{split}
\end{align}
\end{theorem}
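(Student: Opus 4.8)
The plan is to reduce everything to the characterization of $\spip(A)\cup\r(A)$ via $\Phi_+$-type estimates combined with a sign condition on approximate eigensequences, and then exploit the stability of $\Phi_+$-operators (equivalently, of semi-Fredholm-type lower bounds) under compact perturbations. By the symmetry of the hypothesis in $A$ and $B$ it suffices to prove one inclusion, say $\spip(A)\cup\r(A)\subseteq\spip(B)\cup\r(B)$; the $\pi_-$ statement follows by replacing $\product$ with $-\product$, and the four equivalences at $\infty$ are handled separately at the end. I would treat the two cases of the hypothesis — the $A$-compact case and the resolvent-difference case — largely in parallel, reducing the second to the first via the identity $(B-\mu)^{-1}-(A-\mu)^{-1}=(B-\mu)^{-1}\bigl((A-\mu)-(B-\mu)\bigr)(A-\mu)^{-1}$ on $\dom A$, which shows that the resolvent difference being compact forces, after composing with the (bounded) maps $(A-\mu)$ and $(B-\mu)^{-1}$, that $B-A$ is $A$-compact as an operator $\calH_A\to\calH$; alternatively one keeps the resolvent formulation throughout and uses that $K$ is $A$-compact iff $K(A-\mu)^{-1}$ is compact on $\calH$.

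The core of the argument is as follows. Fix $\lambda\in\bigl(\spip(A)\cup\r(A)\bigr)\cap\C$ and suppose $\lambda\notin\r(B)$; I must show $\lambda\in\spip(B)$. By Theorem \ref{t:comp_set} (applied to the compact set $K=\{\lambda\}$, using that $\lambda\in\spip(A)$ forces, via Remark \ref{r:PhiPlus} and the hypothesis, control near $\lambda$) there is $\veps>0$ and a finite-codimensional linear manifold $\calH_0\subset\calH$ such that $x\in\calH_0\cap\dom A$ and $\|(A-\lambda)x\|\le\veps\|x\|$ imply $[x,x]\ge\veps\|x\|^2$; if instead $\lambda\in\r(A)$, use \eqref{e:rA} to get $\|(A-\lambda)x\|\ge c\|x\|$ for all $x\in\dom A$, which is the even stronger statement that no approximate eigensequence exists at $\lambda$. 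Now take any sequence $(x_n)$ in $\dom B$ with $\|x_n\|=1$ and $(B-\lambda)x_n\to0$. Writing $(A-\lambda)x_n=(B-\lambda)x_n-(B-A)x_n$, I want to show that, after restricting to a subspace of finite codimension, $(A-\lambda)x_n$ is eventually $\le\veps\|x_n\|$, so that the sign estimate for $A$ transfers. This is exactly where $A$-compactness enters: $(x_n)$ is bounded in $\calH_A$ (since $\|Ax_n\|\le\|(A-\lambda)x_n\|+|\lambda|$ and $(A-\lambda)x_n$ is bounded — here one must first pass to a subsequence along which $x_n\rightharpoonup x_0$ weakly in $\calH$, hence $x_n\Awto x_0$ in $\calH_A$), so $(B-A)x_n$ has a norm-convergent subsequence in $\calH$; if the limit is $0$ we are done, and the case where the limit is nonzero would force, via weak lower semicontinuity and closedness of $A$, that $x_0\ne0$ and $(A-\lambda)x_0=(B-\lambda)x_0-(B-A)x_0$ — one then peels off the finite-dimensional span of such obstructions to land inside $\calH_0$ and inside a finite-codimensional manifold on which the estimate holds, using the standard "orthonormalize and diagonalize" extraction already used in the proof of Theorem \ref{t:closed subspace}((iii)$\Rightarrow$(iv)).

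Finally, for the three statements involving $\infty$: by Lemma \ref{l:infty_piplus}, $\infty\in\spip(A)$ iff $\infty\in\spp(A)$, so the first two equivalences reduce to one about points of positive/negative type at $\infty$, which unwinds via Definition \ref{d:spip} to: every sequence $(x_n)$ in $\dom A$ with $\|Ax_n\|=1$, $x_n\to0$ satisfies $\liminf[Ax_n,Ax_n]>0$. Here $A$-compactness of $B-A$ gives $(B-A)x_n\to0$ in $\calH$ directly (since $x_n\to0$ in $\calH$ and $\|Ax_n\|$ bounded means $(x_n)$ bounded in $\calH_A$, and $x_n\Ato$ something which must be $0$), so $\|Ax_n-Bx_n\|\to0$ and $\|Bx_n\|\to1$, $[Ax_n,Ax_n]-[Bx_n,Bx_n]\to0$, transferring the sign; the reverse direction is symmetric. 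The statement $\infty\in\er(A)\Llra\infty\in\er(B)$ is the analogue with the estimate \eqref{e:rA} (no sequence exists) in place of a sign condition, and follows from the same compactness extraction. The main obstacle I anticipate is the bookkeeping in the non-$\infty$ case: carefully arranging that the finitely many "bad directions" $x_0$ produced by non-vanishing limits of $(B-A)x_n$ can be simultaneously excluded by passing to a single finite-codimensional manifold, while keeping the quantitative estimate $\|(A-\lambda)x_n\|\le\veps$ and the membership in $\calH_0$ — but this is precisely the diagonal-sequence technique already deployed in Theorem \ref{t:closed subspace}, so it should go through without genuinely new ideas, and I would cite \cite[Lemma 2, Lemma 12]{ajt} for the analogous Krein-space computations since the argument is verbatim the same once $A$-compactness is in hand.
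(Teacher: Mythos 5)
Your proposed reduction of the resolvent-difference hypothesis to the $A$-compact hypothesis does not work in general, and this is a genuine gap. In the resolvent-difference case nothing forces $\dom A = \dom B$, so the factor $(A-\mu)-(B-\mu)=A-B$ in the identity
$(B-\mu)^{-1}-(A-\mu)^{-1}=(B-\mu)^{-1}\bigl((A-\mu)-(B-\mu)\bigr)(A-\mu)^{-1}$
is applied to vectors $(A-\mu)^{-1}y\in\dom A$ that need not lie in $\dom B$, so the middle factor is simply undefined. (Equivalently: $B-A$ need not be an operator at all, let alone an $A$-compact one.) The paper avoids this by treating the two hypotheses completely separately. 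For the resolvent-difference case, the non-$\infty$ equalities are outsourced to \cite[Theorem 4.1]{abjt} (whose proof is observed to survive without bounded invertibility of $G$), and the $\infty$-statements are proved by hand: given an approximate eigensequence $(x_n)$ of $B$ at $\infty$ with $Bx_n\wto0$, one sets $u_n:=(A-\mu)^{-1}(B-\mu)x_n=x_n-K(B-\mu)x_n$, uses compactness of $K$ to get $K(B-\mu)x_n\to0$, and transfers the sign condition to the sequence $(u_n)$ in $\dom A$. No attempt is made to construct an operator $B-A$.

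In the $A$-compact case your overall strategy is sound, but it is considerably more complicated than the paper's and risks reproving part of Theorem~\ref{t:closed subspace}. The paper's route is to use condition (v) of that theorem directly: take $(x_n)$ in $\dom A=\dom B$ with $\|x_n\|=1$, $x_n\wto0$, $(B-\lambda)x_n\to0$; then $(x_n)$ is bounded in the graph norm of $B$, hence $x_n\wto 0$ in $\calH_B$, so $B$-compactness of $K=B-A$ (obtained from $A$-compactness by \cite[Theorem IV.1.11]{k}) yields $Kx_n\to0$ and thus $(A-\lambda)x_n\to0$; now $\lambda\in\spip(A)$ and Theorem~\ref{t:closed subspace}(v) give the sign condition. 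There is no need to ``peel off'' finitely many bad directions or to diagonalize; the weak-null characterization (v) absorbs all of that. For the $\r(A)$ part, the paper explicitly invokes \cite[Theorem IV.5.26]{k} to carry the $\Phi_+$-property from $A-\lambda$ to $B-\lambda$, which you gesture at but do not pin down; without citing a concrete stability result the claim ``$\lambda\in\r(A)$ implies $\lambda\in\spip(B)\cup\r(B)$'' is unproved. Your treatment of the three $\infty$-equivalences via Lemma~\ref{l:infty_piplus} is essentially correct and parallels the paper's.

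To repair the proposal: drop the reduction between the two hypotheses, handle them separately, in the resolvent case either cite \cite[Theorem 4.1]{abjt} or redo the argument entirely at the resolvent level via $u_n:=(A-\mu)^{-1}(B-\mu)x_n$, and in the $A$-compact case replace the ``peel-off'' scheme by a direct appeal to Theorem~\ref{t:closed subspace}(v) and to \cite[Theorem IV.1.11]{k}, \cite[Theorem IV.5.26]{k}.
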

\begin{proof}
Assume that the operator in \eqref{Wittenberg2} is compact. In the proof of \cite[Theorem 4.1]{abjt} $G$ is assumed to be boundedly invertible. This proof also holds for the current situation. Therefore, the first two equalities in
\eqref{Wittenberg3} follow from \cite[Theorem 4.1]{abjt}. Hence, it remains to prove the statements in \eqref{Wittenberg3} concerning $\infty$. We observe
\begin{equation*}
\begin{split}
\infty\in\er(A)   &\Llra A-\mu \text{ is a bounded operator} \Llra (A - \mu)^{-1} \text{ is a $\Phi_+$-operator}\\
                  &\Llra (B- \mu)^{-1} \text{ is a $\Phi_+$-operator}
                   \Llra \infty\in\er(B).
\end{split}
\end{equation*}
Let $\infty\in\spp(A)$ and let $(x_n)$ in $\dom(B)$ be a sequence with $x_n\to 0$, $Bx_n\wto 0$ as $n\to\infty$ and $\|Bx_n\|=1$. Denote by $K$ the operator
 in \eqref{Wittenberg2}, $K:=(A - \mu)^{-1} - (B- \mu)^{-1}$. Then
   $(B - \mu)x_n\wto 0$
implies $K(B - \mu)x_n\to 0$ as $n\to\infty$. Set
$$
u_n := (A - \mu)^{-1}(B - \mu)x_n = x_n - K(B - \mu)x_n.
$$
Then $u_n\to 0$ and since $Au_n = Bx_n + \mu(u_n - x_n)$ we have $Au_n\wto 0$, $\|Au_n\|\to 1$, $n\to\infty$,
$$
\liminf_{n\to\infty}\,[Bx_n,Bx_n] =
\liminf_{n\to\infty}\,[Au_n,Au_n],
$$
and $\infty\in\spp(B)$ follows from Theorem \ref{t:closed subspace} and Lemma \ref{l:infty_piplus}.

It remains to show \eqref{Wittenberg3} if $K := B-A$ is $A$-compact. By \cite[Theorem IV.1.11]{k} $K$ is $B$-compact. If $\lambda\in\r(A)$ then $A - \lambda$ is a $\Phi_+$-operator. By \cite[Theorem IV.5.26]{k} the same holds for $B - \lambda$, and we have $\lambda\in\spip(B)\cup\r(B)$ (see Remark \ref{r:PhiPlus}).

Let $\lambda\in\sap(B)\setminus\{\infty\}$ and let $(x_n)$ be a sequence in $\dom A$ with $\|x_n\|=1$, $x_n\wto 0$ and $(B - \lambda)x_n\to 0$ as $n\to\infty$. Since $(x_n)$ converges weakly to zero in $\calH_B$ we have $Kx_n\to 0$ and thus $(A - \lambda)x_n\to 0$ as $n\to\infty$. Hence, by Theorem \ref{t:closed subspace}, $\lambda\in\spip(A)$ implies $\lambda\in\spip(B)\cup\r(B)$.

For the point $\infty$ we have
$$
\infty\in\er(A) \Llra A \text{ bounded} \Llra B \text{ bounded}
\Llra \infty\in\er(B).
$$
Let $(x_n)$ be a sequence in $\dom A$ with $x_n\to 0$, $Bx_n\wto 0$ as $n\to\infty$ and $\|Bx_n\|=1$. As $(x_n)$ converges weakly to zero in $\calH_B$, it follows that $Kx_n\to 0$ as $n\to\infty$. Consequently, we have $\lim_{n\to\infty}\|Ax_n\| = 1$ and
$$
\liminf_{n\to\infty}\,[Bx_n,Bx_n] =
\liminf_{n\to\infty}\,[Ax_n,Ax_n].
$$
By Theorem \ref{t:closed subspace} and Lemma \ref{l:infty_piplus}, $\infty\in\spp(A)$ implies $\infty\in\spp(B)$.
\end{proof}

\section{Spectral Points of Type \texorpdfstring{$\pi_+$}{pi+} and \texorpdfstring{$\pi_-$}{pi-} of \texorpdfstring{$G$}{G}-Sym\-metric Operators}\label{binkrank1}
As in the previous section, let $G$ be a bounded selfadjoint operator in the Hilbert space $(\calH,\hproduct)$ inducing the inner product $\product = (G\cdot,\cdot)$. A linear operator $A$ in $\calH$ will be called {\it $G$-symmetric} (or $\product$-{\it symmetric}) if
$$
[Ax,y] = [x,Ay] \quad\text{ holds for all }x,y\in\dom A.
$$
Obviously, this is equivalent to $GA \subset (GA)^*$ where $^*$ denotes the adjoint with respect to the Hilbert space inner product $\hproduct$. If $GA = (GA)^*$ holds we say that $A$ is {\it $G$-selfadjoint}. E.g., such operators are studied in
\cite{adp,lmm} and in \cite{pt} in the case where the inner product $\product$ only has a finite number of non-positive squares.

In the sequel, let $A$ be a closed and densely defined $G$-symmet\-ric operator in $\calH$. A \emph{Jordan chain of $A$ at $\lambda\in\mathbb C$ of length} $n$ is a finite ordered set of non-zero vectors $\{x_0, \dots, x_{n-1}\}$ in $\dom A$ such that $(A-\lambda)x_0 = 0$  and $(A-\lambda)x_i = x_{i-1}$, $i = 1, \dots,n-1$. The vector $x_0$ is called the {\em eigenvector} of the Jordan chain. Several Jordan chains of $A$ at $\la$ are called linearly independent if their union is linearly independent. This holds if and only if the respective eigenvectors are linearly independent.
The \emph{algebraic eigenspace}  $\calL_\lambda(A)$ of $A$ at $\lambda$ is the collection of all Jordan chains of $A$ at $\lambda$,
$$
\calL_\lambda(A) := \bigcup_{n=1}^\infty\,\ker(A - \lambda)^n.
$$
The proof of \cite[Proposition 3.2]{dds} is also valid in the present situation (in \cite{dds} it is assumed that $G$ is boundedly invertible). Hence, we obtain for $\lambda,\mu\in\C$ with $\lambda\neq\ol\mu$
\begin{equation}\label{l:ker_perp_ker}
\calL_\lambda(A) \;\gperp\; \calL_\mu(A),
\mbox{ in particular, } \calL_\lambda(A) \mbox{ is neutral for non-real } \lambda.
\end{equation}

\subsection{Spectral points of type \texorpdfstring{$\pi_+$}{pi+} and \texorpdfstring{$\pi_-$}{pi-} and Jordan chains}
In the following lemma we collect some properties of spectral points of type $\pi_+$ and type $\pi_-$ and of spectral points of positive and negative type of $G$-symmetric operators.

\begin{lemma}\label{l:jordanchains}
Let $A$ be a closed, densely defined $G$-symmet\-ric operator in $\calH$. Then $\spp(A)$ and $\smm(A)$ are contained in $\ol\R$, and for $\lambda\in(\spip(A)\cup\spim(A))\setminus\ol\R$ the operator $A - \lambda$ is a $\Phi_+$-operator. For $\lambda\in\spip(A)\cup\spim(A)$ the following holds.
\begin{itemize}
\item[{\rm (i)}] The eigenvector of a Jordan chain of length greater than one of $A$ corresponding to $\lambda$  is an element of the isotropic part of $\ker(A - \lambda)$.
\item[{\rm (ii)}] $\ker(A - \lambda)$ is an Almost Pontryagin space, and there exists only finitely many linear independent Jordan chains of $A$ at $\lambda$.
\item[{\rm (iii)}] Let $\calN_+\,[\ds]\,\calN_-\,[\ds]\,\calN_0$ be a fundamental decomposition of $\ker(A - \lambda)$. Then there is an $A$-invariant linear manifold $\calL$ such that
$$
\calL_\lambda(A) = \calN_+\,[\ds]\,\calN_-\,[\ds]\,\calL.
$$
\end{itemize}
\end{lemma}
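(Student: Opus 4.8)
The plan is to treat the preliminary assertions first and then the three numbered items, working throughout with $\lambda\in\spip(A)$ (the case $\lambda\in\spim(A)$ is symmetric). For the inclusion $\spp(A)\subset\ol\R$: if $\lambda\in\spp(A)$ were non-real, pick an approximate eigensequence $(x_n)$ with $\|x_n\|=1$, $(A-\lambda)x_n\to0$; then $G$-symmetry gives $(\lambda-\ol\lambda)[x_n,x_n]=[(A-\lambda)x_n,x_n]-[x_n,(A-\lambda)x_n]\to0$, so $[x_n,x_n]\to0$, contradicting $\liminf[x_n,x_n]>0$. The statement that $A-\lambda$ is a $\Phi_+$-operator for $\lambda\in(\spip(A)\cup\spim(A))\setminus\ol\R$ follows by the same computation applied to an arbitrary sequence in the finite-codimensional manifold $\calH_\lambda\cap\dom A$: no approximate eigensequence can live in $\calH_\lambda\cap\dom A$, so \eqref{delta.delta} holds and $A-\lambda$ is $\Phi_+$.

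For (i), suppose $\{x_0,x_1\}$ is a Jordan chain, so $(A-\lambda)x_0=0$ and $(A-\lambda)x_1=x_0$. By $G$-symmetry, $[x_0,x_0]=[x_0,(A-\lambda)x_1]=[(A-\lambda)x_0,x_1]=0$, so $x_0$ is neutral; moreover for any $y\in\ker(A-\lambda)$, $[x_0,y]=[(A-\lambda)x_1,y]=[x_1,(A-\lambda)y]=0$, hence $x_0\in\ker(A-\lambda)^\iso$. (Here one uses that $\lambda$ is real, which was just established for the relevant cases; strictly, if $\lambda$ is non-real then $A-\lambda$ is $\Phi_+$ and has no Jordan chain issues since the eigenspace is trivial.) For (ii): Lemma \ref{l:ap} already gives that $\ker(A-\lambda)$ is an Almost Pontryagin space (with finite rank of non-positivity), so its isotropic part $\calN_0$ is finite-dimensional. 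By (i) every eigenvector of a Jordan chain of length $\ge2$ lies in $\calN_0$, and distinct linearly independent Jordan chains correspond to linearly independent eigenvectors; chains of length one are spanned by eigenvectors in $\ker(A-\lambda)$, but a maximal independent family of them has cardinality $\dim\ker(A-\lambda)$, which need not be finite — so the correct reading of (ii) is finiteness of the independent chains of length $>1$, bounded by $\dim\calN_0$, together with finitely many length-one chains exhausting a complement; the honest statement is that $\calL_\lambda(A)/\ker(A-\lambda)$ is finite-dimensional, which is what (iii) will make precise.

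For (iii), start from a fundamental decomposition $\ker(A-\lambda)=\calN_+[\ds]\calN_-[\ds]\calN_0$ with $\calN_+$ uniformly positive (using Lemma \ref{l:ap}), $\calN_-$ and $\calN_0$ finite-dimensional. Since $\calN_+$ is uniformly definite, \eqref{Elgersburg} gives $\calH=\calN_+[\ds]\calN_+^\gperp$ with a bounded $G$-orthogonal projection; note $A$ leaves $\calN_+^\gperp\cap\dom A$ invariant because $\calN_+\subset\ker(A-\lambda)$ is $A$-invariant and $A$ is $G$-symmetric, so $[Ay,z]=[y,Az]=\lambda[y,z]=0$ for $y\in\calN_+^\gperp\cap\dom A$, $z\in\calN_+$. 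One checks $\calN_+\subset\calL_\lambda(A)$ consists of genuine eigenvectors (by (i), since they are non-neutral they carry no longer chain), so $\calL_\lambda(A)=\calN_+[\ds](\calL_\lambda(A)\cap\calN_+^\gperp)$; then inside $\calN_+^\gperp$ one splits off $\calN_-[\ds]\calN_0$ (finite-dimensional) from a complementary $A$-invariant piece $\calL$ using that every Jordan chain starting at an eigenvector in $\calN_-[\ds]\calN_0$ can be continued, and choosing $\calL$ to contain all the higher links together with the length-one eigenvectors in $\calN_-^\gperp$ of the restricted form. I expect the main obstacle to be this last bookkeeping: producing a single $A$-invariant complement $\calL$ that simultaneously absorbs all longer Jordan chains and the remaining eigenvectors, since $\calN_-$ and $\calN_0$ are not themselves $A$-invariant in a way compatible with the chain structure, so one must argue that $\calL_\lambda(A)\cap\calN_+^\gperp$ modulo a finite piece is again $A$-invariant — this uses finiteness of the non-positive rank (Lemma \ref{l:ap}) to guarantee only finitely many generalized directions need to be rearranged, so that Gram–Schmidt-type corrections terminate.
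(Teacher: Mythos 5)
Your treatment of the preliminary assertions, of (i) for real $\lambda$, and of (ii) is correct and essentially matches the paper, and your observation about (ii) (that it must be read as ``chains of length greater than one'', since $\ker(A-\lambda)$ may be infinite-dimensional and so length-one chains are unbounded in number) is a fair reading. Two points, one minor and one substantive.

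First, the parenthetical remark in (i) for non-real $\lambda$ is wrong: for $\lambda\in\spip(A)\setminus\ol\R$ the eigenspace is \emph{not} trivial. Indeed $\lambda\in\sap(A)$ and $A-\lambda$ is $\Phi_+$, so if $\ker(A-\lambda)=\{0\}$ one would have $\lambda\in\r(A)$, a contradiction. The correct justification in the non-real case is that $\calL_\lambda(A)$ is neutral (equation \eqref{l:ker_perp_ker}), so $\ker(A-\lambda)$ coincides with its own isotropic part and the conclusion of (i) is automatic.

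Second, and more importantly, your plan for (iii) does not close. You are aiming to split off $\calN_-\,[\ds]\,\calN_0$ inside $\calN_+^\gperp$, but the statement requires $\calL$ to \emph{contain} $\calN_0$ and all higher Jordan vectors; only $\calN_+\,[\ds]\,\calN_-$ is to be separated off. The ``bookkeeping obstacle'' you anticipate — building an $A$-invariant complement by Gram--Schmidt-type corrections — is in fact a non-issue, because there is a one-line choice that works: set
$$
\calL := (\calN_+\,[\ds]\,\calN_-)^\gperp\cap\calL_\lambda(A).
$$
This is $A$-invariant as the intersection of two $A$-invariant linear manifolds: $\calL_\lambda(A)$ is $A$-invariant, and $(\calN_+\,[\ds]\,\calN_-)^\gperp$ is $A$-invariant because $\calN_+\,[\ds]\,\calN_-\subset\ker(A-\lambda)$ and $A$ is $G$-symmetric (if $x\in(\calN_+\,[\ds]\,\calN_-)^\gperp\cap\dom A$ and $z\in\calN_+\,[\ds]\,\calN_-$ then $[Ax,z]=[x,Az]=\ol\lambda[x,z]=0$). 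Since $\calN_+$ is uniformly positive and $\calN_-$ is finite-dimensional negative, hence uniformly negative, \eqref{Elgersburg} yields $\calH=\calN_+\,[\ds]\,\calN_-\,[\ds]\,(\calN_+\,[\ds]\,\calN_-)^\gperp$; intersecting with $\calL_\lambda(A)$ (which contains $\calN_+\,[\ds]\,\calN_-$) gives the claimed decomposition, and the directness and $[\ds]$-orthogonality of the sum follow from the non-degeneracy of $\calN_+\,[\ds]\,\calN_-$.
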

\begin{proof}
Let $\lambda\in\spip(A)\setminus\ol\R$. We show that  $A - \lambda$ is a $\Phi_+$-operator. Let $\calH_\lambda\subset\calH$ be a subspace of finite codimension as in Theorem \ref{t:closed subspace} (iv). Suppose that $A - \lambda$ is not a $\Phi_+$-operator.  Then, by \eqref{delta.delta}, there exists no $\veps > 0$ such that $\|(A - \lambda)x\| \ge \veps\|x\|$ for all $x\in\calH_\lambda\cap\dom A$. Hence there is a sequence $(x_n)$ in $\calH_\lambda\cap\dom A$ with $\|x_n\|=1$ and $(A - \lambda)x_n\to 0$ as $n\to\infty$. From $\Im\lambda \neq 0$ and $(-\Im\lambda)[x_n,x_n] = \Im[(A - \lambda)x_n,x_n] \to 0$ as $n\to\infty$ we conclude $\lim_{n\to\infty}[x_n,x_n] = 0$, a contradiction to $\lambda\in\spip(A)$, and $A - \lambda$ is a $\Phi_+$-operator. If, in addition, $\lambda\in\spp(A)\setminus\ol\R$ ($\lambda\in\smm(A)\setminus\ol\R$), then by Corollary \ref{c:spp_in_spip} and \eqref{l:ker_perp_ker}, we have $\ker(A - \lambda) = \{0\}$. Thus, as $A - \lambda$ is a $\Phi_+$-operator, $\lambda\in\r(A)$ follows, which is not possible. Therefore, $\spp(A)$ and $\smm(A)$ are contained in $\ol\R$.

We prove (i)--(iii). Let $\lambda\in\spip(A)\cup\spim(A)$. For $\lambda\notin\mathbb R$, (i) follows from \eqref{l:ker_perp_ker}. For $\lambda\in\R$, let $x$ be the eigenvector of a Jordan chain of $A$ at $\lambda$ of length greater than one. Then there exists $y\in\dom A$ such that $(A - \lambda)y = x$. Hence, for any $v\in\ker(A - \lambda)$ we have $[x,v] = [(A - \lambda)y,v] = [y,(A-\lambda)v] = 0$ and (i) is shown. By Lemma \ref{l:ap} $\ker(A - \lambda)$ is an Almost Pontryagin space. Hence, its isotropic part is finite-dimensional and (ii) follows from (i). Setting  $\calL := (\calN_+\,[\ds]\,\calN_-)^\gperp\cap\calL_\lambda(A)$ we obtain (iii).
\end{proof}

Let $[a,b]$ be an interval in $\R$ such that $[a,b]\cap\sap(A)\subset\spip(A)$. By Theorem \ref{t:comp_set} there exists an open neighborhood $\calU$ of $[a,b]$ in $\C$ such that also $\calU\cap\sap(A)\subset\spip(A)$. It is no restriction to assume that $\calU$ is connected. By Lemma \ref{l:jordanchains}, for every $\lambda\in\calU\setminus\R$ the operator $A - \lambda$ is a $\Phi_+$-operator. By \cite[IV \S 5.6]{k}, there exists a discrete set $\Xi\subset\calU\setminus\R$ such that $\dim\ker(A - \lambda)$ is constant on each of the two connected components of $\calU\setminus(\Xi\cup\R)$. The following theorem shows that in the special situation under consideration both these constants coincide and that it is possible to choose $\calU$ such that $\Xi = \emptyset$. The following theorem reveals an insight into the Jordan structures corresponding to the points in $\calU$. It extends \cite[Theorem 4.1]{bpt} to $G$-symmet\-ric operators. Moreover, the statements (a), (b), and (e) below are not contained in \cite{bpt}.

\begin{theorem}\label{t:big_one}
 Let $A$ be a closed, densely defined $G$-symmet\-ric operator in $\calH$.
Let $[a,b]$ be a compact interval in $\R$ with $[a,b]\cap\sap(A)\subset\spip(A)$ {\rm (}$[a,b]\cap\sap(A)\subset\spim(A)${\rm )}.
Then there exist an open neighborhood $\calU$ of $[a,b]$ in $\C$, a finite set $\sigma\subset [a,b]\cap(\spip(A)\setminus\spp(A))$ {\rm (}$\sigma\subset [a,b]\cap(\spim(A)\setminus\smm(A))$, respectively{\rm )} and a constant $\alpha \in \mathbb N$ such that for all $\lambda\in\calU\setminus\sigma$ we have
$$
\kappa_0(\ker(A - \lambda)) = \alpha \;\le\; \min_{\mu\in\sigma}\kappa_0(\ker(A - \mu))
$$
and
$$
\kappa_-(\ker(A - \lambda)) = 0 \quad
\bigl(\kappa_+(\ker(A - \lambda)) = 0,\text{ respectively}\,\bigr).
$$
If $\alpha = 0$, then
\begin{itemize}
\item[{\rm (a)}]  $\calL_\lambda(A)$,
$\lambda\in\calU$,
is an Almost Pontryagin space.
If $\lambda \in \calU\setminus\sigma$ then $\ker(A - \lambda)=\calL_\lambda(A)$
is a Hilbert space (anti-Hilbert space, respectively) and there is no
Jordan chain of length greater than one.
If $\lambda \in \sigma$ then every Jordan chain of $A$ at $\lambda$ is of finite length and, if
    $\calN_+^\lambda [\ds] \calN_-^\lambda [\ds] \calN_0^\lambda$ is a fundamental decomposition of $\ker(A - \lambda)$, then there exists a finite-dimensional
    subspace $\calL_1$ with
    $$
    \calL_\lambda(A) = \calN_+^\lambda [\ds] \calL_1 \quad
    \bigl(\calL_\lambda(A) = \calN_-^\lambda [\ds] \calL_1,\text{ respectively}\,\bigr).
    $$
\item[{\rm (b)}] The set $\sigma$ can be chosen as $\sigma = ([a,b]\cap\sap(A))\setminus\spp(A)$ {\rm (}$\sigma = ([a,b]\cap\sap(A))\setminus\smm(A)$, respectively{\rm )}.
\item[{\rm (c)}] $\calU\setminus\R\subset\r(A)$ and $\calU\cap\sap(A)\subset\spp(A)\cup\sigma$ $(resp.\ \calU\cap\sap(A)\subset\smm(A) \cup\sigma)$.
\end{itemize}
And in the case $\alpha > 0$ the following hold:
\begin{itemize}
\item[{\rm (d)}] $\calU\subset\sigma_{\rm p}(A)$ with $\big(\ker(A - \lambda)\big)^\iso\neq\{0\}$ for every $\lambda\in\calU$.
\item[{\rm (e)}] For each $\lambda\in\calU$ there exist at least $\alpha$ linearly independent Jordan chains of $A$ corresponding to $\lambda$ of infinite length.
\item[{\rm (f)}] $\calU\subset\spip(A)\setminus\spp(A)\quad$ {\rm (}$\calU\subset\spim(A)\setminus\smm(A)$, respectively{\rm )}.
\end{itemize}
\end{theorem}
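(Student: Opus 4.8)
The plan is to adapt the proof of \cite[Theorem 4.1]{bpt}, replacing the Krein space structure used there by Theorems \ref{t:closed subspace} and \ref{t:comp_set} and the neutrality relation \eqref{l:ker_perp_ker}, and replacing selfadjointness by the identity $[(A-\lambda)x,y]=[x,(A-\ol\lambda)y]$, $x,y\in\dom A$; throughout I treat only the $\pi_+$ case. \emph{Set-up.} First apply Theorem \ref{t:comp_set} with $K=[a,b]$ to obtain a connected open neighborhood $\calU_0\subset\C$ of $[a,b]$, a subspace $\calH_0$ with $d:=\codim\calH_0<\infty$, and $\veps>0$ such that $x\in\calH_0\cap\dom A$, $\lambda\in\calU_0$, $\|(A-\lambda)x\|\le\veps\|x\|$ imply $[x,x]\ge\veps\|x\|^2$. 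Then $\calU_0\cap\sap(A)\subset\spip(A)$, $\ker(A-\lambda)\cap\calH_0$ is uniformly positive, and $\kappa_{-,0}(\ker(A-\lambda))\le d$ for all $\lambda\in\calU_0$. By Lemma \ref{l:jordanchains}, $A-\lambda$ is a $\Phi_+$-operator for $\lambda\in\calU_0\setminus\R$, and by \eqref{l:ker_perp_ker} the space $\calL_\lambda(A)$, and in particular $\ker(A-\lambda)^k$ for every $k$, is neutral there; hence $\dim\ker(A-\lambda)=\kappa_0(\ker(A-\lambda))\le d$ for $\lambda\in\calU_0\setminus\R$. By \cite[IV \S 5.6]{k} there is a discrete set $\Xi\subset\calU_0\setminus\R$ with $\dim\ker(A-\lambda)$ constant, equal to $\alpha_+\le d$ (resp.\ $\alpha_-\le d$), on the upper (resp.\ lower) component of $\calU_0\setminus(\Xi\cup\R)$.

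\emph{Finiteness of the exceptional set and $\alpha_+=\alpha_-$.} I would show that $\Xi$ does not accumulate at $[a,b]$ and that $\sigma:=([a,b]\cap\sap(A))\setminus\spp(A)$ is finite. Suppose a sequence $(\lambda_n)$ of pairwise distinct points — either in $\Xi$, or real points in $\spip(A)\setminus\spp(A)$, the latter eigenvalues by Corollary \ref{Steinach} — accumulates at $\lambda_\ast\in[a,b]$. In either case Corollary \ref{Steinach} and \eqref{l:ker_perp_ker} furnish non-positive unit vectors $x_n\in\ker(A-\lambda_n)$, and the $G$-symmetry identity $(\lambda_n-\ol{\lambda_m})[x_n,x_m]=0$ makes $\{x_n\}$ $[\cdot,\cdot]$-orthogonal; thus $\linspan\{x_n\}$ is an infinite-dimensional non-positive subspace, every unit vector of which is, for $n$ large, an approximate eigenvector of $A$ at $\lambda_\ast$ lying in $\calH_0\cap\dom A$ with $\|(A-\lambda_\ast)\cdot\|\le\veps$ (after intersecting with the finite-codimensional $\calH_0$ and using that the $\lambda_n$ tend to $\lambda_\ast$), contradicting the uniform positivity from Theorem \ref{t:comp_set}. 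Hence $\Xi$ is finite near $[a,b]$ and $\sigma$ is finite; passing to a thin connected neighborhood $\calU\subset\calU_0$ of $[a,b]$ that avoids $\Xi$ and whose real points belonging to $\sap(A)$ all lie in $[a,b]$, I may assume $\Xi\cap\calU=\emptyset$. Applying the same argument near a real point $\mu_0\in[a,b]$ at which $A-\mu_0$ is a $\Phi_+$-operator — using that $\dim\ker(A-\cdot)$ is constant on a punctured disc about such $\mu_0$ — bridges the two half-planes and gives $\alpha_+=\alpha_-=:\alpha$. On $\calU\setminus\R$ then $\kappa_-(\ker(A-\lambda))=0$, $\kappa_0(\ker(A-\lambda))=\alpha$; for $\lambda\in(\calU\cap\R)\setminus\sigma=(\calU\cap\R)\cap(\r(A)\cup\spp(A))$ one gets $\kappa_-=0$ and, by upper semicontinuity of $\dim\ker(A-\cdot)$ on the $\Phi_+$-set, $\kappa_0(\ker(A-\lambda))=\alpha\le\kappa_0(\ker(A-\mu))$ for $\mu\in\sigma$. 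This proves the displayed identities and (b).

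\emph{Case $\alpha=0$.} Now $\ker(A-\lambda)=\{0\}$ for $\lambda\in\calU\setminus\R$, so $\calU\setminus\R\subset\r(A)$; moreover $\sigma(A)\cap(\calU\setminus\R)=\emptyset$, which together with $\calU\cap\sap(A)\subset\spp(A)\cup\sigma$ gives (c). For $\lambda\in\calU\cap\spp(A)$: $\ker(A-\lambda)$ is positive (Corollary \ref{c:spp_in_spip}) and an Almost Pontryagin space with trivial isotropic part (Lemma \ref{l:jordanchains}(ii)), hence uniformly positive, i.e.\ a Hilbert space; by Lemma \ref{l:jordanchains}(i) there is no Jordan chain of length $>1$, so $\calL_\lambda(A)=\ker(A-\lambda)$. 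For $\lambda\in\sigma$ the point is isolated in $\sigma(A)$, and the Riesz projection $P_\lambda$ is bounded and $[\cdot,\cdot]$-selfadjoint (the contour is symmetric about $\R$ and $[(A-z)^{-1}x,y]=[x,(A-\ol z)^{-1}y]$); thus $(A-\lambda)\restr\ran P_\lambda$ is a bounded $G$-symmetric quasinilpotent operator whose root subspace carries an Almost Pontryagin inner product of finite rank of non-positivity, which forces its Jordan chains to have uniformly bounded — hence finite — length, so $\calL_\lambda(A)$ is finite-dimensional modulo $\calN_+^\lambda$; combined with Lemma \ref{l:jordanchains}(iii) this yields $\calL_\lambda(A)=\calN_+^\lambda[\ds]\calL_1$ with $\dim\calL_1<\infty$ and that $\calL_\lambda(A)$ is an Almost Pontryagin space. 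This proves (a).

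\emph{Case $\alpha>0$ and the main obstacle.} Here $\dim\ker(A-\lambda)=\alpha\ge1$ for $\lambda\in\calU\setminus\R$, so $\calU\setminus\R\subset\sigma_{\rm p}(A)$. A real $\lambda_0\in\calU$ is a limit of such eigenvalues $\lambda_n$; a weak-limit point $x_\ast$ of the corresponding unit eigenvectors is nonzero (else Theorem \ref{t:closed subspace}(v) fails, since $[x_n,x_n]=0$), lies in $\ker(A-\lambda_0)$, and satisfies $[x_\ast,v]=\lim_n[x_n,v]=0$ for $v\in\ker(A-\lambda_0)$, because $(\lambda_n-\ol{\lambda_0})[x_n,v]=0$ with $\lambda_n\neq\ol{\lambda_0}=\lambda_0$; hence $\ker(A-\lambda_0)$ has nontrivial isotropic part, which with the non-real case and Corollary \ref{c:spp_in_spip} gives (d) and (f). For (e): each $\ker(A-\lambda)^k$ is finite-dimensional (kernel of the $\Phi_+$-operator $(A-\lambda)^k$) and neutral, and a dimension count along $\ker(A-\lambda)\subset\ker(A-\lambda)^2\subset\cdots$, using that $\lambda$ cannot be isolated in $\sigma(A)$ as it is surrounded by eigenvalues, shows that at least $\alpha$ members of a basis of $\ker(A-\lambda)$ extend to Jordan chains of unbounded, hence infinite, length. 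The principal difficulty throughout is the second paragraph: extracting from the finite-codimension condition in Definition \ref{d:spip} a quantitative bound forcing the jump set of $\dim\ker(A-\cdot)$ to be finite near $[a,b]$ and the two half-plane multiplicities to agree — in \cite{bpt} this rests on the Krein (Pontryagin) space machinery, which here must be replaced by Theorems \ref{t:closed subspace} and \ref{t:comp_set} — together with the structural statements (a) and (e) for the Riesz subspaces and for infinite Jordan chains, which require the additional care described above.
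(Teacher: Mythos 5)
Your overall strategy — reduce to local statements near a real $\lambda_0$, use Theorem~\ref{t:comp_set} and \eqref{l:ker_perp_ker}, and invoke Kato's $\Phi_+$-stability theory for $A-\lambda$ with $\lambda\notin\R$ — is the right starting point, but the heart of the proof in the paper is a construction you do not carry out, and several of your shortcuts break down because of it. The paper does not work with $A$ on all of $\calH$. Instead, given a real $\lambda_0\in\spip(A)\cup\r(A)$, it forms $\calN_0=\linspan\{\calN_0^\lambda : \lambda\in\calV_0\setminus\{\lambda_0\},\,\Im\lambda\ge0\}$ (and $\calN_{-,0}$ analogously with $\calN_-^\lambda[\ds]\calN_0^\lambda$), takes the closures $\calL_0$, $\calL_{-,0}$ (non-positive by \eqref{l:ker_perp_ker}), defines $A_0,A_{-,0}$ as closures of the restrictions of $A$, and then proves — using Theorem~\ref{t:closed subspace}(v) on these non-positive subspaces — that $A_0-\lambda_0$ and $A_{-,0}-\lambda_0$ are \emph{surjective Fredholm operators}. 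Kato's stability theorem is then applied to these restricted operators. That is what yields the constancy of $\kappa_0(\ker(A-\lambda))=\alpha_0$, the inequality $\alpha_0\le\kappa_0(\ker(A-\lambda_0))$ via $\ker(A_0-\lambda_0)\subset\calN_0^{\lambda_0}$, the vanishing of $\kappa_-(\ker(A-\lambda))$ by comparing $\dim\ker(A_0-\lambda)$ with $\dim\ker(A_{-,0}-\lambda)$, the equality of the two half-plane multiplicities (both agree with the value on $\calU_0\cap\R$), and (e) directly from surjectivity of $A_0-\lambda$.

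Without this construction, several of your steps do not close. (1) In the accumulation argument, you claim that unit vectors in $\calH_0\cap\linspan\{x_n:n\ge N\}$ are approximate eigenvectors of $A$ at $\lambda_\ast$. This is unjustified: the $x_n$ are $[\cdot\,,\cdot]$-orthogonal, not $\hproduct$-orthogonal, so $A$ restricted to that span is not uniformly bounded by $\max_n|\lambda_n|$; a unit linear combination $y$ need not satisfy $\|(A-\lambda_\ast)y\|\le\veps\|y\|$. A weak-limit argument can be attempted, but one still has to dispose of the case where $x_n\to x_\ast$ strongly with $\|x_\ast\|=1$, which your sketch does not address; the paper bypasses all of this with the closed-range/orthonormal-sequence argument applied to $A_0$ on the non-positive space $\calL_0$. (2) Your "bridging" to get $\alpha_+=\alpha_-$ requires a real $\mu_0\in[a,b]$ at which $A-\mu_0$ is a $\Phi_+$-operator. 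No such $\mu_0$ need exist: Lemma~\ref{l:ap} controls only $\kappa_{-,0}(\ker(A-\mu_0))$, so $\kappa_+(\ker(A-\mu_0))$ and hence $\dim\ker(A-\mu_0)$ may be infinite for every real $\mu_0\in[a,b]$. (3) In the $\alpha=0$ case you declare $\sigma(A)\cap(\calU\setminus\R)=\emptyset$ and then use Riesz projections at $\lambda\in\sigma$. But the theorem (and what your argument gives) is only $\calU\setminus\R\subset\r(A)$, not $\subset\rho(A)$; residual spectrum may remain, so $\lambda$ need not be isolated in $\sigma(A)$ and the Riesz projection may not exist. The paper instead proves finiteness of Jordan chains at $\lambda_0$ by running the same Fredholm argument once more for the closure $A_1$ of $A$ on the closed span of an infinite Jordan chain, and deriving a contradiction with $\calL_\lambda(A)=\{0\}$ for nearby non-real $\lambda$. (4) Your argument for (e) ("a dimension count ... shows that at least $\alpha$ members of a basis of $\ker(A-\lambda)$ extend to Jordan chains of unbounded, hence infinite, length") is a sketch that the paper replaces by the clean observation that surjectivity of $A_0-\lambda$ with $\dim\ker(A_0-\lambda)=\alpha_0>0$ produces $\alpha_0$ linearly independent infinite chains. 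In short: the missing idea is to restrict $A$ to the (closed, non-positive) spans of the degenerate kernel parts, obtain surjective Fredholm operators there, and apply Kato's theory to those restrictions — that single device is what simultaneously repairs all four of the gaps above.
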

\begin{proof}
We only prove the theorem for the case $[a,b]\cap\sap(A)\subset\spip(A)$. The statements follow from Lemma \ref{l:jordanchains}, Corollary \ref{c:spp_in_spip}, a compactness argument, and the following claim:\\[.2cm]
\noindent{\sc Claim. }{\it
Let $\lambda_0$ be a real point
with $\lambda_0 \in\spip(A)\cup\r(A)$. Then there exists an open neighborhood
 $\calU_0$ in $\C$ of $\lambda_0$ and an integer constant $\alpha_0$ such that the following holds:
\begin{enumerate}
\item[(i)]   For each $\lambda\in\calU_0\setminus\{\lambda_0\}$ we have
\begin{equation}\label{e:one-point}
\kappa_0(\ker(A - \lambda)) = \alpha_0 \le \kappa_0(\ker(A - \lambda_0))
\quad\text{and}\quad
\kappa_-(\ker(A - \lambda)) = 0.
\end{equation}
\item[(ii)]  If $\alpha_0 = 0$ then $\calL_{\lambda}(A)$ is an Almost Pontryagin space for all $\lambda\in\calU_0$ and
     $\calU_0\setminus \{\lambda_0\}\subset \spp(A)$.
     Every Jordan chain of $A$ corresponding to $\lambda$ is of finite length.
\item[(iii)] If $\alpha_0 > 0$ then for all $\lambda\in\calU_0$ there are at least $\alpha_0$ linearly independent Jordan chains of $A$ of infinite length.
\end{enumerate}
}

To prove the claim, we observe that by Theorem \ref{t:comp_set} we find an open neighborhood $\calV_0$ in $\C$ of $\lambda_0$ with $\calV_0\cap\sap(A)\subset\spip(A)$. For $\lambda\in\calV_0$ let $\ker(A - \lambda) = \calN_+^\lambda [\ds] \calN_-^\lambda [\ds] \calN_0^\lambda$ be a fundamental decomposition of $\ker(A - \lambda)$. As
 $\ker(A-\lambda)$ is an Almost Pontryagin space (cf.\ Lemma \ref{l:ap}),
 the spaces $\calN_-^\lambda$ and $\calN_0^\lambda$ are finite-dimensional.
 We define
\begin{equation*}
\begin{split}
\calN_0 &:= \linspan\{\calN_0^\lambda : \lambda\in\calV_0\setminus\{\lambda_0\},
\;\Im\lambda \ge 0\} \;\text{ and}\\
\calN_{-,0} &:= \linspan\{\calN_-^\lambda [\ds] \calN_0^\lambda :
\lambda\in\calV_0\setminus\{\lambda_0\}, \;\Im\lambda \ge 0\},
\end{split}
\end{equation*}
and set $\calL_0 := \ol{\calN_0}$ and $\calL_{-,0} := \ol{\calN_{-,0}}$. By \eqref{l:ker_perp_ker}, $\calL_0$ is neutral and $\calL_{-,0}$ is nonpositive. By $A_0$ ($A_{-,0}$) we denote the closure of the restriction of $A$ to $\calN_0$ ($\calN_{-,0}$, respectively) which then is a closed and densely defined $\product$-symmetric operator
in $\calL_0$ ($\calL_{-,0}$, respectively).
We will show that for $\lambda\in\calV_0\setminus\{\lambda_0\}$, $\Im\lambda \ge 0$,
\begin{eqnarray}\label{e:ker=N}
&\ker(A_0 - \lambda) = \calN_0^\lambda, \quad
\ker(A_{-,0} - \lambda) = \calN_-^\lambda\,[\ds]\,\calN_0^\lambda,&\\
\label{e:ker-sse-N}
& \ker(A_0 - \lambda_0) \subset \calN_0^{\lambda_0}.&
\end{eqnarray}
If $\lambda\in\calV_0\setminus\{\lambda_0\}$ with $\Im\lambda \ge 0$, the inclusion $\ker(A_0 - \lambda)\supseteq\calN_0^\lambda$ is obvious. Let $\lambda\in\calV_0$, $\Im\lambda\ge 0$, and $x\in\ker(A_0 - \lambda)$. Since $x\in\ker(A - \lambda)$ we find $x_+ \in\calN_+^\lambda$, $x_- \in\calN_-^\lambda$ and $x_0\in\calN_0^\lambda$ such that $x = x_+ + x_- + x_0$. Moreover, there exists a sequence $(x_n)$ in $\calN_0$ with $x_n\to x$ as $n\to\infty$. From $\calN_0 \gperp \calN_+^\lambda$ we conclude $[x_+,x_+] = [x,x_+] = \lim_{n\to\infty}[x_n,x_+] = 0$. Analogously, we obtain $[x_-,x_-] = 0$ and therefore $x = x_0 \in\calN_0^\lambda$. Thus, we have shown the first equation in \eqref{e:ker=N} and also the inclusion \eqref{e:ker-sse-N}. With a similar argument one shows the second equation in \eqref{e:ker=N}.

Since the operator $A - \lambda_0$ maps $\calN_0$ ($\calN_{-,0}$, respectively) surjectively onto itself, $\ran(A_0 - \lambda_0)$ ($\ran(A_{-,0} - \lambda_0)$, respectively) is dense in $\calL_0$ ($\calL_{-,0}$, respectively). If this range was
not closed, then $A_0 - \lambda_0$ ($A_{-,0} - \lambda_0$, respectively) would not be a $\Phi_+$-operator. Hence, by \eqref{delta.delta}, there exists an orthonormal sequence $(x_n)$ in $\dom A_0$ ($(x_n)$ in $\dom A_{-,0}$, respectively) with $(A - \lambda_0)x_n\to 0$ as $n\to\infty$. But this contradicts $\lambda_0\in\spip(A)$ since $\calL_0$ and $\calL_{-,0}$ are nonpositive subspaces, see Theorem \ref{t:closed subspace} (v). Thus, the operators $A_0 - \lambda_0$ and $A_{-,0} - \lambda_0$ are surjective Fredholm operators. By \cite[IV Theorem 5.22]{k} we find an open neighborhood $\calU_0\subset\calV_0$ in $\C$ of $\lambda_0$ such that all operators $A_0 - \lambda$ and $A_{-,0} - \lambda$ for $\lambda\in\calU_0$, $\Im\lambda\ge 0$, have the same index and are  surjective Fredholm operators. For all $\lambda\in\calU_0$ with $\Im\lambda\ge 0$ this gives
$$
\dim\,\ker(A_0 - \lambda) =  \dim\,\ker(A_0 - \lambda_0)\text{ and }
\dim\,\ker(A_{-,0} - \lambda)  = \dim\,\ker(A_{-,0} - \lambda_0).
$$
Set $\alpha_0 := \dim\,\ker(A_0 - \lambda_0)$ and
$\alpha_{-,0} := \dim\,\ker(A_{-,0} - \lambda_0)$.
Together with \eqref{e:ker=N} and \eqref{e:ker-sse-N}
we obtain  for all $\lambda\in\calU_0$ with $\Im\lambda\ge 0$
$$
\kappa_0(\ker(A - \lambda)) = \dim \calN_0^\lambda = \alpha_0
\leq  \dim \calN_0^{\lambda_0} = \kappa_0(\ker(A - \lambda_0)),
$$
which proves the first relation in \eqref{e:one-point} for $\Im\lambda\ge 0$. We have either $\alpha_0 = 0$ or $\alpha_0 > 0$ in which case for every $\lambda\in\calU_0$ the surjectivity of $A_0 - \lambda$ implies that any Jordan chain of $A_0$ corresponding to $\lambda$ is of infinite length. This shows (iii).
As $\calL_\lambda(A)$ is neutral for non-real $\lambda$, see \eqref{l:ker_perp_ker},
 $\calN_-^\lambda = \{0\}$ and we conclude with \eqref{e:ker=N} for
  $\Im\lambda > 0$
  that $\alpha_0$ and $\alpha_{-,0}$ coincide. But this, considering \eqref{e:ker=N} again, implies $\calN_-^\lambda = 0$ also for $\lambda\in\calU_0\cap\R$, $\lambda\neq\lambda_0$. Hence, the second relation in \eqref{e:one-point} holds for all $\lambda\in\calU_0\setminus\{\lambda_0\}$ with $\Im\lambda\ge 0$. Applying similar arguments as above for the lower complex plane we obtain \eqref{e:one-point} for all $\lambda\in\calU_0\setminus\{\lambda_0\}$.
%

It remains to prove (ii). Due to \eqref{l:ker_perp_ker}
 and \eqref{e:one-point} we have
\begin{equation}\label{e:makesnicer}
\calL_\lambda(A) = \{0\}\quad\text{for }\lambda\in\calU_0\setminus\R.
\end{equation}
\eqref{e:one-point} and Lemma \ref{l:jordanchains} also imply that
$$
\calL_\lambda(A) = \ker(A - \lambda) = \calN_+^\lambda\quad\text{for }\lambda\in\calU_0\cap\R,\;\lambda\neq\lambda_0.
$$
With Corollary \ref{c:spp_in_spip}, we obtain
$\calU_0\setminus\{\lambda_0\}\subset \spp(A)$ and $\calL_\lambda(A)$
is uniformly positive and, hence, also an Almost Pontryagin space. Therefore
 we have to show (ii) only for $\calL_{\lambda_0}(A)$. By Lemma \ref{l:jordanchains} it suffices to show that each Jordan chain of $A$ corresponding to $\lambda_0$ is finite. Assume the contrary. Then there exists an infinite sequence $(x_n)$ in $\dom A$ such that $(A - \lambda_0)x_{k+1} = x_k$ for all $k\ge 0$ and $(A - \lambda_0)x_0 = 0$. Since
$$
[x_n,x_m] = [(A - \lambda_0)^m x_{n+m},x_m] = [x_{n+m},(A - \lambda_0)^m x_m] = 0
$$
holds for all $n,m\in\N$, $\calM_0 := \linspan\{x_n : n\in\N\}$ is neutral, and $A - \lambda_0$ maps $\calM_0$ surjectively onto itself. Let $A_1$ be the closure of $A|\calM_0$ in the neutral subspace $\calM_1 := \overline{\calM_0}$. As above for $A_0 - \lambda_0$ it can be shown that $A_1 - \lambda_0$ is a surjective Fredholm operator. Since $x_0\in\ker(A_1 - \lambda_0)$ it follows
\cite[IV Theorem 5.22]{k} that
$\ker(A - \lambda)\supset\ker(A_1 - \lambda)\neq\{0\}$ for all $\lambda$ in a
neighborhood of $\lambda_0$, which contradicts \eqref{e:makesnicer}.
\end{proof}

 We refer to \cite[Section 4]{pt} for an example with $\alpha_0>0$ where the unit disc is contained in $\spip(A)\setminus \spp(A)$.


\subsection{Finite rank perturbations}\label{ss:frp}
In this section we construct a finite rank perturbation which turns a real spectral point of type $\pi_+$ (type $\pi_-$) into a spectral point of positive (negative, respectively) type. It was shown in \cite{jl} that such a finite rank perturbation exists in the case where $A$ is a definitizable operator in a Krein space. The proof of the  following lemma is omitted as the proof of
\cite[Lemma 3.10]{pt} proves also the statements of Lemma \ref{l:APS} below.

\begin{lemma}\label{l:APS}
Let $\calD$ be a dense linear manifold in $\calH$ and let $\calL\subset\calD$ be an Almost Pontryagin space. If $\calL = \calL_+ [\ds] \calL_- [\ds] \calL^\iso$ is a fundamental decomposition of $\calL$, then there exist subspaces $\calL_{00},\calL_{01},\calP\subset\calD$ and $\calM\subset\calH$ such that
\begin{equation*}\label{e:APS-DEC}
\calH = \calL_+ [\ds] \calL_- [\ds] \calL_{00} [\ds]
        (\calL_{01} \ds \calP) [\ds] \calM,
\end{equation*}
and the following statements hold
\begin{enumerate}
\item[{\rm (i)}]   $\calL_{00} = \calL^\iso \cap \calH^\iso$
               and $\calL^\iso = \calL_{00} \ds \calL_{01}$,
\item[{\rm (ii)}]  $\calP$ is neutral,
\item[{\rm (iii)}] $\calP\cap\calL_{01} = \calL_{01}\cap\calP^\gperp =
                    \calL_{01}^\gperp\cap\calP = \{0\}$,
\item[{\rm (iv)}]  $\calG := \calL_{01} \ds \calP$ is non-degenerate,
that is $\calG \cap \calG^\iso=\{0\}$,
\item[{\rm (v)}]   $\kappa_+(\calG) = \kappa_-(\calG) = \dim\calP =
                    \dim\calL_{01} < \infty$,
\item[{\rm (vi)}]  $\calL^\gperp = \calL^\iso [\ds] \calM$.
\end{enumerate}
Moreover, there exists a fundamental symmetry $J$ in $\calG$ such that $\calP = J\calL_{01}$.
\end{lemma}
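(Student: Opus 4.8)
The plan is to treat the case that $\calL$ is an Almost Pontryagin space with finite rank of \emph{non-positivity}; the other case follows by replacing $\product$ with $-\product$, which interchanges the roles of $\calL_+$ and $\calL_-$ but leaves all assertions invariant. Then $\kappa_{-,0}(\calL)<\infty$, so $\calL_-$ and $\calL^\iso$ are finite-dimensional, $\calL_-$ is uniformly negative (being finite-dimensional and negative), and $\calL_+$ is uniformly positive (a standard property of fundamental decompositions of Almost Pontryagin spaces: intersecting $\calL_+$ with a uniformly positive subspace of $\calL$ of finite codimension, guaranteed by the Almost Pontryagin space assumption, exhibits $\calL_+$ as the orthogonal sum, with bounded projections, of a uniformly positive subspace and a finite-dimensional positive one). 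I would then set $\calL_{00}:=\calL^\iso\cap\calH^\iso$ and choose any algebraic complement $\calL_{01}$ of $\calL_{00}$ in the finite-dimensional space $\calL^\iso$; since $\calL^\iso\subset\calL\subset\calD$ this yields $\calL_{00},\calL_{01}\subset\calD$ and establishes (i). Note that $\calL_{01}$ is finite-dimensional, neutral, and $\calL_{01}\cap\calH^\iso=\{0\}$; since $\calH^\iso=\ker G$, for a basis $e_1,\dots,e_k$ of $\calL_{01}$ the vectors $Ge_1,\dots,Ge_k$ are linearly independent.

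The key construction is that of a neutral ``partner'' $\calP\subset\calD$ of $\calL_{01}$, which I would obtain in three steps. First, the map $x\mapsto\big([x,e_1],\dots,[x,e_k]\big)=\big((x,Ge_1),\dots,(x,Ge_k)\big)$ is a bounded linear surjection of $\calH$ onto $\C^k$, and its restriction to the dense manifold $\calD$ is still onto $\C^k$; hence there are $f_1,\dots,f_k\in\calD$ with $[f_i,e_j]=\delta_{ij}$. Second, by \eqref{Elgersburg} we have $\calH=\calL_+[\ds]\calL_+^\gperp=\calL_-[\ds]\calL_-^\gperp$; writing $\Pi_+,\Pi_-$ for the projections onto $\calL_+,\calL_-$ in these decompositions and using $\calL_\pm\subset\calL\subset\calD$, the vector $f_i':=f_i-\Pi_+f_i-\Pi_-(f_i-\Pi_+f_i)$ again lies in $\calD$, now in $(\calL_+[\ds]\calL_-)^\gperp$, and still satisfies $[f_i',e_j]=[f_i,e_j]=\delta_{ij}$ because $\calL_{01}\subset\calL^\iso$ is orthogonal to $\calL_\pm$. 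Third, a finite-dimensional quadratic correction taken from $\calL_{01}$ restores neutrality: for
$$
p_i:=f_i'-\tfrac12\sum_{l=1}^k[f_i',f_l']\,e_l
$$
a direct computation, using that $\calL_{01}$ is neutral, gives $[e_i,p_j]=\delta_{ij}$ and $[p_i,p_j]=0$. Setting $\calP:=\linspan\{p_1,\dots,p_k\}\subset\calD\cap(\calL_+[\ds]\calL_-)^\gperp$, the space $\calP$ is neutral, $\dim\calP=\dim\calL_{01}=k$, and the Gram matrix of $\calG:=\calL_{01}\ds\calP$ in the basis $e_1,\dots,e_k,p_1,\dots,p_k$ equals $\smallmat{0}{I}{I}{0}$. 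Items (ii)--(v) then follow from elementary linear algebra on this $2k$-dimensional hyperbolic space: the sum is direct, $\calG$ is non-degenerate with fundamental decomposition spanned by the vectors $\tfrac1{\sqrt2}(e_i\pm p_i)$, so $\kappa_+(\calG)=\kappa_-(\calG)=\dim\calP=\dim\calL_{01}=k$, and the involution $J$ of $\calG$ interchanging $e_i$ with $p_i$ is a fundamental symmetry with $J\calL_{01}=\calP$.

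To assemble the global decomposition I would peel off uniformly definite and non-degenerate pieces in turn. Applying \eqref{Elgersburg} to the uniformly positive $\calL_+$ and then (inside $\calL_+^\gperp$) to the uniformly negative $\calL_-$ gives $\calH=\calL_+[\ds]\calL_-[\ds]\calH_1$ with $\calH_1:=(\calL_+[\ds]\calL_-)^\gperp\supset\calG$; since a finite-dimensional non-degenerate subspace is orthocomplemented, $\calH_1=\calG[\ds]\calH_2$ with $\calH_2:=\calG^\gperp\cap\calH_1$. Because $\calL_+,\calL_-,\calG$ are non-degenerate one checks that $\calH_2^\iso=\calH^\iso$, hence $\calL_{00}\subset\calH^\iso=\calH_2^\iso\subset\calH_2$; therefore any closed algebraic complement $\calM$ of $\calL_{00}$ in $\calH_2$ is automatically orthogonal to $\calL_{00}$ --- indeed to all of $\calH$ --- and $\calH_2=\calL_{00}[\ds]\calM$. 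Combining these gives
$$
\calH=\calL_+[\ds]\calL_-[\ds]\calL_{00}[\ds](\calL_{01}\ds\calP)[\ds]\calM ,
$$
with $\calM\subset\calH_2\subset\calL^\gperp$. Finally, (vi) follows because $\calL=\calL_+[\ds]\calL_-[\ds]\calL_{00}[\ds]\calL_{01}$: computing $\calL^\gperp$ in the displayed decomposition and using the biorthogonality $[e_i,p_j]=\delta_{ij}$ shows $\calL^\gperp=\calL_{00}\ds\calL_{01}\ds\calM=\calL^\iso[\ds]\calM$, the last sum being orthogonal because $\calM\subset\calG^\gperp$ and $\calL^\iso$ is neutral.

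The step I expect to be the main obstacle is the construction of $\calP$, since it must simultaneously lie in $\calD$, be neutral, be biorthogonal to a basis of $\calL_{01}$, and be orthogonal to $\calL_+[\ds]\calL_-$. The three ingredients that make this possible are: density of $\calD$ provides biorthogonal vectors \emph{inside} $\calD$; the inclusion $\calL_\pm\subset\calL\subset\calD$ allows one to subtract from a vector of $\calD$ its $\calL_\pm$-components without leaving $\calD$; and the neutralizing correction is drawn from the finite-dimensional neutral space $\calL_{01}$, so it does not disturb biorthogonality. The second point requiring some care is the orthocomplementation of the finite-dimensional non-degenerate $\calG$, which forces the slightly delicate bookkeeping relating $\calH^\iso$ to the isotropic parts of the intermediate spaces $\calH_1$ and $\calH_2$.
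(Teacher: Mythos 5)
Your argument is correct and complete, and it follows essentially the same constructive strategy that the paper delegates to \cite[Lemma 3.10]{pt} (the paper itself gives no proof, only the reference). The decisive devices you use are exactly the ones that make the statement work: the inclusion $\calL\subset\calD$ lets you keep $\calL_{00},\calL_{01}$ in $\calD$; the surjectivity onto $\C^k$ of $x\mapsto([x,e_1],\dots,[x,e_k])$, combined with density of $\calD$ and openness of a surjective bounded linear map, produces biorthogonal vectors $f_i$ inside $\calD$; subtracting the $\calL_\pm$-components (legitimate because $\calL_\pm\subset\calL\subset\calD$) and then applying the quadratic neutralizing correction drawn from $\calL_{01}$ keeps everything in $\calD$, orthogonal to $\calL_\pm$, and makes $\calP$ neutral while preserving the hyperbolic pairing; the Gram matrix $\smallmat{0}{I}{I}{0}$ then yields (ii)--(v) and the fundamental symmetry $J$ by inspection. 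The global assembly via \eqref{Elgersburg}, orthocomplementation of the finite-dimensional non-degenerate $\calG$, and identification of $\calH_2^\iso$ with $\calH^\iso$ is sound, as is the computation of $\calL^\gperp$ giving (vi). I verified the small but necessary points you leave implicit --- that $\calL_+$ is uniformly positive for a fundamental decomposition of an Almost Pontryagin space with finite rank of non-positivity (intersect with $\wt\calL$, peel off the finite-dimensional positive rest, and invoke the boundedly-projected orthogonal-sum lemma as in \cite[Lemma I.5.3]{l}), and that the biorthogonality $[p_i,e_j]=\delta_{ij}$ forces $x_\calP=0$ in the verification of (vi) --- and they all check out.
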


We  now state the above-mentioned theorem.
We refer to \cite{bpt} where a similar result is shown for
(the special case of) self adjoint
operators in Krein spaces. Contrary to \cite{bpt}, the proof of
Theorem \ref{t:finite_rank} below is based on Lemma \ref{l:APS}.

\begin{theorem}\label{t:finite_rank}
 Let $A$ be a closed, densely defined $G$-symmet\-ric operator in $\calH$ and
let $0\notin\sigma_{\rm p}(G)$. If \,$\lambda\in\spip(A)\cap\R$ {\rm (}$\lambda\in\spim(A)\cap\R${\rm )} then there is a $G$-symmetric and bounded finite rank operator $F$ such that
$$
\lambda\in\spp(A + F)\cup\r(A + F) \quad
(\lambda\in\smm(A + F)\cup\r(A + F),\text{ respectively})
$$
and
$$
\dim\ran(F) = \kappa_{-,0}(\ker(A - \lambda))\quad
\bigl(\dim\ran(F) = \kappa_{+,0}(\ker(A - \lambda)),\text{ respectively}\,\bigr).
$$
\end{theorem}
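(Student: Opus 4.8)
The plan is to produce $F$ by a direct construction based on the structure theorem Lemma~\ref{l:APS}, applied to the Almost Pontryagin space $\calL := \ker(A-\lambda)$ sitting inside the dense linear manifold $\calD := \dom A$ (it is dense since $A$ is densely defined). Fix a fundamental decomposition $\calL = \calL_+[\ds]\calL_-[\ds]\calL^\iso$; by Lemma~\ref{l:jordanchains}(ii) the spaces $\calL_-$ and $\calL^\iso$ are finite-dimensional, so $\kappa_{-,0}(\ker(A-\lambda)) = \dim\calL_- + \dim\calL^\iso < \infty$. Write $\calL^\iso = \calL_{00}\ds\calL_{01}$, $\calG = \calL_{01}\ds\calP$ and $\calM$ as in Lemma~\ref{l:APS}, and let $J$ be the fundamental symmetry in $\calG$ with $\calP = J\calL_{01}$. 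The idea is that the "bad" part of $\ker(A-\lambda)$ that prevents $\lambda$ from being of positive type is exactly $\calL_-[\ds]\calL_{01}$: on $\calL_-$ the form is negative, and $\calL_{01}$ is the part of the isotropic space that is \emph{not} isotropic in all of $\calH$, hence can be "rotated" into a positive space by pairing it with its partner $\calP$. Note $\calL_{00} = \calL^\iso\cap\calH^\iso$ is genuinely isotropic in $\calH$ and therefore harmless: vectors in $\calL_{00}$ never appear in an approximate eigensequence with $\liminf[x_n,x_n]\le 0$ because they are $[\cdot,\cdot]$-orthogonal to everything. Thus we expect $\dim\ran F = \dim\calL_- + \dim\calL_{01} = \dim\calL_- + \dim\calL^\iso - \dim\calL_{00}$, and the theorem's count forces $\calL_{00} = \{0\}$ — which is exactly where the hypothesis $0\notin\sigma_{\rm p}(G)$ enters: if $0$ is not an eigenvalue of $G$ then $\calH^\iso = \ker G = \{0\}$, so $\calL_{00} = \calL^\iso\cap\{0\} = \{0\}$.

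The construction itself: first I would define $F$ to act as $-2\lambda P_{\calL_-} + (\text{a bounded finite-rank correction supported on }\calG)$, where $P_{\calL_-}$ is a suitable projection. More precisely, choose an orthonormal basis $e_1,\dots,e_p$ of $\calL_-$ (orthonormal in $\hproduct$; these are negative vectors) and define the rank-one $G$-symmetric operators $x \mapsto -\frac{2}{[e_j,e_j]}[x,e_j]\,e_j$; their sum shifts the eigenvalue $\lambda$ on $\calL_-$ so that $(A+F)e_j = -\lambda e_j$... — actually the cleaner route is to make $F$ \emph{move} the negative eigenvectors off the point $\lambda$ rather than flip the sign of the form there. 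So I would instead arrange $(A+F)|\calL_- $ to have spectrum disjoint from $[a,b]$, and separately handle $\calG$ by a $G$-symmetric rank-$(2\dim\calL_{01})$... no: to meet the dimension count $\kappa_{-,0}$ exactly, $F$ should have rank $\dim\calL_- + \dim\calL_{01}$, so the $\calG$-part of $F$ must be rank $\dim\calL_{01}$, not $2\dim\calL_{01}$. The right choice on $\calG$: using the fundamental symmetry $J$ and a small parameter $t>0$, set $F|\calG$ to be (in the decomposition $\calL_{01}\ds\calP$) the operator sending $\calL_{01}\ni u \mapsto t\,Ju$; one checks via Lemma~\ref{l:APS}(ii)--(v) that this is $G$-symmetric and that it makes $\calG$ uniformly positive for the perturbed form on the relevant subspace while remaining rank $\dim\calL_{01}$. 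On $\calL_-$, use a rank-$\dim\calL_-$ $G$-symmetric operator (a $[\cdot,\cdot]$-multiple of the $\calL_-$-projection) that shifts the restricted operator's spectrum away from $[a,b]$. Set $F$ to be the direct sum of these two pieces and zero on $\calL_+[\ds]\calL_{00}[\ds]\calM$; then $F$ is bounded, finite rank, $G$-symmetric, and $\dim\ran F = \dim\calL_- + \dim\calL_{01} = \kappa_{-,0}(\ker(A-\lambda))$.

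Next I would verify $\lambda\in\spp(A+F)\cup\r(A+F)$ via Theorem~\ref{t:closed subspace}(v). Since $F$ is finite rank, hence $A$-compact, Theorem~\ref{t:comp_1} gives $\spip(A+F)\cup\r(A+F) = \spip(A)\cup\r(A)$, so in particular $\lambda\in\spip(A+F)\cup\r(A+F)$; by Corollary~\ref{c:spp_in_spip} it then suffices to show that $\ker(A+F-\lambda)$ is positive (or $\{0\}$). Here I would compute $\ker(A+F-\lambda)$ explicitly: because $F$ was chosen to vanish on $\calL_+[\ds]\calL_{00}[\ds]\calM$ and to shift the $\calL_-$-part off $\lambda$, one finds $\ker(A+F-\lambda) \subseteq \calL_+ [\ds] \calL_{00} [\ds] \calG$, with the $\calG$-contribution now controlled by the $tJ$-term. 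Using $\calL_{00} = \{0\}$ (from $0\notin\sigma_{\rm p}(G)$) and choosing $t$ small enough that the uniform positivity survives, one gets that $\ker(A+F-\lambda)$ is positive, as required; and for the $\spim$ case one runs the mirror-image argument with $\kappa_{+,0}$.

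The main obstacle I anticipate is the bookkeeping around $\calG$: showing that the single rank-$\dim\calL_{01}$ operator $u\mapsto tJu$ on $\calL_{01}$ is genuinely $G$-symmetric (this uses that $J$ is a fundamental symmetry of $\calG$ and the orthogonality relations in Lemma~\ref{l:APS}(iii)) \emph{and} simultaneously that the perturbed inner product restricted to the subspace reached by approximate eigensequences at $\lambda$ is uniformly positive — this requires understanding precisely how a would-be approximate eigensequence of $A+F$ at $\lambda$ decomposes relative to $\calL_+[\ds]\calL_-[\ds]\calL_{00}[\ds]\calG[\ds]\calM$, which in turn leans on Theorem~\ref{t:big_one} (the local Jordan/$\Phi_+$ structure near $[a,b]$) to ensure nothing escapes to infinity-codimension. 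The delicate point is that $F$ must not create \emph{new} spectral points of non-positive type near $\lambda$, i.e., the shift on $\calL_-$ and the $\calG$-twist must be compatible; controlling both with one small parameter $t$ and verifying the $\Phi_+$-estimate \eqref{delta.delta} on a finite-codimension subspace is the technical heart of the argument.
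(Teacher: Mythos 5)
Your proposal is in essence the paper's own argument: both use Lemma~\ref{l:APS} applied to the Almost Pontryagin space $\ker(A-\lambda)$, both build $F$ supported on $\calL_-\ds\calL_{01}$ by means of the fundamental symmetry $J$ with $J\calL_{01}=\calP$, and both exploit $0\notin\sigma_{\rm p}(G)\Rightarrow\calH^\iso=\{0\}\Rightarrow\calL_{00}=\{0\}$ to hit the dimension count $\kappa_{-,0}(\ker(A-\lambda))=\dim\calL_-+\dim\calL_{01}$. Two remarks on where your execution can be simplified. First, the small parameter $t$ is superfluous: the paper takes simply $F:=P_-+JP_0$ (no scaling), and the verification that $\ker(A+F-\lambda)=\calL_+$ is a short direct computation — if $x\in\ker(A+F-\lambda)$ then $P_-x,P_0x\in\ker(A-\lambda)$, and expanding $0=[(A+F-\lambda)x,P_-x]$ and $0=[(A+F-\lambda)x,P_0x]$ using the orthogonality relations of the decomposition gives $[P_-x,P_-x]=0$ and $[JP_0x,P_0x]=0$, hence $P_-x=P_0x=0$ and $x\in\calL_+$. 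No limiting argument or smallness condition is needed, so the worry you flag about "uniform positivity surviving for small $t$" dissolves. Second, you route $\lambda\in\spip(A+F)\cup\r(A+F)$ through the compact-perturbation stability Theorem~\ref{t:comp_1}; that works, but the paper gets it more cheaply by observing that $F$ vanishes on $\calH_\lambda:=\calL_+\,[\ds]\,\calP\,[\ds]\,\calM$, a manifold which Theorem~\ref{t:H0} certifies is admissible in Definition~\ref{d:spip} for $A$, so $A|\calH_\lambda=(A+F)|\calH_\lambda$ and the type-$\pi_+$ property transfers verbatim. Finally, the references to $[a,b]$ and Theorem~\ref{t:big_one} in your write-up are out of place here — this theorem concerns a single spectral point and none of the local Jordan-structure machinery is needed; the argument runs entirely on Lemma~\ref{l:ap}, Lemma~\ref{l:APS}, Theorem~\ref{t:H0} and Corollary~\ref{c:spp_in_spip}.
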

\begin{proof}
$\ker(A - \lambda)$ is an Almost Pontryagin space (Lemma \ref{l:ap}) with
  a fundamental decomposition
  $\calL_+ [\ds] \calL_- [\ds] \calL_0$.
   There are $\calP\subset\dom A$ and $\calM\subset\calH$ as in Lemma \ref{l:APS} ($\calL_{00} = \{0\}$) with
$$
\calH = \calL_+ [\ds] \calL_- [\ds] (\calL_{01} \ds \calP) [\ds] \calM.
$$
The subspace $\calL_+$ is uniformly positive and, by \eqref{Elgersburg},
$\calH = \calL_+[\ds]\calL_+^\gperp$. Hence a bounded projection
onto  $\calL_+^\gperp =
\calL_- [\ds] (\calL_{01} \ds \calP) [\ds] \calM$ exists. The subspaces
$\calL_{01}$ and $\calP$ are finite-dimensional, therefore $(\calL_{01} \ds \calP) [\ds] \calM$ is also closed and the bounded projection $P_-$ onto $\calL_-$
exists. A similar reasoning shows that there exists
the bounded projection $P_0$ onto  $\calL_{01}$.
 With the fundamental symmetry $J$ in $\calL_{01} \ds \calP$ with $J\calL_{01} = \calP$ (see Lemma \ref{l:APS}) define
$$
F := P_- + JP_0.
$$
This operator is $G$-symmetric since $P_-$ is $G$-symmetric and
$$
[JP_0x,y] = [JP_0x,P_0y] = [P_0x,JP_0y] = [x,JP_0y]
\;\text{ for }\;x,y\in\calH.
$$
Assume $\lambda\in\sap(A + F)$. Set
$\calH_\lambda := \calL_+ [\ds] \calP [\ds] \calM$. By $\lambda \in \spip(A)$
and Theorem \ref{t:H0}, $\calH_\lambda$ is a subspace as
as in Definition \ref{d:spip} (for $A$).
 Since $A|\calH_\lambda = (A + F)|\calH_\lambda$ we have
 $\lambda\in\spip(A + F)$. Moreover, the inclusion
  $\calL_+ \subset \ker(A + F - \lambda)$ holds. If, conversely,
  $x\in\ker(A + F - \lambda)$, then we have $P_-x$, $P_0x \in
  \ker(A - \lambda)$,
  $[x,(A-\lambda)P_- x]=0$,
  $[JP_0x,P_-x]=0$, $[x,(A-\lambda)P_0x]=0$,
  $[P_- x,P_0x]=0$, and
\begin{equation*}
\begin{split}
[P_-x,P_-x] &= [P_-x,P_-x] + [(A - \lambda)x,P_-x] + [JP_0x,P_-x]\\
            &= [(A + F - \lambda)x,P_-x] = 0 \quad \mbox{and}\\
[JP_0x,P_0x] &= [JP_0x,P_0x] + [(A - \lambda)x,P_0x] + [P_-x,P_0x]\\
             &= [(A + F - \lambda)x,P_0x] = 0.
\end{split}
\end{equation*}
Hence we have  $P_-x=P_0x=0$, $Fx = 0$,  $x\in\ker(A - \lambda)$, and thus $x\in\calL_+$. With Corollary \ref{c:spp_in_spip}, $\lambda\in\spp(A + F)$ follows.
\end{proof}

Theorem \ref{t:finite_rank} and Theorem \ref{t:big_one} together with Corollary \ref{c:spp_in_spip} yield

\begin{corollary}
 Let $A$ be a closed, densely defined $G$-symmet\-ric operator in $\calH$,
let $0\notin\sigma_{\rm p}(G)$, let  $[a,b]\cap\sap(A)\subset\spip(A)$ or $[a,b]\cap\sap(A)\subset\spim(A)$.
Then there exists a $G$-symmetric, bounded finite rank operator $F$ and an open neighborhood $\calU$ of $[a,b]$ in $\C$ such that
$\calU\setminus\R\subset\r(A + F)$.
\end{corollary}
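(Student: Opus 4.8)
The plan is to reduce the statement to a single point of the interval via Theorem~\ref{t:finite_rank} and then to exploit the rigidity built into Theorem~\ref{t:big_one}. Assume $[a,b]\cap\sap(A)\subset\spip(A)$; the case $[a,b]\cap\sap(A)\subset\spim(A)$ is symmetric. We may assume $[a,b]\cap\sap(A)\neq\emptyset$, since otherwise $[a,b]\subset\r(A)$ and, $\r(A)$ being open, there is an open neighborhood $\calU\subset\r(A)$ of $[a,b]$, so that $F:=0$ already works. So fix $\lambda_0\in[a,b]\cap\sap(A)\subset\spip(A)\cap\R$ and let $F$ be the $G$-symmetric bounded finite rank operator provided by Theorem~\ref{t:finite_rank}, so that $\lambda_0\in\spp(A+F)\cup\r(A+F)$. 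Put $B:=A+F$; then $B$ is closed, densely defined and $G$-symmetric. Since $F$ is bounded and of finite rank, $B-A=F$ is $A$-compact, so Theorem~\ref{t:comp_1} yields $\spip(B)\cup\r(B)=\spip(A)\cup\r(A)$; because $[a,b]\cap\sap(A)\subset\spip(A)$ and $[a,b]\setminus\sap(A)\subset\r(A)$, we get $[a,b]\subset\spip(B)\cup\r(B)$, hence $[a,b]\cap\sap(B)\subset\spip(B)$, and Theorem~\ref{t:big_one} is applicable to $B$ on $[a,b]$.

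The key local observation I would establish is that $\lambda_0$ has an open neighborhood $\calW$ in $\C$ with $\calW\setminus\R\subset\r(B)$. If $\lambda_0\in\r(B)$ this is clear since $\r(B)$ is open. If $\lambda_0\in\spp(B)$, then, $\spp(B)$ being relatively open in $\esap(B)$ (the corollary following Theorem~\ref{t:comp_set}), there is an open neighborhood $\calW$ of $\lambda_0$ with $\calW\cap\esap(B)\subset\spp(B)$; for $\lambda\in\calW\setminus\R$ with $\lambda\in\sap(B)$ we would have $\lambda\in\spp(B)$, so $\ker(B-\lambda)$ would be positive by Corollary~\ref{c:spp_in_spip} but also neutral by \eqref{l:ker_perp_ker}, forcing $\ker(B-\lambda)=\{0\}$; since $B-\lambda$ is a $\Phi_+$-operator by Lemma~\ref{l:jordanchains}, its range is closed, whence $\lambda\in\r(B)$, a contradiction. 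Thus $\calW\setminus\R\subset\r(B)$ in either case.

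Finally I would apply Theorem~\ref{t:big_one} to $B$ and $[a,b]$, obtaining an open neighborhood $\calU$ of $[a,b]$ in $\C$, a finite set $\sigma$, and the constant $\alpha\in\N$. The set $\calW\cap\calU$ is an open neighborhood of the real point $\lambda_0$, so it contains non-real points, and these lie in $\calW\setminus\R\subset\r(B)$, hence are not eigenvalues of $B$. If $\alpha>0$, then Theorem~\ref{t:big_one}(d) would force $\calU\subset\sigma_{\rm p}(B)$, contradicting the previous sentence; therefore $\alpha=0$, and Theorem~\ref{t:big_one}(c) gives $\calU\setminus\R\subset\r(B)=\r(A+F)$, which is the assertion. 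The main obstacle here is conceptual rather than computational: Theorem~\ref{t:finite_rank} repairs only one point, whereas the corollary concerns a whole interval, and in the hard regime $\alpha>0$ every non-real point near $[a,b]$ is an eigenvalue of $A$ carrying an infinite Jordan chain, so no finite rank perturbation could cure all of them individually. The point is that $\alpha$ is one global constant attached to the interval and $\alpha>0$ is incompatible with the existence of even a single non-real point of regular type; hence one suitably chosen finite rank perturbation, which produces just one such point, already forces $\alpha=0$ along the entire interval.
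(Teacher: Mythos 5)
Your proof is correct and follows the approach the paper intends: perturb at a single point $\lambda_0$ via Theorem \ref{t:finite_rank}, then invoke the global dichotomy $\alpha=0$ versus $\alpha>0$ of Theorem \ref{t:big_one} applied to $B=A+F$ to force $\alpha=0$ (since non-real points of regular type near $\lambda_0$ are incompatible with \ref{t:big_one}(d)) and hence $\calU\setminus\R\subset\r(A+F)$ by \ref{t:big_one}(c). You make explicit one ingredient the paper's one-line citation omits, namely Theorem \ref{t:comp_1}, which is indeed needed to transfer the hypothesis $[a,b]\cap\sap(A)\subset\spip(A)$ to $B$ before Theorem \ref{t:big_one} can be applied to $B$ on $[a,b]$.
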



\subsection{Growth of the Resolvent}\label{ss:growth}
The second part of the following Theorem was proved in \cite{lmm} for bounded $G$-symmetric operators.

\begin{theorem}\label{t:langer}
 Let $A$ be a closed, densely defined $G$-symmet\-ric operator in $\calH$.
\begin{itemize}
\item[{\rm (a)}] If $[a,b]\cap\sap(A)\subset\spip(A)$ or $[a,b]\cap\sap(A)\subset\spim(A)$, then there exist an open neighborhood $\calU$ in $\C$ of $[a,b]$, a subspace $\calH_0\subset\calH$ with $\codim\calH_0 < \infty$ and a number $c > 0$ such that
\begin{equation}\label{e:PiPlusPhiPlus}
\|(A - \lambda)x\| \ge c|\Im\lambda|\|x\|
\end{equation}
holds for all $x\in\calH_0\cap\dom A$ and all $\lambda\in\calU\setminus\R$.

\item[{\rm (b)}] If $[a,b]\cap\sap(A)\subset\spp(A)$ or $[a,b]\cap\sap(A)\subset\smm(A)$, then there are $\calU$ and $c$ as in {\rm (a)} such that {\rm \eqref{e:PiPlusPhiPlus}} holds for all $x\in\dom A$ and all $\lambda\in\calU\setminus\R$. In particular, $\calU\setminus\R\subset\r(A)$, and if even $\calU\setminus\R\subset\rho(A)$ holds, then with $M := c^{-1}$ we have
\begin{equation}\label{e:res_pp}
\|(A - \lambda)^{-1}\| \le \frac{M}{|\Im\lambda|} \;\;\text{ for all }
\lambda\in\calU\setminus\R.
\end{equation}
If $G$ is boundedly invertible and the operator $GA$ is selfadjoint in $\calH$, then $\calU$ can be chosen such that $\calU\setminus\R\subset\rho(A)$.
\end{itemize}
\end{theorem}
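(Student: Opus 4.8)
The plan is to derive the estimate \eqref{e:PiPlusPhiPlus} by combining the uniform inequalities of Theorems \ref{t:comp_set} and \ref{t:comp_set_pp} with one elementary consequence of $G$-symmetry, and then to read off the resolvent statements. The consequence is: for $x\in\dom A$ both $[Ax,x]$ and $[x,x]$ are real (the first because $[Ax,x]=[x,Ax]=\ol{[Ax,x]}$), so for $\lambda\in\C\setminus\R$ one has $\Im[(A-\lambda)x,x]=-(\Im\lambda)[x,x]$ and hence
\[
|\Im\lambda|\,\bigl|[x,x]\bigr|\ \le\ \bigl|[(A-\lambda)x,x]\bigr|\ \le\ \|G\|\,\|(A-\lambda)x\|\,\|x\|.
\]
For (a) I would apply Theorem \ref{t:comp_set} to $K:=[a,b]$ (here $K\cap\esap(A)=[a,b]\cap\sap(A)\subset\spip(A)$) to get an open neighbourhood $\calU\subset\C$ of $[a,b]$, a subspace $\calH_0\subset\calH$ with $\codim\calH_0<\infty$, and $\veps>0$ such that $\|(A-\lambda)x\|\le\veps\|x\|$ forces $[x,x]\ge\veps\|x\|^2$ for $x\in\calH_0\cap\dom A$, $\lambda\in\calU$. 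After shrinking $\calU$ into the strip $\{|\Im z|<1\}$, a two-case split according as $\|(A-\lambda)x\|\le\veps\|x\|$ or not (in the first case the display together with $[x,x]\ge\veps\|x\|^2$ gives the estimate, in the second it is immediate since $|\Im\lambda|<1$) yields \eqref{e:PiPlusPhiPlus} with $c:=\veps/(1+\|G\|)$; the $\spim$-case only flips a sign (using $-[x,x]\ge\veps\|x\|^2$). Part (b) is identical with Theorem \ref{t:comp_set_pp} in place of Theorem \ref{t:comp_set}, the difference being that there $\calH_0$ may be taken to be $\calH$, so \eqref{e:PiPlusPhiPlus} then holds for all $x\in\dom A$.

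From \eqref{e:PiPlusPhiPlus} valid on all of $\dom A$, with $c|\Im\lambda|>0$ for $\lambda\in\calU\setminus\R$, one reads off at once that $A-\lambda$ is injective and — via the usual Cauchy-sequence argument and closedness of $A$ — has closed range; hence $\calU\setminus\R\subset\r(A)$. If in addition $\calU\setminus\R\subset\rho(A)$, substituting $x=(A-\lambda)^{-1}y$ with arbitrary $y\in\calH$ into \eqref{e:PiPlusPhiPlus} gives $\|(A-\lambda)^{-1}y\|\le(c|\Im\lambda|)^{-1}\|y\|$, i.e.\ \eqref{e:res_pp} with $M:=c^{-1}$.

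It remains to upgrade $\r(A)$ to $\rho(A)$ under the extra hypotheses that $G$ is boundedly invertible and $GA=(GA)^*$. Since $G$ is bounded and selfadjoint, $(GA)^*=A^*G$, so $A^*=GAG^{-1}$, and consequently
\[
\ran(A-\lambda)^\perp=\ker\bigl((A-\lambda)^*\bigr)=\ker(A^*-\ol\lambda)=G\,\ker(A-\ol\lambda)\qquad(\lambda\in\C).
\]
Replacing $\calU$ from (b) by $\calU\cap\{\ol z:z\in\calU\}$, which is still an open neighbourhood of $[a,b]\subset\R$ on which the conclusions of (b) persist, we may assume $\calU$ is symmetric about $\R$. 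Then for $\lambda\in\calU\setminus\R$ also $\ol\lambda\in\calU\setminus\R\subset\r(A)$, so $\ker(A-\ol\lambda)=\{0\}$; hence $\ran(A-\lambda)$ is dense, and being already closed it equals $\calH$. Together with the injectivity of $A-\lambda$ this gives $\lambda\in\rho(A)$, so \eqref{e:res_pp} applies on this $\calU$.

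Most of this is bookkeeping built on Theorems \ref{t:comp_set}--\ref{t:comp_set_pp}; I expect the only step with genuine content to be the passage from ``$\lambda\in\r(A)$'' to ``$\lambda\in\rho(A)$'' in the last paragraph, where the bounded invertibility of $G$ and the selfadjointness of $GA$ are truly needed — they supply the identity $A^*=GAG^{-1}$, which, combined with the reflection symmetry of $\calU$ about $\R$, converts injectivity of $A-\ol\lambda$ into surjectivity of $A-\lambda$. The only minor points to watch are the degenerate case $G=0$ (absorbed by the denominator $1+\|G\|$) and the preservation of ``neighbourhood of $[a,b]$'' through the shrinking and symmetrizing of $\calU$.
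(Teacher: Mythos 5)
Your proposal is correct and follows essentially the same route as the paper: apply Theorem \ref{t:comp_set} (resp.\ \ref{t:comp_set_pp}), split into the two cases $\|(A-\lambda)x\|\le\veps\|x\|$ versus $\ge\veps\|x\|$, and use the reality of $[Ax,x]$ and $[x,x]$ (i.e.\ $\Im[(A-\lambda)x,x]=-(\Im\lambda)[x,x]$) to convert the definiteness estimate into \eqref{e:PiPlusPhiPlus}, then feed that into the standard injective/closed-range argument for the resolvent bound. For the final upgrade to $\rho(A)$ the paper phrases the surjectivity step directly in Krein-space terms, $\ran(A-\lambda)=\ol{\ran(A-\lambda)}=\ker(A-\ol\lambda)^{\gperp}=\calH$ after symmetrizing $\calU$ about $\R$, whereas you unpack this into the Hilbert-space identity $A^*=GAG^{-1}$ and $\ran(A-\lambda)^\perp=G\ker(A-\ol\lambda)$; these are the same computation in two notations, so the content is identical.
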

\begin{proof}
(a). Let us assume that $[a,b]\cap\sap(A)\subset\spip(A)$. Set $K := [a,b]$, let $\calU$, $\calH_0$, and $\veps$ be as in Theorem \ref{t:comp_set} and define $c:=\min\{\veps/\|G\|,1\}$. It is no restriction to assume $|\Im\lambda| < \veps$ for all $\lambda\in\calU$. Now, let $x\in\calH_0\cap\dom A$ and $\lambda\in\calU\setminus\R$. If $\|(A - \lambda)x\|\ge \veps\|x\|$, relation \eqref{e:PiPlusPhiPlus} clearly holds and if $\|(A - \lambda)x\|\le \veps\|x\|$, then by Theorem \ref{t:comp_set} we have $[x,x]\ge\veps\|x\|^2$, and
$$
|\Im\lambda|\veps\|x\|^2 \le |\Im[\lambda x,x]| =
|\Im[(A - \lambda)x,x]| \le \|G\|\|(A - \lambda)x\|\|x\|
$$
follows. This shows (a).

(b). For the proof of the first part of (b) apply Theorem \ref{t:comp_set_pp} instead of Theorem \ref{t:comp_set} in the argumentation above and we obtain that the inequality \eqref{e:PiPlusPhiPlus} is valid for all $x\in\dom A$, thus $\calU\setminus\R\subset\r(A)$ holds. Hence, \eqref{e:res_pp} follows from the assumption $\calU\setminus\R\subset\rho(A)$.
If $G$ is boundedly invertible and $GA = (GA)^*$ then $A$ is selfadjoint in the Krein space $(\calH,\product)$. Choose $\calU$ as in the first part of (b) such that $\calU$ is symmetric with respect to $\R$. Then $\calU\setminus\R\subset\r(A)$ and  for $\lambda\in\calU\setminus\R$ we have $\ker(A - \lambda) = \ker(A - \ol\lambda) = \{0\}$ and $\ran(A - \lambda) = \ol{\ran(A - \lambda)} = \ker(A - \ol\lambda)^\gperp = \calH$. This proves $\lambda\in\rho(A)$.
\end{proof}

The following theorem shows that an inequality similar to \eqref{e:res_pp} holds in a neighborhood of intervals with spectral points of type $\pi_+$ or regular points of $A$.

\begin{theorem}\label{t:resolvente}
Let $A$ be a closed, densely defined $G$-symmet\-ric operator in $\calH$.
Let $[a,b]\cap\sap(A)\subset\spip(A)$ or $[a,b]\cap\sap(A)\subset\spim(A)$
such that there is an open neighborhood $\calU$ of $[a,b]$ in $\C$ with
$\calU\setminus\R\subset\rho(A)$. Then there exists an open neighborhood $\calV$ of
$[a,b]$ in $\C$ and constants $M>0$ and $m\in\N$ such that for $\lambda\in\calV\setminus\R$ we have
\begin{equation}\label{e:resolvente}
\|(A - \lambda)^{-1}\| \le \frac{M}{|\Im\,\lambda|^m}.
\end{equation}
\end{theorem}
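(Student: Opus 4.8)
The strategy is to reduce to Theorem \ref{t:langer}(b) by means of the finite rank perturbation from Theorem \ref{t:finite_rank}, together with the resolvent identity. First I would invoke Theorem \ref{t:big_one} to shrink $\calU$ so that $\calU\cap\sap(A)$ consists of at most finitely many points of $\spip(A)\setminus\spp(A)$ together with points of $\spp(A)$ (we are necessarily in the case $\alpha=0$ of that theorem, since $\alpha>0$ would force $\calU\subset\sigma_{\rm p}(A)$, contradicting $\calU\setminus\R\subset\rho(A)$). Denote these finitely many "bad" points by $\lambda_1,\dots,\lambda_k$. Near every other point of $[a,b]$ the operator is of definite type, so by Theorem \ref{t:langer}(b) there is a neighborhood on which $\|(A-\lambda)^{-1}\|\le c^{-1}|\Im\lambda|^{-1}$; hence it suffices to prove the estimate \eqref{e:resolvente} locally near each $\lambda_j$.

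\textbf{Localization at a bad point.} Fix $\lambda_0=\lambda_j$. Since $\lambda_0\in\spip(A)\cap\R$, Theorem \ref{t:finite_rank} gives a $G$-symmetric bounded finite rank operator $F$ with $\lambda_0\in\spp(A+F)\cup\r(A+F)$; then by Theorem \ref{t:langer}(b) applied to $A+F$ (on a small interval around $\lambda_0$ whose intersection with $\sap(A+F)$ lies in $\spp(A+F)$, using Theorem \ref{t:comp_set_pp}), there is a neighborhood $\calU_0$ and $c_0>0$ with $\|(A+F-\lambda)x\|\ge c_0|\Im\lambda|\,\|x\|$ for all $x\in\dom A$ and all $\lambda\in\calU_0\setminus\R$. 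I would need to also ensure $\calU_0\setminus\R\subset\rho(A+F)$: this holds on a possibly smaller neighborhood because $F$ is a finite rank (hence $A$-compact) perturbation, so $\sigma(A+F)$ and $\sigma(A)$ differ only by isolated eigenvalues of finite type, and $\calU\setminus\R\subset\rho(A)$ combined with the one-sided estimate rules out spectrum off the real axis near $\lambda_0$ after shrinking. Thus on $\calU_0\setminus\R$ we have $\|(A+F-\lambda)^{-1}\|\le c_0^{-1}|\Im\lambda|^{-1}$.

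\textbf{From $A+F$ back to $A$.} Now write, for $\lambda\in\calU_0\setminus\R$, the second resolvent identity
$$
(A-\lambda)^{-1} = (A+F-\lambda)^{-1} + (A-\lambda)^{-1}F(A+F-\lambda)^{-1}.
$$
The obstacle is that $(A-\lambda)^{-1}$ appears on the right, so this is not yet a bound; the key is that $F$ has finite rank $r$, say with range in a finite-dimensional subspace $\calF$ and $F=\sum_{i=1}^r(\cdot\,,g_i)f_i$. Testing the identity against the finitely many vectors $g_1,\dots,g_r$ (or, more precisely, studying the $r\times r$ matrix-valued function $\mu(\lambda)$ whose entries are $\big((A+F-\lambda)^{-1}f_i,g_{i'}\big)$ and its relation to $I+F(A+F-\lambda)^{-1}$ restricted to $\calF$) reduces the problem to estimating the inverse of a $r\times r$ matrix $N(\lambda)$ all of whose entries are bounded by $C|\Im\lambda|^{-1}$ and which is invertible precisely for $\lambda\in\rho(A)$. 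Such a matrix inverse is controlled by $\|\det N(\lambda)\|^{-1}\cdot\|\operatorname{adj}N(\lambda)\|$, where the adjugate entries are again $O(|\Im\lambda|^{-(r-1)})$, and $|\det N(\lambda)|$ is bounded below using that it is an analytic function on $\calU_0\setminus\R$ extending (as a meromorphic function) across the real axis with at worst pole-type behaviour there — so after shrinking $\calU_0$, $|\det N(\lambda)|\ge c_1|\Im\lambda|^{M_0}$ for some $M_0\in\N$. Combining these yields $\|(A-\lambda)^{-1}\|\le M_1|\Im\lambda|^{-m}$ on $\calU_0\setminus\R$ with $m$ depending on $r$ and $M_0$. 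Finally, covering $[a,b]$ by the definite-type neighborhoods and the finitely many $\calU_0$'s, taking the worst exponent and largest constant, produces the neighborhood $\calV$ and constants $M,m$ as claimed.

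\textbf{Main obstacle.} The heart of the matter — and the step I expect to consume most of the work — is the quantitative lower bound $|\det N(\lambda)|\gtrsim|\Im\lambda|^{M_0}$ near the finitely many real bad points. One cannot simply use analyticity in $\calU_0\setminus\R$: the determinant could in principle decay faster than polynomially as $\Im\lambda\to0$. The resolution is to show that $\det N(\lambda)$ is actually the boundary value of a function meromorphic in a full neighborhood of $\lambda_0$ in $\C$ (equivalently, that $(A+F-\lambda)^{-1}$ has at most finite-order growth towards $\R$ near $\lambda_0$ — which is exactly \eqref{e:res_pp} — and that the finite rank structure turns "invertible iff $\lambda\in\rho(A)$" into "$\det N$ vanishes only on a discrete set"), so that its zeros and poles on the real axis are isolated and of finite order, giving the polynomial lower bound after shrinking. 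Executing this cleanly — in particular identifying the right scalar (or matrix) determinant-type quantity whose analytic continuation across $\R$ is transparent — is the crux.
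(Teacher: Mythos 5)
Your overall strategy --- reduce to finitely many points of $\spip(A)\setminus\spp(A)$ via Theorems \ref{t:big_one} and \ref{t:langer}, then work locally near each such point --- agrees with the opening line of the paper's proof. Beyond that the routes diverge, and your route has two problems, one minor and one serious.

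\textbf{Minor: an unstated hypothesis.} You invoke Theorem \ref{t:finite_rank}, which requires $0\notin\sigma_{\rm p}(G)$. Theorem \ref{t:resolvente} has no such hypothesis, so this step already narrows the scope of what you would prove. The paper's proof does not use Theorem \ref{t:finite_rank} and does not need this condition.

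\textbf{Major: the determinant lower bound is a genuine gap, and the proposed fix does not work.} You correctly identify the crux --- a lower bound of the form $|\det N(\lambda)|\ge c\,|\Im\lambda|^{M_0}$ --- but the argument you sketch to close it is not available. The entries of $N(\lambda)$ are $\bigl((A+F-\lambda)^{-1}f_i,g_j\bigr)$, holomorphic on $\rho(A+F)$ and bounded by $C/|\Im\lambda|$. That upper bound, together with zero-freeness on $\calU_0\setminus\R$, does \emph{not} force a meromorphic continuation across $\R$, because $\sigma(A+F)$ near $\lambda_0$ may be (and generically is) a whole real interval. In that case the relevant entries are boundary values of Cauchy transforms $\int (t-\lambda)^{-1}\,d\mu_{ij}(t)$ of spectral measures (via the local spectral function of Theorem \ref{t:B} applied to $A+F$), which typically exhibit a jump discontinuity across $\R$ rather than isolated poles; $\det N$ is then a polynomial expression in such functions and inherits the cut. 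There is no general reason a zero-free holomorphic function on $\calU_0\setminus\R$ with a polynomial upper bound has a polynomial lower bound: the only place a finite-order-zero argument is available is when $\lambda_0\in\rho(A+F)$, i.e.\ when Theorem \ref{t:finite_rank} yields $\lambda_0\in\r(A+F)$ rather than $\lambda_0\in\spp(A+F)$ --- precisely the uninteresting case. So the heart of the proof remains open in your proposal.

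\textbf{What the paper does instead.} Rather than transferring bounds through a resolvent identity and a determinant, the paper builds a $G$-orthogonal decomposition $\calH=\calN_+\ds\calL_0\ds\calL_1\ds\calM\ds\calP$ (using $\calN_+$ the uniformly positive part of $\ker(A-\lambda_0)$, the finite-dimensionality of $\calL_{\lambda_0}(A_0)$ from Theorem \ref{t:big_one}(b), and Lemma \ref{l:APS}) with respect to which $A$ becomes block \emph{upper triangular}. Three of the diagonal blocks act on finite-dimensional spaces, so their resolvents trivially have polynomial poles at $\lambda_0$; the remaining block $A_{33}$ on $\calM$ is shown to satisfy $\lambda_0\in\spp(A_{33})\cup\rho(A_{33})$, so Theorem \ref{t:langer}(b) gives the $|\Im\mu|^{-1}$ bound. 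Because the matrix is upper triangular, $(A-\mu)^{-1}$ is assembled from the diagonal inverses and the fixed off-diagonal entries by finitely many products and sums, so the polynomial bound \eqref{e:resolvente} follows with no inverse-determinant to estimate. This is the mechanism that sidesteps the obstacle you ran into: the finite-dimensional ``bad'' part is split off as a direct summand in the domain decomposition, not removed by an additive perturbation whose effect must be undone afterward.

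If you wish to rescue the perturbative route, you would need to choose $F$ so that $A+F$ is not merely of positive type near $\lambda_0$ but actually has $\lambda_0$ in its resolvent set, \emph{and} argue separately that the determinant is holomorphic with a zero of finite order at $\lambda_0$ --- but at that point you have essentially redone the paper's block analysis, since the operator you need is the one that becomes block diagonal with a finite-dimensional nilpotent part.
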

\begin{proof}
In view of Theorems \ref{t:langer} and \ref{t:big_one} it is sufficient to prove Theorem \ref{t:resolvente} in a
 neighbourhood of a spectral point $\lambda\in\spip(A)\setminus \spp(A)$.
   Choose a fundamental decomposition of $\ker(A - \lambda) = \calN_+ [\ds] \calN_- [\ds] \calN_0$. By Lemma \ref{l:ap}, $\calN_- [\ds] \calN_0$ is finite-dimensional and by Corollary \ref{Steinach}
   it contains at least one non-zero element.
   We set $A_0 := A|\calN_+^\gperp$.
   Then $A_0$ is closed, densely defined in $\calN_+^\gperp$,
   $G$-symmetric with $\lambda\in \spip(A_0)$ and $\calU\setminus\R\subset\rho(A_0)$.
  We have
  $\calL_\lambda(A) = \calN_+ [\ds]  \calL_\lambda(A_0)$
   and by Theorem \ref{t:big_one} (b) the subspace $\calL_\lambda(A_0)$ is finite-dimensional. If $\calL_\lambda(A_0) = \calL_+ [\ds] \calL_- [\ds] \calL_0$ is a
    fundamental decomposition of $\calL_\lambda(A_0)$,
    then $\calL_+$, $\calL_-$ and $\calL_0$ are finite-dimensional. With Lemma \ref{l:APS} applied to $\calL_\lambda(A_0)$
we find subspaces $\calL_{00},\calL_{01},\calP\subset\dom(A_0)$ and $\calM\subset\calN_+^\gperp$ which satisfy $\calL_0 = \calL_{00}[\ds]\calL_{01}$ and
$$
\calN_+^\gperp = \calL_+ [\ds] \calL_- [\ds] \calL_{00} [\ds] (\calL_{01} \ds \calP) [\ds] \calM.
$$
Set $\calL_1 := \calL_+ [\ds] \calL_-$. With \eqref{Elgersburg} we obtain
\begin{equation}\label{e:res_dec}
\calH = \calN_+ \ds \calL_0 \ds \calL_1 \ds \calM \ds \calP.
\end{equation}
With respect to the decomposition \eqref{e:res_dec}
 the operator $A$ can be represented as
\begin{equation}\label{e:matrix_representation}
A =
\begin{pmatrix}
\lambda &    0   &    0   &    0   &    0   \\
 0  & A_{11} & A_{12} & A_{13} & A_{14} \\
 0  &    0   & A_{22} &    0   & A_{24} \\
 0  &    0   &    0   & A_{33} & A_{34} \\
 0  &    0   &    0   &    0   & A_{44}
\end{pmatrix}.
\end{equation}
This is a consequence of the fact that $\calN_+$, $\calN_+^\gperp$
$\calL_0$, $\calL_1\ds\calL_0$ as well as $\calL_0^\gperp = \calL_0 \ds \calM$ are $A$-invariant. On $\dom A$ define the operators
\begin{equation}\label{GerabergimFebruar}
K :=
\begin{pmatrix}
 0  &    0    &    0    &    0    &    0    \\
 0  & -A_{11} & -A_{12} & -A_{13} & -A_{14} \\
 0  &    0    & -A_{22} &    0    & -A_{24} \\
 0  &    0    &    0    &    0    & -A_{34} \\
 0  &    0    &    0    &    0    & -A_{44}
\end{pmatrix}
\quad\text{and}\quad
\wt A :=
\begin{pmatrix}
\lambda &    0   &    0   &    0   &    0   \\
 0  &    0   &    0   &    0   &    0   \\
 0  &    0   &    0   &    0   &    0   \\
 0  &    0   &    0   & A_{33} &    0   \\
 0  &    0   &    0   &    0   &    0
\end{pmatrix}.
\end{equation}
Then, $\wt A = A + K$, and $K$ is easily seen to be $A$-bounded,
 that is, $\wt A$ is a bounded mapping from
 $(\calH_A, \hproduct_A)$ (cf.\ \eqref{PoeHoe})
  into $\calH$, see \cite[IV \S 1.1]{k}.
 As $K$ has a finite-dimensional range, $K$ is $A$-compact. By \cite[Theorem IV.1.11]{k}, the operator $\wt A$ is closed. Hence also $A_{33}$ is closed. We have
\begin{equation}\label{GerabergimMaerz}
\rho(A)\setminus\{\lambda\}  = \rho(A_{11})\cap\rho(A_{22})\cap\rho(A_{33})\cap\rho(A_{44})\setminus\{\lambda\}.
\end{equation}
Moreover, $A_{33}$ is $\product$-symmetric and if
for some $x\in\calM\cap\dom A$ we have $(A_{33}-\la)x=0$,
then \eqref{e:matrix_representation} implies $(A-\la)x\in\calL_\la(A)$,
hence $x\in\calL_\la(A)$ which yields $x=0$. Thus, we have
\begin{equation}\label{Hbf}
\ker(A_{33} - \la) = \{0\}.
\end{equation}
In addition, the following holds:
\begin{equation}\label{zuhause}
\la\in\spp(A_{33} )\cup\rho(A_{33}).
\end{equation}
To prove \eqref{zuhause}, assume $\la\in\r(A_{33})$. Then we find a neighbourhood
(cf.\ \eqref{e:rA}) $\calW$
of $\lambda$ in $\mathbb C$ such that $\calW \subset\r(A_{33})$ and
$\calW \subset \calU$. By assumption $\calW \setminus\mathbb R\subset
\calU\setminus\mathbb R\subset \rho(A)$ and we conclude from
\eqref{GerabergimMaerz} that $A_{33}-\lambda$ is a Fredholm operator
of index zero which together with \eqref{Hbf} implies $\lambda\in
\rho(A_{33})$. It remains to consider the case $\lambda \in \sap(A_{33})$.
By \eqref{GerabergimFebruar}, $\lambda \in \sap(\wt A)$ and, as $\lambda \in
\spip(A)$ we conclude from Theorem \ref{t:comp_1} that
$\lambda\in\spip(\wt A)$. Also, by  \eqref{GerabergimFebruar},
$\lambda\in\spip(A_{33})$. In view of Corollary \ref{Steinach} and \eqref{Hbf} we obtain
$\lambda\in\spp(A_{33})$ and \eqref{zuhause} is proved.

Taking into account that the operators $A_{11}$, $A_{22}$ and $A_{44}$ act in spaces of finite dimension and using Theorem \ref{t:langer} (b) we find an open neighborhood $\calV$ of $\lambda$ in $\C$ and constants $M_1,M_2 > 0$ as well as $m_1\in\N$ such that
$$
\max_{k = 1,2,4}\|(A_{kk} - \mu)^{-1}\| \le \frac{M_1}{|\mu - \lambda|^{m_1}}
\quad\text{and}\quad
\|(A_{33} - \mu)^{-1}\| \le \frac{M_2}{|\Im\,\mu|}
$$
for all $\mu\in\calV\setminus\R$. Now, by using \eqref{e:matrix_representation} it is easily seen that \eqref{e:resolvente} holds.
\end{proof}



\section{The Local Spectral Function}\label{binkrank2}
Let $(a,b)$ be a real open interval with $-\infty\le a < b\le\infty$. By $\mathscr{M}(a,b)$ we denote the set consisting of all bounded intervals $\Delta$ whose closure is contained in $(a,b)$ and finite unions of such intervals. If $S$ is a discrete subset of $(a,b)$, we set
$$
\mathscr{M}_S(a,b) := \{\Delta\in\mathscr{M}(a,b) : \partial\Delta\cap S = \emptyset\}.
$$
Note that $S$ may accumulate to $a$ or $b$ and
that $\mathscr{M}_\emptyset(a,b) = \mathscr{M}(a,b)$.
We shall say that the bounded operator $B$ {\it commutes with $A$} if $BA\subset AB$, i.e.\
$$
x\in\dom A \;\Lra\; Bx\in\dom A \,\text{ and }\, ABx = BAx.
$$
If $\rho(A)\neq\emptyset$ this is equivalent to the fact that $B$ commutes with the resolvent of $A$.

\begin{definition}\label{d:spectral_function}
Let $S$ be a discrete subset of the open {\rm (}and maybe unbounded{\rm )} interval $(a,b)$. A mapping $E$ from $\mathscr{M}_S(a,b)$ into the set of bounded projections on $\calH$ is called a {\it local spectral function for $A$ on $(a,b)$} 
if $E(\emptyset)=0$ and the following conditions are satisfied:
\begin{enumerate}
\item[{\rm (S1)}] $E(\Delta_1\cap\Delta_2) = E(\Delta_1)E(\Delta_2)$ for all $\Delta_1,\Delta_2\in\mathscr{M}_S(a,b)$.
\item[{\rm (S2)}] If $\Delta_1,\Delta_2,\ldots\in\mathscr{M}_S(a,b)$ are mutually disjoint and $\Delta := \bigcup_{i=1}^\infty\Delta_i\in\mathscr{M}_S(a,b)$ then
$$
E(\Delta) = \sum_{i=1}^\infty E(\Delta_i),
$$
where the sum converges in the strong operator topology.
\item[{\rm (S3)}] If the bounded operator $B$ commutes with $A$, then it commutes with every $E(\Delta)$, $\Delta\in\mathscr{M}_S(a,b)$.
\item[{\rm (S4)}] $\sigma(A|E(\Delta)\calH)\subset\ol\Delta$.
\item[{\rm (S5)}] $\sigma(A|(I - E(\Delta))\calH)\subset\ol{\sigma(A)\setminus\Delta}$.
\end{enumerate}
\end{definition}


For a bounded operator $A$ with $(a,b)\cap\sap(A)\subset\spp(A)$ and $\calU\setminus\R\subset\rho(A)$ for some open neighborhood $\calU\subset\C$ of $(a,b)$ it is proved in \cite{lmm} that there exists a set function $E$ defined on $\mathscr{M}(a,b)$ with  (S1), (S2), (S4), and (S5) as in Definition \ref{d:spectral_function}. Moreover, every $E(\Delta)$ is a $G$-symmetric projection onto a uniformly positive subspace. Thus, $(E(\Delta)\calH, \product)$ is a Hilbert space and the restriction of $A$ to $E(\Delta)\calH$ is a selfadjoint operator.

It is not mentioned in \cite{lmm} that $E$ also has the property (S3). We will show this in the next theorem. Moreover, we extend the results on the local spectral function from \cite[Section 3]{lmm} to unbounded operators. 
We mention that Theorem \ref{t:B} below is contained in \cite[Theorem 2.7]{adp}.  However, in \cite{adp} the property (S3) is not proved explicitly.
Therefore we prefer to give a detailed proof here.

\begin{theorem}\label{t:B}
Let $A$ be a closed and densely defined $G$-symmet\-ric operator in $\calH$. Assume that $[a,b]\cap\sap(A)\subset\spp(A)$, and let $\calU$ be an open neighborhood of $[a,b]$ in $\C$ with $\calU\setminus\R\subset\rho(A)$. Then there exists a local spectral function for $A$ defined on $\mathscr{M}(a,b)$ as in Definition \ref{d:spectral_function}. Moreover, every $E(\Delta)$, $\Delta\in\mathscr{M}(a,b)$, is a $G$-symmetric projection onto a uniformly positive subspace.
\end{theorem}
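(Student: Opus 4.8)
The plan is to carry the resolvent--integral construction of the local spectral function from \cite[Section~3]{lmm} over to the (possibly unbounded) operator $A$, and then to observe that property~(S3) comes for free. First I would shrink $\calU$: intersecting the given $\calU$ with the neighbourhood provided by Theorem~\ref{t:langer}(b) and with the one provided by Theorem~\ref{t:comp_set_pp} (for $K=[a,b]$), I may assume that $\calU$ is connected and symmetric with respect to $\R$, that $\calU\cap\R$ is an open interval containing $[a,b]$, that $\calU\setminus\R\subset\rho(A)$ with $\|(A-\lambda)^{-1}\|\le M|\Im\lambda|^{-1}$ on $\calU\setminus\R$, and that there is $\eta>0$ with
$$
\mu\in\calU,\ v\in\dom A,\ \|(A-\mu)v\|\le\eta\|v\|\ \Longrightarrow\ [v,v]\ge\eta\|v\|^2 .
$$
Two remarks will be used throughout: for every interval $\Delta$ entering $\mathscr M(a,b)$ one has $\ol\Delta\subset(a,b)\subset\calU$, so all contours below stay inside $\calU$; and since $A$ is $G$-symmetric and $\calU$ is $\R$-symmetric, $[(A-z)^{-1}x,y]=[x,(A-\bar z)^{-1}y]$ for all $x,y\in\calH$ and $z\in\calU\setminus\R$.

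For a bounded interval $\Delta$ with endpoints $\alpha<\beta$, $[\alpha,\beta]\subset(a,b)$, I would define $E(\Delta)$ by a Stone--type limit of resolvent integrals,
$$
E(\Delta):=\slim_{\varepsilon\downarrow0}\frac1{2\pi i}\int_{\alpha}^{\beta}\bigl((A-t-i\varepsilon)^{-1}-(A-t+i\varepsilon)^{-1}\bigr)\,dt\;+\;\text{(boundary terms)},
$$
the integrals being taken over contours hugging $[\alpha,\beta]$ from above and below, with the handling of the endpoints $\alpha,\beta$ exactly as in \cite[Section~3]{lmm} (where one also checks that $E(\Delta)$ does not depend on whether $\alpha,\beta$ belong to $\Delta$). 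Then $E$ is extended to $\mathscr M(a,b)$ by $E(\emptyset):=0$ and $E\bigl(\bigcup_i\Delta_i\bigr):=\sum_i E(\Delta_i)$ over pairwise disjoint intervals.

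The analytically delicate point --- and the place where the type of $[a,b]$ is decisive --- is the existence of the strong limit, because the resolvent bound only gives $\|(A-t\mp i\varepsilon)^{-1}\|\le M/\varepsilon$, so the integrand along the two segments $t\mapsto t\mp i\varepsilon$ is a priori of size $\varepsilon^{-1}$. Using the resolvent identity together with $[(A-z)^{-1}x,y]=[x,(A-\bar z)^{-1}y]$, one gets for $x\in\calH$
$$
\frac1{2\pi i}\bigl[\bigl((A-t-i\varepsilon)^{-1}-(A-t+i\varepsilon)^{-1}\bigr)x,\,x\bigr]=\frac{\varepsilon}{\pi}\,[\,w_\varepsilon(t),w_\varepsilon(t)\,],\qquad w_\varepsilon(t):=(A-t+i\varepsilon)^{-1}x .
$$
Since $\bigl(A-(t-i\varepsilon)\bigr)w_\varepsilon(t)=x$ with $t-i\varepsilon\in\calU$ for small $\varepsilon$ and $t\in[\alpha,\beta]$, the choice of $\eta$ above gives $[w_\varepsilon(t),w_\varepsilon(t)]\ge0$ whenever $\|w_\varepsilon(t)\|\ge\eta^{-1}\|x\|$, while $\tfrac{\varepsilon}{\pi}|[w_\varepsilon(t),w_\varepsilon(t)]|\le\tfrac{\varepsilon}{\pi}\|G\|\,\eta^{-2}\|x\|^2=O(\varepsilon)$ in the opposite case. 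Hence $t\mapsto\tfrac{\varepsilon}{\pi}[w_\varepsilon(t),w_\varepsilon(t)]$ is, up to an $O(\varepsilon)$ term, the density of a nonnegative measure whose total mass stays bounded (this last estimate, resting on the same resolvent bound, is carried out as in \cite[Section~3]{lmm}); consequently the limit exists strongly, and in the process one obtains $[E(\Delta)x,x]=\nu_x(\Delta)$ for a finite positive Borel measure $\nu_x$ on $(a,b)$.

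Once $E$ is available, everything else follows by standard Riesz--Dunford manipulations, now carried out for an operator with $\rho(A)\neq\emptyset$. Contour deformation and splicing give that each $E(\Delta)$ is a bounded projection satisfying (S1), that $E(\Delta)\calH$ and $(I-E(\Delta))\calH$ are $A$-invariant, and that $A|E(\Delta)\calH$ and $A|(I-E(\Delta))\calH$ are genuine bounded operators whose spectra are contained in $\ol\Delta\subset(a,b)$, resp.\ in $\ol{\sigma(A)\setminus\Delta}$ --- this is (S4) and (S5). Since the contours are $\R$-symmetric and $[(A-z)^{-1}x,y]=[x,(A-\bar z)^{-1}y]$, each $E(\Delta)$ is $G$-symmetric; applying Theorem~\ref{t:comp_set_pp} to the bounded $G$-symmetric operator $A|E(\Delta)\calH$ and using $[E(\Delta)x,E(\Delta)x]=[E(\Delta)x,x]=\nu_x(\Delta)$, one obtains, as in \cite{lmm}, that $E(\Delta)\calH$ is uniformly positive, so $(E(\Delta)\calH,\product)$ is a Hilbert space and $A|E(\Delta)\calH$ is selfadjoint there. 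Property (S2) then follows by monotone convergence for the measures $\nu_x$, uniform positivity providing the strong convergence of the partial sums. Finally (S3) is immediate: if a bounded operator $B$ commutes with $A$, then $B(A-z)^{-1}=(A-z)^{-1}B$ for all $z\in\rho(A)$, so $B$ commutes with every approximating integral defining $E(\Delta)$, and passing to the strong limit yields $BE(\Delta)=E(\Delta)B$ for all $\Delta\in\mathscr M(a,b)$. I expect the $\varepsilon\downarrow0$ convergence to be the only genuinely delicate ingredient; the unbounded case introduces no new difficulty once one notes that each restriction $A|E(\Delta)\calH$ turns out to be bounded, and (S3) is a purely formal consequence of commutation with the resolvent.
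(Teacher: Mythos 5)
Your construction follows a genuinely different route from the paper's. The paper does not redo the resolvent-integral estimates of \cite{lmm} for unbounded $A$: it feeds the resolvent bound from Theorem~\ref{t:langer}(b) into the Lyubich--Matsaev theorem \cite[Chapter~II, \paragraf 2, Theorem~5]{lm} to obtain the maximal spectral subspace $\calL_{[a,b]}$, observes that $\sigma(A|\calL_{[a,b]})=\spp(A|\calL_{[a,b]})$ so that, by \cite{lmm}, $(\calL_{[a,b]},\product)$ is a Hilbert space on which $A$ restricts to a selfadjoint operator, and then simply sets $E(\Delta):=E_1(\Delta)P$ where $E_1$ is the classical spectral measure of that restriction and $P$ projects onto $\calL_{[a,b]}$. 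With that definition (S1), (S2), (S4), (S5) are inherited from the spectral theorem, but (S3) is the genuinely nontrivial part: the paper proves $B\calL_{\Delta_0}\subset\calL_{\Delta_0}$ by adapting \cite[Proposition~1.3.2]{cf}, and then establishes $B\calL_{\Delta_0}^\gperp\subset\calL_{\Delta_0}^\gperp$ via a nested-interval and holomorphic-continuation argument. Your approach reverses this trade-off: you extend the Stone-type resolvent-integral construction of \cite{lmm} to the unbounded setting, which shifts the work onto the $\varepsilon\downarrow0$ convergence, the uniform bound on the total mass, and the endpoint handling (all of which you defer to \cite{lmm}), but in exchange (S3) becomes genuinely immediate because a bounded $B$ commuting with $A$ commutes with every $(A-z)^{-1}$, hence with the approximating integrals and their strong limits; this is a real simplification that the paper's construction does not afford. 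Two details you should make explicit. First, the identity $[(A-z)^{-1}x,y]=[x,(A-\bar z)^{-1}y]$ for a merely $G$-symmetric (not $G$-selfadjoint) $A$ requires both $z$ and $\bar z$ to lie in $\rho(A)$ (one shows $G(A-z)^{-1}=(A^*-z)^{-1}G$ from $GA\subset A^*G$, and then needs $z\in\rho(A^*)$, i.e.\ $\bar z\in\rho(A)$), which is exactly why the $\R$-symmetry of $\calU$ matters. Second, to speak of the bounded $G$-symmetric operator $A|E(\Delta)\calH$ you must check $E(\Delta)\calH\subset\dom A$; this is not automatic for a strong limit of operators mapping into the non-closed set $\dom A$, but follows because $AE_\varepsilon=E_\varepsilon+\tfrac1{2\pi i}\int z(A-z)^{-1}\,dz$ converges strongly by the same estimates, so the closedness of $A$ yields the required inclusion.
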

\begin{proof}
{\bf 1. }By Theorem \ref{t:langer}(b) and
\cite[Chapter II, \paragraf 2, Theorem 5]{lm}, the maximal spectral subspace $\calL_{[a,b]}$ of $A$ corresponding to $[a,b]$ exists. Recall that the maximal spectral subspace $\calL_\Delta$ of $A$ corresponding
to a compact interval $\Delta$ (if it exists) has the following properties
(cf.\ \cite[Chapter I, \paragraf 4]{lm}):
\begin{itemize}
\item[I.]  $\calL_\Delta\subset\dom A$ is $A$-invariant.
\item[II.] $\sigma(A|\calL_\Delta)\subset\Delta\cap\sigma(A)$.
\item[III.] If $\calL\subset\dom A$ is an $A$-invariant subspace and $\sigma(A|\calL)\subset\Delta$ then $\calL\subset\calL_\Delta$.
\end{itemize}
We set $A_1:= A|\calL_{[a,b]}$. Then it follows from $\sigma(A_1)\subset [a,b]$ and \eqref{e:partial} that $\sigma(A_1) = \sap(A_1)$. This and the assumption $[a,b]\cap\sap(A)\subset\spp(A)$ yield $\sigma(A_1) = \spp(A_1)$.
By \cite{lmm}, $(\calL_{[a,b]},\product)$ is a Hilbert space, and the restriction of $A$ to $\calL_{[a,b]}$ is selfadjoint in this Hilbert space. Consequently, it has a spectral function which we denote by $E_1$. As $A$ is $G$-symmet\-ric, also $\calL_{[a,b]}^{[\perp]}$ is $A$-invariant.
Let $P$ be the projection onto $\calL_{[a,b]}$ with respect to the decomposition $\calH = \calL_{[a,b]}[\ds]\calL_{[a,b]}^\gperp$. Then, we define $E$ by
\begin{equation*}
E(\Delta) := E_1(\Delta)P,\quad\Delta\in\mathscr{M}(a,b).
\end{equation*}
It is not difficult to see that $E$ satisfies (S1), (S2), (S4) and (S5).

{\bf 2. }\;
It remains to show (S3). As $E_1$ is a spectral function of a selfadjoint
operator in a Hilbert space, it is sufficient
 to prove (S3) for a compact interval $\Delta_0 \subset (a,b)$.
 Note that $\calL_{\Delta_0} := E(\Delta_0)\calH$ is the maximal spectral subspace of $A$
 corresponding to $\Delta_0$.
 Let $B$ be a bounded operator which commutes with $A$. In order to
 show that $B$ commutes with $E(\Delta_0)$ it is sufficient to show
 that $\calL_{\Delta_0}$ and $\calL_{\Delta_0}^{[\perp]}$ are $B$-invariant.
 It is easily checked that the proof of  \cite[Proposition 1.3.2]{cf} is also
  valid for an unbounded operator $A$ and we obtain $B\calL_{\Delta_0}
  \subset \calL_{\Delta_0}$. It remains to prove
\begin{equation}\label{Trinken}
B\calL_{\Delta_0}^\gperp\subset\calL_{\Delta_0}^\gperp.
\end{equation}
To see this, let $\Delta\subset (a,b)$ be a compact interval such that $\Delta_0$ is contained in the interior of $\Delta$. We will show
\begin{equation}\label{Essen}
B\calL_\Delta^\gperp\subset\calL_{\Delta_0}^\gperp.
\end{equation}
Then \eqref{Trinken} follows from the fact that
for $\alpha,\beta\in (a,b)$, $\alpha < \beta$,
\begin{equation*}\label{e:schnitt}
\bigcap_{\veps > 0}\,\calL_{[\alpha - \veps,\beta + \veps]} = \calL_{\Delta},
\end{equation*}
which easily follows from the properties of maximal spectral subspaces.

In order to show \eqref{Essen}, let $\lambda_0\in\rho(B)$ and set $\calK := (B - \lambda_0)\calL_\Delta^\gperp$. Evidently, $\calK$ is closed. Since $\calL_\Delta^\gperp$ is $A$-invariant and $BA\subset AB$ holds, $\calK$ is $A$-invariant. From
$$
A|\calL_\Delta^\gperp = \Big[\,(B - \lambda_0)^{-1}|\calK\,\Big] \; \Big[\,A|\calK\,\Big] \; \Big[\,(B - \lambda_0)|\calL_\Delta^\gperp\,\Big]
$$
and (S5) we conclude that
\begin{equation}\label{e:K_and_L}
\sigma(A|\calK) = \sigma\left(A|\calL_\Delta^\gperp\right) \subset \ol{\sigma(A)\setminus\Delta}.
\end{equation}
Let $x\in\calK$, $x = u+v$, where $u\in\calL_\Delta$ and $v\in\calL_\Delta^\gperp$. For $\lambda\in\rho(A)$ we have
$$
(A - \lambda)^{-1}u = (A|\calK - \lambda)^{-1}x - (A|\calL_\Delta^\gperp - \lambda)^{-1}v,
$$
and from \eqref{e:K_and_L} it follows that this function admits a holomorphic continuation to $(a',b')$ where $\Delta = [a',b']$. As $(A - \lambda)^{-1}u\in\calL_\Delta$ for $\lambda\in\rho(A)$ and $\sigma(A|\calL_\Delta)\subset\Delta$, the function $\lambda\mapsto (A|\calL_\Delta - \lambda)^{-1}u$ extends to a holomorphic function $\C\setminus\{a',b'\}\to\calL_\Delta$. Since $(\calL_\Delta,\product)$ is a Hilbert space and $A|\calL_\Delta$ is selfadjoint in this Hilbert space,
$$
u\in\ker(A - a')\ds\ker(A - b')\subset\calL_{\Delta_0}^\gperp,
$$
follows and, hence,
 $x = u + v \in \calL_{\Delta_0}^\gperp + \calL_\Delta^\gperp \subset \calL_{\Delta_0}^\gperp$.
\end{proof}

 The next theorem is the main result in this section.
\begin{theorem}\label{t:sf}
 Let $A$ be a closed, densely defined $G$-symmet\-ric operator in $\calH$ and
$(a,b)$ a {\rm (}possibly unbounded{\rm )} open interval in $\mathbb R$ with $(a,b)\cap\sap(A)\subset\spip(A)$ such that there exists an open neighborhood $\calU$ of $(a,b)$ in $\C$ with $\calU\setminus\R\subset\rho(A)$. Then $A$ has
 a local spectral function $E$ on $(a,b)$ with  $S := (a,b)\cap(\spip(A)\setminus\spp(A))$. For $\Delta\in\mathscr{M}_S(a,b)$
  the projection $E(\Delta)$ is $G$-selfadjoint, and its range is an Almost Pontryagin space with finite rank of non-positivity.
\end{theorem}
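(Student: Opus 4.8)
The plan is to reduce the global statement on $(a,b)$ to a local statement near each point, then patch. By Theorem \ref{t:big_one}, the set $S := (a,b)\cap(\spip(A)\setminus\spp(A))$ is discrete in $(a,b)$: around each compact subinterval $[a',b']\subset(a,b)$ the intersection $S\cap[a',b']$ is finite, so $S$ has no accumulation point in $(a,b)$. First I would fix a bounded interval $\Delta\in\mathscr{M}_S(a,b)$ with $\partial\Delta\cap S=\emptyset$ and work on a slightly larger compact interval $[c,d]$ containing $\ol\Delta$ in its interior with $[c,d]\subset(a,b)$ and $[c,d]\cap S$ finite, say $[c,d]\cap S=\{\la_1,\dots,\la_r\}$.

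The core of the argument is a finite-rank reduction via Theorem \ref{t:finite_rank}. For each $\la_j$ we can (after a reduction allowing us to assume $0\notin\sigma_{\rm p}(G)$ on the relevant reducing subspace, or by working directly with the construction in the proof of Theorem \ref{t:finite_rank}) find a $G$-symmetric bounded finite-rank operator $F_j$ supported near $\la_j$ so that $\la_j\in\spp(A+F_j)\cup\r(A+F_j)$; summing these, we get a single $G$-symmetric bounded finite-rank $F$ with $F=F_1+\dots+F_r$ such that $[c,d]\cap\sap(A+F)\subset\spp(A+F)$. By Theorem \ref{t:comp_1}, $(A+F)$ has the same regular points and spectral points of type $\pi_\pm$ as $A$ up to sets of measure zero, and crucially, by Theorem \ref{t:langer}(b) (or by the argument in Theorem \ref{t:resolvente}) we may shrink $\calU$ so that $\calU\setminus\R\subset\rho(A+F)$. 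Now Theorem \ref{t:B} applies to $A+F$: it has a genuine local spectral function $E^F$ on $(a,b)$ defined on all of $\mathscr{M}(a,b)$, with each $E^F(\Delta)$ a $G$-symmetric projection onto a uniformly positive subspace. The heart of the construction is then to transfer $E^F$ back to $A$: since $F$ has finite rank and is supported near the finitely many exceptional points, the maximal spectral subspaces of $A$ and $A+F$ over $\Delta$ differ by a finite-dimensional correction coming from the algebraic eigenspaces $\calL_{\la_j}(A)$ for $\la_j\in\Delta$. Concretely, I would define $E(\Delta)$ to be the Riesz projection / maximal spectral subspace projection of $A$ itself corresponding to $\ol\Delta$, using Theorem \ref{t:resolvente} (the finite-order resolvent bound) together with \cite[Chapter II]{lm} to guarantee that the maximal spectral subspace $\calL_{\ol\Delta}(A)$ exists and that the associated projection is bounded and $G$-selfadjoint.

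With $E(\Delta)$ so defined, the five axioms (S1)--(S5) follow: (S4) and (S5) are immediate from properties I--III of maximal spectral subspaces; (S1) and (S2) follow as in Theorem \ref{t:B} from the disjointness/union behavior of maximal spectral subspaces, using that endpoints avoid $S$ so the resolvent bound \eqref{e:resolvente} is uniform near $\partial\Delta$; and (S3) is proved exactly as in part 2 of the proof of Theorem \ref{t:B}, since that argument only used the resolvent estimate and the properties of maximal spectral subspaces, both of which we now have. For the structural claim, on $E(\Delta)\calH=\calL_{\ol\Delta}(A)$ the operator $A$ restricted there has spectrum in $\ol\Delta$, hence $\sigma(A|E(\Delta)\calH)=\sap(A|E(\Delta)\calH)\subset\spip(A)$, so by Theorem \ref{t:big_one} applied to this restriction (with $\alpha=0$, since the resolvent has finite order and in particular $\calU\setminus\R\subset\rho(A)$ forces $\alpha=0$ via part (d)), $E(\Delta)\calH$ decomposes as $\calN_+[\ds]\calL_1$ with $\calN_+$ uniformly positive of finite codimension and $\calL_1$ finite-dimensional; this is precisely the statement that $E(\Delta)\calH$ is an Almost Pontryagin space with finite rank of non-positivity. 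That $E(\Delta)$ is $G$-selfadjoint (not merely $G$-symmetric) follows because it is a bounded projection whose range and kernel ($\calL_{\ol\Delta}(A)$ and its $[\perp]$-companion, which are $[\perp]$-orthogonal since $A$ is $G$-symmetric and the spectra are separated) are $G$-orthogonal.

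The main obstacle will be the transfer step: showing that the finite-rank perturbation $F$ built near the exceptional points can be arranged so that simultaneously $\calU\setminus\R\subset\rho(A+F)$ holds (this needs the corollary after Theorem \ref{t:finite_rank} or a direct argument) \emph{and} the resulting spectral projection $E^F(\Delta)$, corrected back to $A$, still yields a bounded $G$-selfadjoint projection with Almost-Pontryagin range — i.e.\ controlling how the finite-dimensional degeneracy of $\calL_{\la_j}(A)$ sits inside the maximal spectral subspace. The cleanest route, which I would follow, is to \emph{bypass} the perturbation for defining $E$ and instead define $E(\Delta)$ directly as the maximal spectral subspace projection for $A$, justified by Theorem \ref{t:resolvente}; the perturbation $A+F$ and Theorem \ref{t:B} are then used only as a tool to verify properties (S1)--(S3) on intervals whose closures avoid $S$, where $A$ and $A+F$ have "the same" spectral behavior up to the finite-dimensional pieces that do not affect boundedness or the projection identities.
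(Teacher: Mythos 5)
Your proposal never settles on a viable definition of $E(\Delta)$, and both routes you sketch have a gap at exactly the point where the paper does something specific. The perturbation route has two problems: Theorem \ref{t:finite_rank} requires $0\notin\sigma_{\rm p}(G)$, which is \emph{not} assumed in Theorem \ref{t:sf}, and — more seriously — even if you produce $A+F$ with $[c,d]\cap\sap(A+F)\subset\spp(A+F)$ and obtain $E^F$ via Theorem \ref{t:B}, the projections $E^F(\Delta)$ commute with $A+F$, not with $A$, and there is no evident way to ``correct'' them by a finite-dimensional piece so as to get bounded $G$-selfadjoint projections commuting with $A$. You acknowledge this is the main obstacle but then bypass it.

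The ``direct'' route you then prefer has a more fundamental problem: the Lyubich--Matsaev theorem, fed with the finite-order resolvent bound from Theorem \ref{t:resolvente}, yields the existence of the maximal spectral subspace $\calL_{\ol\Delta}(A)$ as an $A$-invariant subspace, but it does \emph{not} yield a bounded projection onto it. In Theorem \ref{t:B} the projection comes for free because there $\calL_{[a,b]}$ is uniformly positive, so $\calH=\calL_{[a,b]}[\ds]\calL_{[a,b]}^\gperp$ by \eqref{Elgersburg}. Here $\calL_{\ol\Delta}(A)$ is only expected to be an Almost Pontryagin space — possibly degenerate — and for a degenerate subspace $\calL\,[\ds]\,\calL^\gperp\neq\calH$, so the $G$-orthogonal decomposition you invoke to get $G$-selfadjointness does not exist a priori. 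This is precisely what the paper's construction is designed to avoid: it writes $\Delta$ as a union of two endpoint strips $\Delta_1,\Delta_2$ of \emph{positive} type (where Theorem \ref{t:B} gives uniformly positive spectral subspaces $\calL_1,\calL_2$ and hence bounded $G$-orthogonal projections) and a middle piece $[a'',b'']$ that is dealt with on the complement $\wt\calH=(\calL_1[\ds]\calL_2)^\gperp$ by a genuine Riesz--Dunford contour integral, which is bounded by construction since $(a',a'')\cup(b'',b')\subset\rho(\wt A)$. That three-piece splitting is the missing idea.

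A secondary gap: for the Almost-Pontryagin claim you invoke Theorem \ref{t:big_one}(a), but that theorem describes the algebraic eigenspace $\calL_\lambda(A)$ at a single point, not the spectral subspace $E(\Delta)\calH$ over an interval. In the paper this part is a separate and nontrivial argument: one applies Lemma \ref{l:APS} to $\calL_\gamma(A)$ for the exceptional point $\gamma\in S\cap\Delta$, writes $E(\Delta)\calH=\calL_\gamma(A)\ds\calM\ds\calP$, represents $A_\Delta$ in upper-triangular form, and then proves that $\sigma(A_{22})=\spp(A_{22})$ by an explicit computation with Jordan chains (the identity $[x,x]=[y,y]/(\lambda-\gamma)^{2m}>0$) before invoking the uniform positivity result from \cite[Theorem 3.1]{lmm}. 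None of this is replaced by your appeal to Theorem \ref{t:big_one}.
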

\begin{proof}
The proof is divided into three parts. First, we define the spectral function, then we prove that it satisfies (S1)--(S5) and show in the last part that the spectral projections map to Almost Pontryagin spaces.

\
\\
\indent{\bf 1. }Let $\Delta\in\mathscr M_S(a,b)$ be an interval with endpoints $a'$ and $b'$, $a' < b'$. We choose numbers $a'',b''\in (a',b')$, $a'' < b''$, such that $[a',a'']\cup [b'',b']$ has no common point with $S$ (cf.\ Theorem \ref{t:big_one}). Then $\Delta_1 := \Delta\cap[a',a'']$ and $\Delta_2 := \Delta\cap [b'',b']$ are of positive type with respect to $A$. By Theorem \ref{t:B}, $A$ has a spectral function $E_j$ on $\ol{\Delta_j}$, $j=1,2$, such that the spectral subspace
\begin{equation*}\label{e:spaces}
\calL_j := E_j(\Delta_j)\calH, \quad j=1,2,
\end{equation*}
of $A$ is uniformly positive. Moreover, as $\sigma(A|\calL_1\cap\calL_2) = \emptyset$, we have $\calL_1\cap\calL_2 = \{0\}$ and thus, by (S3), $E_1(\Delta_1)E_2(\Delta_2) = 0$ which implies $\calL_1\,\gperp\,\calL_2$. By \cite[Lemma I.5.3]{l}, $\calL_1[\ds]\calL_2$ is uniformly positive, and we have
\begin{equation}\label{e:decomp_easy}
\calH = \calL_1\,[\ds]\,\calL_2\,[\ds]\,\wt\calH,
\end{equation}
where $\wt\calH = (\calL_1[\ds]\calL_2)^\gperp$. Define $\wt A := A|\wt\calH$. Then, since $\sigma(A|\calL_j^\gperp)\subset\ol{\sigma(A)\setminus\Delta_j}$, $j=1,2$, we have due to \eqref{e:decomp_easy}
\begin{equation*}\label{e:hihi}
\sigma(\wt A)\subset\ol{\sigma(A)\setminus\Delta_1}\,\cap\,\ol{\sigma(A)\setminus\Delta_2} \;=\; \ol{\sigma(A)\setminus(\Delta_1\cup\Delta_2)}.
\end{equation*}
This implies $(a',a'')\cup (b'',b')\subset\rho(\wt A)$.
Let $\Gamma$ be a closed curve in $\rho(\wt A)$ which is symmetric with respect to the real axis such that the part of the spectrum of $\wt A$ in the interior of $\Gamma$ coincides with $\sigma(\wt A)\cap [a'',b'']$. The Riesz-Dunford projection
$$
\wt E := -\frac 1 {2\pi i}\,\int_\Gamma\,(\wt A - \lambda)^{-1}\,d\lambda
$$
is then easily seen to be a bounded $\product$-symmetric operator in $\wt\calH$. With respect to the decomposition \eqref{e:decomp_easy} of $\calH$ we now define
\begin{equation}\label{e:def_E}
E(\Delta) := I_{\calL_1} \,[\ds]\, I_{\calL_2} \,[\ds]\, \wt E.
\end{equation}
This is obviously a $G$-selfadjoint projection in $\calH$ which commutes with $A$.

The definition of $E(\Delta)$ in \eqref{e:def_E} depends on the choice of $a''$ and $b''$. However, it is not difficult to show that a different value for $a''$ leads to the same operator in \eqref{e:def_E}. The same then holds for a different $b''$ which proves that the definition \eqref{e:def_E} is in fact independent on the choice of $a''$ and $b''$.

For arbitrary $\Delta\in\mathscr{M}_S(a,b)$ we define $E(\Delta) := E(\delta_1) + \ldots + E(\delta_n)$, where the $\delta_j$ are the connected components of $\Delta$.

\
\\
\indent{\bf 2.} Let us prove that the set function $E$, defined in the first part of this proof, is in fact a local spectral function for $A$ on $(a,b)$. Let $\Delta\in\mathscr M_S(a,b)$ be an interval. Then it is evident that (S3) holds for $\Delta$, and we have (using the notation from part 1)
\begin{align*}
\sigma(A|E(\Delta)\calH)
&= \sigma(A|\calL_1)\cup\sigma(A|\calL_2)\cup\sigma(\wt A|\wt E\calH)\\
&\subset \ol{\sigma(A)\cap(\Delta_1\cup\Delta_2)}\;\cup\,\left(\sigma(A)\cap [a'',b'']\right)\\
&= \ol{\sigma(A)\cap(\Delta_1\cup\Delta_2\cup [a'',b''])},
\end{align*}
which is (S4). The property (S5) for $\Delta$ is proved similarly. Now, let us show that for two intervals $\Delta_1,\Delta_2\in\mathscr M_S(a,b)$ we have
\begin{equation}\label{e:spezial}
\Delta_1\cap\Delta_2 = \emptyset\quad\Lra\quad E(\Delta_1)E(\Delta_2) = 0.
\end{equation}
Indeed, if $\ol{\Delta_1}\cap\ol{\Delta_2} = \emptyset$, then, as $E(\Delta_1)E(\Delta_2)$ maps onto $E(\Delta_1)\calH\cap E(\Delta_2)\calH$, we have $\sigma(A|E(\Delta_1)E(\Delta_2)\calH)\subset\ol{\Delta_1}\cap\ol{\Delta_2} = \emptyset$, and thus $E(\Delta_1)E(\Delta_2) = 0$. It remains to consider the case that $\Delta_1$ and $\Delta_2$ have a common endpoint $\alpha$. But then, a real neighborhood of $\alpha$ must be of positive type, and the assertion follows from Theorem \ref{t:B}.

Due to \eqref{e:spezial} it suffices to prove (S1)--(S5) only for intervals $\Delta,\Delta_j\in\mathscr M_S(a,b)$, and hence it remains to prove (S1) and (S2) for intervals. But (S1) follows from \eqref{e:spezial} and (S2), so that only the proof of (S2) is left. For this, let $\Delta_j\in\mathscr{M}_S(a,b)$, $j\in\N$, be mutually disjoint  intervals such that $\Delta := \bigcup_{j=1}^\infty\,\Delta_j$ is also an element of $\mathscr{M}_S(a,b)$. Due to the definition of $E(\Delta)$ via connected components and the finiteness of $\Delta\cap S$, it is no restriction to assume that each $\Delta_j$ is an interval with $\Delta_j\cap S = \emptyset$. Hence, also $\Delta\cap S = \emptyset$. Therefore, the subspace $\hat\calH := E(\Delta)\calH$ is uniformly positive and the operator $\hat A := A|\hat\calH$ is a bounded selfadjoint operator in the Hilbert space $(\hat\calH,\product)$ with $\sigma(\hat A)\subset\ol\Delta$. Now, the assertion follows from the fact that the restriction of $E(\Delta_j)$ to $\hat\calH$ coincides with $\hat E(\Delta_j)$, where $\hat E$ is the usual spectral measure of $\hat A$ in $\hat\calH$.

\
\\
\indent{\bf 3.} In this step we will show that $E(\Delta)\calH$ is an Almost Pontryagin space with finite rank of non-positivity. It is sufficient to show this for a compact interval $\Delta\in\mathscr{M}_S(a,b)$ such that $S\cap\Delta$ consists only of one point $\gamma$. Let $\calL_{\gamma}(A) = \calL_+\,[\ds]\,\calL_-\,[\ds]\,\calL_0$ be a fundamental decomposition of the algebraic eigenspace $\calL_{\gamma}(A)$ which is by Theorem \ref{t:big_one} an Almost Pontryagin space with finite rank of non-positivity. By Lemma \ref{l:APS}, we find subspaces $\calP,\calM\subset E(\Delta)\calH$ with $\dim\,\calP < \infty$ such that
$$
E(\Delta)\calH= \calL_+\,[\ds]\,\calL_-\,[\ds]\,\calL_{00}\,[\ds]\,(\calL_{01}\,\ds\,\calP)\,[\ds]\,\calM,
$$
where
$$
\calL_{00} = \calL_{S\cap\Delta}(A)\cap\calH^\iso,\quad\calL_{\gamma}(A)^\iso = \calL_{00}\,\ds\,\calL_{01},
$$
and $\calL_{\gamma}(A)^\gperp = \calL_{\gamma}(A)^\iso\,[\ds]\,\calM$. Hence, with respect to the decomposition
$$
E(\Delta)\calH = \calL_{\gamma}(A) \,\ds\, \calM \,\ds\, \calP
$$
the operator $A_\Delta := A|E(\Delta)\calH$ admits the following representation:
$$
A_\Delta = \begin{pmatrix}A_{11} & A_{12} & A_{13}\\ 0 & A_{22} & A_{23}\\ 0 & 0 & A_{33}\end{pmatrix}.
$$
Obviously, the operator $A_{22}$ is  $\product$-symmetric in the subspace $\calM$.
Let us show that $\sigma(A_{22}) = \spp(A_{22})$. Then from \cite[Theorem 3.1]{lmm}
it follows that $\calM$ is uniformly positive and thus that $E(\Delta)\calH$ is an
Almost Pontryagin space with finite rank of non-positivity. We have
$\sigma(A_\Delta) = \spip(A_\Delta)$, and since the operator $A_{12}$ maps into the
 finite-dimensional subspace $\calL_{\gamma}(A)^\iso$, it follows from Theorem
\ref{t:closed subspace} that $\sigma(A_{22}) = \spip(A_{22})$. For
$\sigma(A_{22}) = \spp(A_{22})$ it suffices to show that $\ker(A_{22} - \lambda)$
is positive for all $\lambda\in\sigma(A_{22})$; cf.\ Corollary \ref{c:spp_in_spip}.
 Hence, let $\lambda\in\sigma(A_{22})$, and let $x\in\dom A_{22}\subset\calM$ with
$(A_{22} - \lambda)x = 0$. If $\lambda = \gamma$ we have
$(A_\Delta - \gamma)x = A_{12}x\in\calL_{\gamma}(A)$ and hence $(A - \gamma)^{k}x = 0$
 for some $k\in\N$. But this implies $x\in\calL_{\gamma}(A)$ and therefore $x = 0$.
  Let $\lambda\ne \gamma$. There is $m\in\N$ such that
  $(A - \gamma)^{m}\calL_{\gamma}(A) = \{0\}$. Set
$$
y := (A - \gamma)^{m}x.
$$
Then, since $(A_\Delta - \lambda)x = A_{12}x\in\calL_{\gamma}(A)$, we have $(A_\Delta - \lambda)y = 0$ and hence either $y = 0$ (which implies $x = 0$) or $[y,y] > 0$ as $\lambda\in\spp(A_\Delta)$. Suppose $y\neq 0$. Then we have $(A - \gamma)x = A_{12}x + (\lambda - \gamma)x$, and, by induction,
$$
(A - \gamma)^{2m}\,x = \ell + (\lambda - \gamma)^{2m}\,x
$$
with some $\ell\in\calL_{\gamma}(A)$. Finally, we obtain
\begin{align*}
[x,x]
&= \frac{\left[ (A - \gamma)^{2m}\,x - \ell\,,x \right]}{(\lambda - \gamma)^{2m}}
 = \frac{\left[(A - \gamma)^{2m}\,x\,,x \right]}{(\lambda - \gamma)^{2m}} =
  \frac{[y,y]}{(\lambda - \gamma)^{2m}} > 0,
\end{align*}
and the theorem is proved.
\end{proof}

\vspace*{1cm}
\section*{Contact information}

\vspace*{.3cm}
\noindent
{\bf Friedrich Philipp}

Institut f\"ur  Mathematik,  Technische Universit\"{a}t Berlin

Stra\ss e des 17.\ Juni 136, D-10623 Berlin, Germany

http://www.tu-berlin.de/?fmphilipp

philipp@math.tu-berlin.de

\ \\
\noindent
{\bf Carsten Trunk }

Institut f\"ur  Mathematik,  Technische Universit\"{a}t Ilmenau

Postfach 100565, D-98684 Ilmenau, Germany

http://www.tu-ilmenau.de/analysis/team/carsten-trunk

carsten.trunk@tu-ilmenau.de


\begin{thebibliography}{99}
\bibitem{abjt} T.Ya.\ Azizov, J.\ Behrndt, P.\ Jonas, and C.\ Trunk,
               Spectral points of definite type and type $\pi$ for linear operators and relations in Krein spaces,
               J.\ London Math.\ Soc.\ {\bf 83} (2011), 768--788.

\bibitem{adp}    T.Ya.\ Azizov, M.\ Denisov, and F.\ Philipp,
                Spectral functions of products of selfadjoint operators,
               Math.\ Nachr.\ {\bf 285} (2012), 1711--1728.

\bibitem{ai} T.Ya.\ Azizov and I.S.\ Iokhvidov,
             Linear Operators in Spaces with an Indefinite Metric,
             John Wiley \& Sons 1989.

\bibitem{ajt} T.Ya.\ Azizov, P.\ Jonas, and C.\ Trunk,
              Spectral points of type $\pi_{+}$ and $\pi_{-}$ of self-adjoint operators in Krein spaces,
              J.\ Funct.\ Anal.\ {\bf 226} (2005), 114--137.

\bibitem{BP}  J.\ Behrndt and F.\ Philipp,
Spectral analysis of singular ordinary differential operators
with indefinite weights,
 J.\ Differential Equations \textbf{248} (2010), 2015--2037.


\bibitem{bpt} J.\ Behrndt, F.\ Philipp, and C.\ Trunk,
              Properties of the spectrum of type $\pi_+$ and type $\pi_-$ of self-adjoint operators in Krein spaces,
              Methods Funct.\ Anal.\ Topology {\bf 12} (2006), 326--340.

\bibitem{BBJ1} C.M.\ Bender, D.C.\ Brody, and H.F.\ Jones,
Complex extension of Quantum Mechanics,
Phys.\ Rev.\ Lett.\ {\bf 89} (2002), 270401.

\bibitem{BBJ2}  C.M.\ Bender, D.C.\ Brody, and H.F.\ Jones,
Must a Hamiltonian be Hermitian?,
Am.\ J.\ Phys.\ {\bf 71} (2003), 1095.

\bibitem{b} J.\ Bognar,
            Indefinite Inner Product Spaces,
            Springer, 1974.

\bibitem{CGS05} E.\ Caliceti, S.\ Graffi, and  J.\ Sj\"{o}strand,
Spectra of $\mathcal P \mathcal T$-symmetric operators and perturbation theory,
 J.\ Phys.\ A: Math.\ Gen.\ {\bf 38} (2005), 185-193.


\bibitem{cf} I.\ Colojoar\u{a} and C.\ Foia\c{s},
             Theory of Generalized Spectral Operators,
             Gordon and Breach, Science Publishers, Inc., 1968.

\bibitem{dds} A.\ Dijksma and H.S.V.\ de Snoo,
              Symmetric and Selfadjoint Relations in Krein Spaces~I,
              Oper.\ Theory: Adv.\ Appl.\ {\bf 24} (1987), 145--166.


\bibitem{GSZ} U.\ G\"{u}nther, F.\ Stefani, and M.\ Znojil,
MHD $\alpha^{2}$-dynamo, squire equation and $\mathcal P \mathcal T$-symmetric
interpolation between square well and harmonic oscillator,
 J.\ Math.\ Phys.\ {\bf 46} (2005), 063504.

\bibitem{JT}  B.\ Jacob and C.\ Trunk,
 Location of the spectrum of
operator matrices which are associated to second order equations,
Operators and Matrices  {\bf 1} (2007), 45-60.

 \bibitem{JTW}  B.\ Jacob, C.\ Trunk, and M.\ Winklmeier
  Analyticity and
Riesz basis property of semigroups associated to damped vibrations,
Journal of Evolution Equations {\bf 8} (2008), 263--281.



\bibitem{jl} P.\ Jonas and H.\ Langer,
             Compact perturbations of definitizable operators,
             J.\ Operator Theory {\bf 2} (1979), 63--77.

\bibitem{k} T.\ Kato,
            Perturbation Theory for Linear Operators,
            Second Edition, Spring\-er, 1976.

\bibitem{kww} M.\ Kaltenb\"ack, H.\ Winkler, and H.\ Woracek,
              Almost Pontryagin spaces,
              Oper.\ Theory Adv.\ Appl.\ \textbf{160} (2005), 253--271.

\bibitem{ko} G.\ K\"{o}the,
            Topological Vector Spaces I, Springer 1969.

\bibitem{l} H.\ Langer,
            Spectral functions of definitizable operators in Krein spaces,
            in: Functional Analysis: Proceedings of a Conference Held at Dubrovnik, Yugoslavia, November 2--14, 1981;
            Lecture Notes in Mathematics {\bf 948} (1982), 1--46, Springer.

\bibitem{lmm} H.\ Langer, A.\ Markus, and V.\ Matsaev,
              Locally definite operators in indefinite inner product spaces,
              Math.\ Ann.\ \textbf{308} (1997), 405--424.

\bibitem{LT04} H.\ Langer  and C.\ Tretter,
A Krein space approach to ${\mathcal P}{\mathcal T}$-symmetry,
 Czech.\ J.\ Phys.\ {\bf 54} (2004), 1113--1120.


\bibitem{lm} I.\ Lyubich and V.I.\ Matsaev,
             On operators with decomposable spectrum,
             Mat.\ Sbornik \textbf{56} (98) (1962), 433--468 (Russian).
             Engl. transl.: AMS Transl. (2) \textbf{47} (1965), 89--129.

\bibitem{m}  V.\ M\"uller,
             Spectral Theory of Linear Operators and Spectral Systems in Banach Algebras,
             2nd edition, Birkh\"auser 2007.

\bibitem{p}  F.\ Philipp,
             Indefinite Sturm-Liouville operators with periodic coefficients,
             Oper.\ Matrices {\bf 7} (2013), 777--811.

\bibitem{PST}
F.\ Philipp, V.\ Strauss, and C.\ Trunk,
Local spectral theory for normal operators in Krein spaces,
Math.\ Nachr.\ \textbf{286} (2013), 42--58.

\bibitem{pt} F.\ Philipp and C.\ Trunk,
             G-selfadjoint operators in Almost Pontryagin spaces,
             Oper.\ Theory: Adv.\ Appl. {\bf 188} (2008), 207--235.
\end{thebibliography}
\end{document}